\global\let\AddToReset=\@addtoreset}
\newtheorem{lemma}{\bf Lemma}[section]
\newtheorem{property}{Property}[section]
\newtheorem{@definition}{\sc Definition}[section]
\newtheorem{@remark}{\sc Remark}[section]
\newtheorem{@example}{\sc Example}[section]
\newcommand{\beqn}{\begin{displaymath}}
\newcommand{\eeqn}{\end{displaymath}}
\newcommand{\beq}{\begin{equation}}  
\newcommand{\eeq}{\end{equation}}
\def\mathsf{\bf}
\def\N{\mathbb{N}}
\def\R{\mathbb{R}}
\def\Z{\mathbb{Z}}
\def\E{\mathrm E}
\def\text{\mbox}
\def\1{{\bf 1}}
\newcommand{\Cov}{\mbox{Cov}}
\newcommand{\Loi}{\mathcal{L}}
\def\limiteloiN{\renewcommand{\arraystretch}{0.5}
\begin{array}[t]{c}
\stackrel{{\Loi}}{\longrightarrow} \\
{\scriptstyle N\rightarrow\infty}
\end{array}\renewcommand{\arraystretch}{1}}
\def\limiteloiNm{\renewcommand{\arraystretch}{0.5}
\begin{array}[t]{c}
\stackrel{{\Loi}}{\longrightarrow} \\
{\scriptstyle [N/m]\wedge m \rightarrow\infty}
\end{array}\renewcommand{\arraystretch}{1}}
\def\limiteprobaN{\renewcommand{\arraystretch}{0.5}
\begin{array}[t]{c}
\stackrel{{\cal P}}{\longrightarrow} \\
{\scriptstyle N \rightarrow\infty}
\end{array}\renewcommand{\arraystretch}{1}}
\def\limiteN{\renewcommand{\arraystretch}{0.5}
\begin{array}[t]{c}
\stackrel{}{\longrightarrow} \\
{\scriptstyle N\rightarrow\infty}
\end{array}\renewcommand{\arraystretch}{1}}
\def\limitet{\renewcommand{\arraystretch}{0.5}
\begin{array}[t]{c}
\stackrel{}{\longrightarrow} \\
{\scriptstyle t\rightarrow\infty}
\end{array}\renewcommand{\arraystretch}{1}}
\def\limitet0{\renewcommand{\arraystretch}{0.5}
\begin{array}[t]{c}
\stackrel{}{\longrightarrow} \\
{\scriptstyle t\rightarrow 0}
\end{array}\renewcommand{\arraystretch}{1}}
\newtheorem{thm}{Theorem}
\newtheorem{rem}{Remark}
\newtheorem{prop}{Proposition}
\newtheorem{popy}{Property}
\def\Cov{\mathrm{Cov}}
\begin{document}

\title{\bf Semiparametric stationarity tests based on  adaptive multidimensional increment ratio statistics}

\author{\centerline{Jean-Marc Bardet and Béchir Dola} \\
\small {\tt bardet@univ-paris1.fr},~~ \small {\tt bechir.dola@malix.univ-paris1.fr}\\
~\\
SAMM, Université Panthéon-Sorbonne (Paris I), 90 rue de Tolbiac, 75013 Paris, FRANCE}

\maketitle
\begin{abstract}
In this paper, we show that the  adaptive multidimensional increment ratio estimator of the long range memory parameter defined in Bardet and Dola (2012) satisfies a central limit theorem (CLT in the sequel) for a large semiparametric class of Gaussian fractionally integrated  processes with memory parameter $d \in (-0.5,1.25)$. Since the asymptotic variance of this CLT can be computed, tests of stationarity or nonstationarity distinguishing the assumptions $d<0.5$ and $d \geq 0.5$ are constructed. These tests are also consistent tests of unit root. Simulations done on a large benchmark of short memory, long memory and non stationary processes show the accuracy of the tests with respect to other usual stationarity or nonstationarity tests (LMC, V/S, ADF and PP tests). Finally, the estimator and tests are applied to log-returns of famous economic data and to their absolute value power laws.
\end{abstract}
\begin{quote}
{\em Keywords:} {\small  Gaussian fractionally integrated processes; Adaptive semiparametric estimators of the memeory parameter; test of long-memory; stationarity test; unit root test.}
\end{quote}

\vskip1cm

\section{Introduction}

Consider the set $I(d)$ of fractionally integrated time series $X=(X_k)_{k\in \Z}$ for $-0.5<d<1.5$ by:\\
~\\
{\bf Assumption $I(d)$:} {\em $X=(X_t)_{t\in \Z}$  is a time series if there exists a continuous function $f^*:[-\pi,\pi] \to [0,\infty[$ satisfying:}
\begin{enumerate}
\item  {\em  if $-0.5<d<0.5$,  $X$ is a stationary process having a spectral density $f$ satisfying}
\begin{eqnarray}\label{IdS}
f(\lambda) = |\lambda|^{-2d}f^*(\lambda)\quad\mbox{for all $\lambda \in (-\pi,0)\cup (0,\pi)$, with $f^*(0)>0$}.
\end{eqnarray}
\item  {\em  if $0.5\leq d<1.5$, $U=(U_t)_{t\in \Z}=X_{t}-X_{t-1}$ is a stationary process having a spectral density $f$ satisfying}
\begin{eqnarray}\label{IdAS}
f(\lambda) =  |\lambda|^{2-2d}f^*(\lambda)\quad\mbox{for all $\lambda \in (-\pi,0)\cup (0,\pi)$, with $f^*(0)>0$}.
\end{eqnarray}
\end{enumerate}

The case $d\in (0,0.5)$ is the case of long-memory processes, while short-memory processes are considered when $-0.5< d\leq 0$ and nonstationary processes when $d\geq 0.5$. ARFIMA$(p,d,q)$ processes (which are linear processes) or fractional Gaussian noises (with parameter $H=d+1/2\in (0,1	)$) are famous examples of processes satisfying Assumption $I(d)$. The purpose of this paper is twofold: firstly, we establish the consistency of an adaptive semiparametric estimator of $d$ for any $d\in (-0.5,1.25)$. Secondly, we use this estimator for building new semiparametric stationary tests.  \\
Numerous articles have been devoted to estimate $d$ in the case $d\in (-0.5,0.5)$. The books of Beran (1994) or  Doukhan {\it et al.} (2003) provide large surveys of such parametric (mainly maximum likelihood or Whittle estimators) or semiparametric estimators (mainly local Whittle, log-periodogram or wavelet based estimators). Here we will restrict our discussion to the case of semiparametric estimators that are best suited to address the general case of processes satisfying Assumption $I(d)$. Even if first versions of local Whittle, log-periodogramm and wavelet based estimators (see for instance Robinson, 1995a and 1995b, Abry and Veitch, 1998) are only considered in the case $d<0.5$, new extensions have been provided for also estimating $d$ when $d\geq 0.5$ (see for instance Hurvich and Ray, 1995, Velasco, 1999a,  Velasco and Robinson, 2000,
Moulines and Soulier, 2003,  Shimotsu and Phillips, 2005, Giraitis {\it et al.}, 2003, 2006, Abadir {\it et al.}, 2007 or Moulines {\it et al.}, 2007). Moreover, adaptive versions of these estimators have also been defined for avoiding any trimming or bandwidth parameters generally required by these methods (see for instance Giraitis {\it et al.}, 2000, Moulines and Soulier, 2003, or Veitch {\it et al.}, 2003, or Bardet {\it et al.}, 2008). However there still no exists an adaptive estimator of $d$ satisfying a central limit theorem (for providing confidence intervals or tests) and valid for $d<0.5$ but also for $d\geq 0.5$. This is the first objective of this paper and it will be achieved using multidimensional Increment Ratio (IR) statistics. \\
Indeed, Surgailis {\it et al.} (2008) first defined the statistic $IR_N$ (see its definition in \eqref{defIR}) from an observed trajectory $(X_1,\ldots,X_N)$. Its asymptotic behavior is studied and a central limit theorem (CLT in the sequel) is established for $d \in (-0.5,0.5)\cup (0.5,1.25)$ inducing a CLT. Therefore, the estimator $\widehat d_N=\Lambda_0^{-1}(IR_N)$, where $d \mapsto \Lambda_0(d)$ is  a smooth and increasing function, is a consistent estimator of $d$ satisfying also a CLT (see more details below). However this new estimator was not totally satisfying: firstly, it requires the knowledge of the second order behavior of the spectral density that is clearly unknown in practice. Secondly, its numerical accuracy is interesting but clearly less than the one of local Whittle or log-periodogram estimators. As a consequence, in Bardet and Dola (2012), we built an adaptive multidimensional $IR$ estimator $\widetilde d_N^{IR}$ (see its definition in \eqref{dtilde}) answering to both these points 
but only for
$-0.5<d<0.5$. This is an adaptive semiparametric estimator of $d$ and its numerical performances are often better than the ones of local Whittle or log-periodogram estimators.  \\
Here we extend this preliminary work to the case $0.5\leq d <1.25$. Hence we obtain a CLT satisfied by $\widetilde d_N^{IR}$ for all $d\in (-0.5,1.25)$ with an explicit asymptotic variance depending only on $d$ and this notably allows to obtain confidence intervals. The case $d=0.5$ is now studied and this offers new interesting perspectives: our adaptive estimator can be used for building a stationarity (or nonstationarity) test since $0.5$ is the ``border number'' between stationarity and nonstationarity.  ~\\
\\
There exist several famous stationarity (or nonstationarity) tests. For stationarity tests we may cite the KPSS (Kwiotowski, Phillips, Schmidt, Shin) test (see for instance Hamilton, 1994, p. 514) and LMC test (see Leybourne and  McCabe, 2000). For nonstationarity tests we may cite the Augmented Dickey-Fuller test (ADF test in the sequel, see Hamilton, 1994, p. 516-528) and the Philipps and Perron test (PP test in the sequel, see for instance Elder, 2001, p. 137-146). All these tests are unit root tests, {\it i.e.} and roughly speaking, semiparametric tests based on the model $X_t=\rho\, X_{t-1} + \varepsilon_t$ with $|\rho|\leq 1$. A test about $d=0.5$ for a process satisfying Assumption $I(d)$ is therefore a refinement of a basic unit root test since the case $\rho=1$ is a particular case of $I(1)$ and the case $|\rho|<1$ a particular case of $I(0)$. Thus, a stationarity (or nonstationarity test) based on the
estimator of $d$ provides  a more sensible test than usual unit root tests. \\
This principle of stationarity test linked to $d$ was also already investigated in many articles. We can notably cite Robinson (1994), Tanaka (1999), Ling and Li (2001), Ling (2003) or Nielsen (2004). However,  all these papers provide parametric tests, with a specified model (for instance ARFIMA or ARFIMA-GARCH processes). More recently, several papers have been devoted to the construction of semi-parametric tests, see for in instance Giraitis {\it et al.} (2006), Abadir {\it et al.} (2007) or Surgailis {\it et al.} (2006).  \\
Here we slightly restrict the general class $I(d)$ to the Gaussian semiparametric class $IG(d,\beta)$ defined below (see the beginning of Section \ref{MIR}). For processes belonging to this class, we construct a new  stationarity test $\widetilde S_N$ which accepts the stationarity assumption when $\widetilde d_N^{IR}\leq 0.5+s$ with $s$ a threshold depending on the type I error test and $N$, while the new nonstationarity test $\widetilde T_N$ accepts the nonstationarity assumption when $\widetilde d_N^{IR}\geq 0.5-s$. Note that $\widetilde d_N^{IR}\leq s'$ also provides a test for deciding between short and long range dependency, as this is done by the V/S test (see details in Giraitis {\it et al.}, 2003)\\
~\\
In Section \ref{simu}, numerous simulations are realized on several models of time series (short and long memory processes). \\
First, the new multidimensional IR estimator  $\widetilde d_N^{IR}$ is compared to the most efficient and famous semiparametric estimators for $d \in [-0.4,1.2]$; the performances of $\widetilde d_N^{IR}$  are convincing  and equivalent to close to other adaptive estimators (except for extended local Whittle estimator defined in Abadir {\it et al.}, 2007, which provides the best results but is not an adative estimator). \\
Secondly, the new stationarity $\widetilde S_N$ and nonstationarity $\widetilde T_N$ tests are compared on the same benchmark of processes to the most famous unit root tests (LMC, V/S, ADF and PP tests). And the results are quite surprising: even on AR$[1]$ or ARIMA$[1,1,0]$ processes, multidimensional IR $\widetilde S_N$ and $\widetilde T_N$ tests provide convincing results as well as tests built from the extended local Whittle estimator. Note however that ADF and PP tests provide results slightly better than these tests for these processes. For long-memory processes (such as ARFIMA processes), the results are clear: $\widetilde S_N$ and $\widetilde T_N$ tests are efficient tests of (non)stationarity while LMC, ADF and PP tests are not relevant at all. \\
Finally, we studied the stationarity and long range dependency properties of Econometric data. We chose to apply estimators and tests to the log-returns of daily closing value of $5$ classical Stocks and Exchange Rate Markets. After cutting the series in $3$ stages using an algorithm of change detection, we found again this well known result: the log-returns are stationary and short memory processes while absolute values or powers of absolute values of log-returns are generally stationary and long memory processes. Classical stationarity or nonstationarity tests are not able to lead to such conclusions. We also remarked that these time series during the ``last'' (and third)  stages (after 1997 for almost all) are generally closer to nonstationary processes than during the previous stages with a long memory parameter close to $0.5$.  \\
~\\
The forthcoming Section~\ref{MIR} is devoted to the definition and asymptotic behavior of the adaptive multidimensional IR estimator of $d$. The stationarity and nonstationarity tests are presented in Section \ref{test} while Section \ref{simu} provides the results of simulations and application on econometric data. Finally Section \ref{proofs} contains the proofs of main results.

\section{The multidimensional increment ratio statistic}\label{MIR}
In this paper we consider a semiparametric class $IG(d,\beta)$: for $0\leq d<1.5$ and $\beta>0$ define: \\
~\\
{\bf Assumption $IG(d,\beta)$:~} {\em $X=(X_t)_{t\in \Z}$  is a Gaussian time series such that there exist $\epsilon >
0$, $c_0>0$, $c'_0>0$  and $c_1 \in \R$ satisfying:}
\begin{enumerate}
\item  {\em  if $d<0.5$,  $X$ is a stationary process having a spectral density $f$ satisfying for all $\lambda \in (-\pi,0)\cup (0,\pi)$}
\begin{eqnarray}\label{AssumptionS}
f(\lambda) =  c_{0}|\lambda|^{-2d}+c_{1}|\lambda|^{-2d+\beta} +O\big(|\lambda|^{-2d+\beta+\epsilon}\big) \quad \mbox{and}\quad |f'(\lambda)|\leq  c'_0 \, \lambda^{-2d-1}.
\end{eqnarray}
\item  {\em  if $0.5\leq d<1.5$, $U=(U_t)_{t\in \Z}=X_{t}-X_{t-1}$ is a stationary process having a spectral density $f$ satisfying for all $\lambda \in (-\pi,0)\cup (0,\pi)$}
\begin{eqnarray}\label{AssumptionAS}
f(\lambda) =  c_{0}|\lambda|^{2-2d}+c_{1}|\lambda|^{2-2d+\beta} +O\big(|\lambda|^{2-2d+\beta+\epsilon}\big) \quad \mbox{and}\quad |f'(\lambda)|\leq  c'_0 \, \lambda^{-2d+1}.
\end{eqnarray}
\end{enumerate}
Note that  Assumption $IG(d,\beta)$ is a particular (but still general) case of the more usual set $I(d)$ of fractionally integrated processes defined above.
\begin{rem}\label{lin}
We considered here only Gaussian processes. In Surgailis {\it et al.} (2008) and Bardet and Dola (2012), simulations exhibited that the obtained limit theorems should be also valid for linear processes. However a theoretical proof of such result would require limit theorems for functionals of multidimensional linear processes difficult to be established.
\end{rem}
~\\
In this section, under Assumption $IG(d,\beta)$, we establish central limit theorems which extend to the case $d\in [0.5,1.25)$ those already obtained in Bardet and Dola (2012) for $d\in (-0.5,0.5)$. \\
Let $X=(X_k)_{k\in \N}$ be a process satisfying Assumption $IG(d,\beta)$ and $(X_1,\cdots,X_N)$ be a path of $X$. For any $\ell \in \N^*$ define
\begin{eqnarray}\label{defIR}
IR_N(\ell):=\frac{1}{N-3\ell} \,  \sum_{k=0}^{N-3\ell-1}\frac{\displaystyle \Big |(\sum_{t=k+1}^{k+\ell}X_{t+\ell}-\sum_{t=k+1}^{k+\ell}X_{t})+(\sum_{t=k+\ell+1}^{k+2\ell}X_{t+\ell}-\sum_{t=k+\ell+1}^{k+2\ell}X_{t})\Big |}{\displaystyle \Big |(\sum_{t=k+1}^{k+\ell}X_{t+\ell}-\sum_{t=k+1}^{k+\ell} X_{t})\Big|+\Big|(\sum_{t=k+\ell+1}^{k+2\ell} X_{t+\ell}-\sum_{t=k+\ell+1}^{k+2\ell}X_{t})\Big|}.
\end{eqnarray}
The statistic $IR_N$ was first defined in Surgailis {\it et al.} (2008) as a way to estimate the memory parameter. In Bardet and Surgailis (2011) a simple version of IR-statistic was also introduced to measure the roughness of continuous time processes. The main interest of such a statistic is to be very robust to additional or multiplicative trends.  \\
As in Bardet and Dola (2012), let $m_{j}=j\, m,~j=1,\cdots,p$ with $p \in \N^*$ and $m\in \N^*$, and define the random vector $(IR_{N}(m_j))_{1\leq j\leq p}$.
In the sequel we naturally extend the results obtained for $m\in \N^*$ to $m\in (0,\infty)$ by the convention: $(IR_{N}(j \, m))_{1\leq j\leq p}=(IR_{N}(j \, [m]))_{1\leq j\leq p}$ (which changes nothing to the asymptotic results). \\
  \\
For $H\in (0,1)$, let $B_H=(B_H(t))_{t\in \R}$ be a standard fractional Brownian motion, {\it i.e.} a centered Gaussian process having stationary increments and such as $\Cov\big (B_H(t)\,,\, B_H(s)\big )=\frac 1 2\, \big (|t|^{2H} +|s|^{2H} -|t-s|^{2H}  \big )$. Now, using obvious modifications of Surgailis {\it et al.} (2008), for $d \in (-0.5,1.25)$ and $p\in \N^*$, define the stationary multidimensional centered Gaussian processes $\big (Z_d^{(1)}(\tau),\cdots, Z_d^{(p)}(\tau)\big)$ such as for $\tau \in \R$,
\begin{eqnarray}\label{DefZ2}
Z_{d}^{(j)}(\tau):=\left \{ \begin{array}{ll}
\displaystyle \frac{\sqrt{2d(2d+1)}} {\sqrt{|4^{d+0.5}-4|}} \,  \int_{0}^{1}\big ( B_{d-0.5}(\tau+s+j)-B_{d-0.5}(\tau+s)\big )ds&\mbox{if $d\in (0.5,1.25)$} \\
\displaystyle \frac{1} {\sqrt{|4^{d+0.5}-4|}} \,  \big ( B_{d+0.5}(\tau+2j)-2\, B_{d+0.5}(\tau+j)+B_{d+0.5}(\tau)\big )&\mbox{if $d\in (-0.5,05)$}
\end{array} \right .
\end{eqnarray}
and by continuous extension when $d\to 0.5$:
\begin{eqnarray*}
\Cov \big ( Z_{0.5}^{(i)}(0), Z_{0.5}^{(j)}(\tau)\big ):=\frac 1 {4 \, \log 2 } \, \big ( -h(\tau+i-j)+h(\tau+i)+h(\tau-j)-h(\tau)\big )\quad \mbox{for $\tau \in \R$},
\end{eqnarray*}
with $h(x)=\frac 1 2 \big (|x-1|^2\log|x-1|+|x+1|^2\log|x+1|-2|x|^2\log|x|\big )$ for $x\in \R$, using the convention $0 \times \log 0=0$. Now, we establish a multidimensional central limit theorem satisfied by $(IR_{N}(j \, m))_{1\leq j\leq p}$ for all $d\in (-0.5,1.25)$:
\begin{prop}\label{MCLT}
Assume that Assumption $IG(d,\beta)$ holds with $-0.5\leq d<1.25$ and $\beta>0$. Then
\begin{eqnarray}\label{TLC1}
\sqrt{\frac{N}{m}}\Big (IR_N(j \, m)-\E \big [IR_N(j \, m)\big ]\Big )_{1\leq j \leq p}\limiteloiNm {\cal N}(0, \Gamma_p(d))
\end{eqnarray}
with $\Gamma_p(d)=(\sigma_{i,j}(d))_{1\leq i,j\leq p}$ where for $i,j\in \{1,\ldots,p\}$,
\begin{eqnarray}\label{ssigma}
\sigma_{i,j}(d):&=&\int_{-\infty}^{\infty}\Cov \Big (\frac{|Z^{(i)}_{d}(0)+Z^{(i)}_{d}(i)|}{|Z^{(i)}_{d}(0)|+|Z^{(i)}_{d}(i)|} \frac{|Z^{(j)}_{d}(\tau)+Z^{(j)}_{d}(\tau+j)|}{|Z^{(j)}_{d}(\tau)|+|Z^{(j)}_{d}(\tau+j)|}\Big )d\tau.
\end{eqnarray}
\end{prop}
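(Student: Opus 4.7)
The plan is to reduce the statement to a multivariate CLT for averages of a bounded nonlinear function of a stationary Gaussian vector sequence, via an Arcones--Breuer--Major theorem. Setting $\psi(x,y)=|x+y|/(|x|+|y|)$ and $W_k^{(j,i)}=\sum_{t=k+(i-1)jm+1}^{k+ijm}(X_{t+jm}-X_t)$ for $i\in\{1,2\}$, one has
\begin{eqnarray*}
IR_N(jm)=\frac{1}{N-3jm}\sum_{k=0}^{N-3jm-1}\psi\bigl(W_k^{(j,1)},\,W_k^{(j,2)}\bigr).
\end{eqnarray*}
For $d\in(-0.5,0.5)$ the sequence $(W_k^{(j,i)})_k$ is directly Gaussian stationary (case already handled in Bardet and Dola (2012)); for $d\in[0.5,1.25)$ each $W_k^{(j,i)}$ is a finite linear combination of the stationary increments $U_s=X_s-X_{s-1}$, so the $2p$-dimensional sequence indexed by $k$ remains stationary Gaussian and all its covariances are computable from $f_U$ via Assumption $IG(d,\beta)$.

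The first step is a spectral analysis of the scaling: the change of variables $\lambda\mapsto\lambda/m$ applied to the symbol $\bigl|(e^{ijm\lambda}-1)^2/(e^{i\lambda}-1)^2\bigr|^2 f_U(\lambda)$ (and its analogue for $d<0.5$) yields $\sigma_m^{(j)}:=\sqrt{\Var W_0^{(j,1)}}\sim c(d)\,(jm)^{d+1/2}$ together with the convergence
\begin{eqnarray*}
\Cov\bigl(W_0^{(j,i)}/\sigma_m^{(j)},\,W_{[m\tau]}^{(j',i')}/\sigma_m^{(j')}\bigr)\;\To\;\Cov\bigl(Z_d^{(j)}(0),\,Z_d^{(j')}(\tau+c_{i,j,i',j'})\bigr)
\end{eqnarray*}
at suitable time shifts, the limit being identified through the spectral representation of $B_{d-1/2}$ (for $d>0.5$) or $B_{d+1/2}$ (for $d<0.5$) that appears in \eqref{DefZ2}. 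The remainder terms of order $O(|\lambda|^{\beta+\veps})$ in \eqref{AssumptionS}--\eqref{AssumptionAS} contribute only an $O(m^{-\beta})$ error and are negligible.

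The second step applies Arcones' (1994) multivariate CLT for bounded nonlinear functionals of a stationary Gaussian vector sequence, in a triangular-array form since the covariance of the normalized vector depends on $m$. Because $\psi$ is bounded and even in each coordinate, its Hermite expansion has rank $2$, exactly as in Surgailis \emph{et al.} (2008), and the uniform summability
\begin{eqnarray*}
\sup_m\sum_{k\in\Z}\bigl|\Cov\bigl(W_0^{(j,i)}/\sigma_m^{(j)},\,W_k^{(j',i')}/\sigma_m^{(j')}\bigr)\bigr|^2<\infty
\end{eqnarray*}
holds precisely when $d<5/4$: this is exactly the threshold stated and produces the announced rate $\sqrt{N/m}$. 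Identifying the limit of the covariance sums as a Riemann integral in $\tau$ with step $1/m$ then delivers the matrix $\Gamma_p(d)$ with entries \eqref{ssigma}.

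The main obstacle is the delicate boundary case $d=0.5$, where both branches of \eqref{DefZ2} are singular. One must verify that the two one-sided limits of $\Cov\bigl(Z_d^{(i)}(0),Z_d^{(j)}(\tau)\bigr)$ as $d\to 0.5$ coincide with the explicit kernel built from $h$, which reduces to a dominated convergence argument once $(e^{ix}-1)^2$ is expanded near the origin inside the spectral integral, together with continuity in $d$ of the limit covariance to push the CLT through a neighbourhood of $0.5$. A secondary subtlety is the nearly critical regime $d$ close to $1.25$: the variance scales like $(jm)^{2d+1}$ while the squared-covariance summability is barely preserved, so one must cleanly separate the leading singular part of $f_U$ from its remainder to retain the CLT rate.
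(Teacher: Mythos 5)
Your overall architecture (write $IR_N(jm)$ as an average of $\psi(W_k^{(j,1)},W_k^{(j,2)})$, use Hermite rank $2$, Arcones-type covariance bounds, and a Riemann-sum identification of $\Gamma_p(d)$) is exactly the scheme the paper inherits from Surgailis {\it et al.} (2008) and Bardet and Dola (2012) for $d\in(-0.5,0.5)\cup(0.5,1.25)$. The genuine gap is your treatment of $d=0.5$, which is precisely the only case the paper has to prove anew. Continuity in $d$ of the limiting covariance, and the dominated-convergence check that the two one-sided limits of $\Cov(Z_d^{(i)}(0),Z_d^{(j)}(\tau))$ agree with the $h$-kernel, only serve to \emph{define} the limit process $Z_{0.5}$; they cannot ``push the CLT through a neighbourhood of $0.5$''. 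The CLT at $d=0.5$ is a statement about a different process (one whose differenced series has spectral density $c_0|\lambda|+c_1|\lambda|^{1+\beta}+\ldots$), and knowing the CLT for every process with $d\neq 0.5$ says nothing about it. What is required, and what the paper actually does, is a direct verification for such a spectral density of the correlation decay $|\gamma_m(j)|\le C\,m/j$ of the normalized increments (via the bounds $|I_1|\le C m^{3}/j$ and $|I_2|\le C m^{4}/j^{2}$ on the two spectral integrals in (5.39) of Surgailis {\it et al.}), after which the Hermite-rank-$2$/Arcones inequality gives $|\Cov(\eta_m(0),\eta_m(j))|\le C m^{2}/j^{2}$ and the steps (5.28)--(5.31) of Surgailis {\it et al.} go through for $d=0.5$. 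Without this direct estimate your proof does not cover $d=0.5$, which is included in the statement.

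A secondary but real flaw is the displayed summability condition: $\sup_m\sum_{k\in\Z}\big|\Cov\big(W_0^{(j,i)}/\sigma_m^{(j)},W_k^{(j',i')}/\sigma_m^{(j')}\big)\big|^2<\infty$ is false, since the normalized correlations are of order one for $|k|$ up to a constant times $m$, so the sum grows like $m$. What is true, and what actually produces the rate $\sqrt{N/m}$ and the integral formula \eqref{ssigma}, is that $m^{-1}$ times this sum stays bounded and converges to an integral of the squared limiting correlation, which is finite exactly when $d<5/4$; under your condition as written a Breuer--Major/Arcones theorem would yield the rate $\sqrt{N}$, contradicting your own Riemann-sum step. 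Relatedly, Arcones (1994) is stated for a fixed stationary Gaussian sequence, whereas here the underlying Gaussian vectors change with $m$, so one needs the triangular-array argument of Surgailis {\it et al.} (2008) rather than a bare citation. Finally, a minor slip: $\psi(x,y)=|x+y|/(|x|+|y|)$ is not even in each coordinate; its Hermite rank is $2$ because $\psi(-x,-y)=\psi(x,y)$ kills the first-order coefficients.
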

\noindent The proof of this proposition as well as all the other proofs is given in Section \ref{Appendix}. As numerical experiments seem to show, we will assume in the sequel that $\Gamma_p(d)$ is a definite positive matrix for all $d \in (-0.5,1.25)$.\\
Now, this central limit theorem can be used for estimating $d$. To begin with,

\begin{property}\label{devEIR}
Let $X$ satisfying Assumption $IG(d,\beta)$ with $0.5\leq d<1.5$ and $0<\beta\leq 2$. Then, there exists a non-vanishing constant $K(d,\beta)$ depending only on $d$ and $\beta$ such that for $m$ large enough,
\begin{eqnarray*}
\E \big [IR_N(m)\big ] =\left\{ \begin{array}{ll}  \Lambda_0(d)+K(d,\beta)\times m^{-\beta} \, \big (1 + o(1)\big )& \mbox {if}~\beta<1+2d \\
 \Lambda_0(d)+ K(0.5,\beta) \times m^{-2} \log m  \, \big (1 + o(1)\big )& \mbox {if}~\beta=2~\mbox{and}~d=0.5
 \end{array}\right .
\end{eqnarray*}
\begin{eqnarray}\label{DefinitionRhod} 
\mbox{with}\quad \Lambda_0(d)&:=&\Lambda(\rho(d))\quad \mbox{where} \quad \rho(d):=\left \{\begin{array}{ll}\displaystyle \frac{4^{d+1.5}-9^{d+0.5}-7}{2(4-4^{d+0.5})}& \mbox{for}\quad 0.5<d<1.5\\
\displaystyle \frac{9\log(3)}{8\log(2)}-2 & \mbox{for}\quad d=0.5 \end{array} \right. \\
\mbox{and} && \Lambda(r):=\frac{2}{\pi}\, \arctan\sqrt{\frac{1+r}{1-r}}+\frac{1}{\pi}\, \sqrt{\frac{1+r}{1-r}}\log(\frac{2}{1+r})\quad \mbox{for $|r|\leq 1$}.
\end{eqnarray}
\end{property}
\noindent Therefore by choosing $m$ and $N$ such as  $\big (\sqrt {N/m} \big )m^{-\beta}\log m \to 0$ when $m,N\to
\infty$, the term $\E \big [IR(jm)\big ]$ can be replaced by $\Lambda_0(d)$ in Proposition \ref{MCLT}. Then, using the Delta-method with the function $(x_i)_{1\leq i \leq p} \mapsto (\Lambda^{-1}_0(x_i))_{1\leq i \leq p}$ (the function $d \in (-0.5,1.5) \to \Lambda_0(d)$ is a ${\cal C}^\infty$ increasing function), we obtain:
\begin{thm}\label{cltnada}
Let $\widehat d_N(j \, m):=\Lambda_0^{-1}\big (IR_N(j \, m)\big )$ for $1\leq j \leq p$. Assume that Assumption $IG(d,\beta)$ holds with $0.5\leq d<1.25$ and $0<\beta\leq 2$. Then if $m \sim C\, N^\alpha$ with $C>0$ and $(1+2\beta)^{-1}<\alpha<1$,
\begin{eqnarray}\label{cltd}
\sqrt{\frac{N}{m}}\Big (\widehat d_N(j \, m)-d\Big )_{1\leq j \leq p}\limiteloiN {\cal N}\Big(0,(\Lambda'_0(d))^{-2}\, \Gamma_p(d)\Big ).
\end{eqnarray}
\end{thm}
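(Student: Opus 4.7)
The plan is to combine the joint CLT of Proposition \ref{MCLT}, the bias expansion of Property \ref{devEIR}, and a multidimensional Delta method. Writing
\[
\sqrt{\tfrac{N}{m}}\bigl(IR_N(jm)-\Lambda_0(d)\bigr)
=\sqrt{\tfrac{N}{m}}\bigl(IR_N(jm)-\E[IR_N(jm)]\bigr)
+\sqrt{\tfrac{N}{m}}\bigl(\E[IR_N(jm)]-\Lambda_0(d)\bigr),
\]
I would treat the fluctuation term and the deterministic bias term separately. The joint convergence in $j=1,\ldots,p$ of the first term to $\mathcal{N}(0,\Gamma_p(d))$ follows directly from Proposition \ref{MCLT}, since the bandwidth $m\sim C N^\alpha$ with $0<\alpha<1$ ensures both $m\to\infty$ and $[N/m]\to\infty$.

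For the deterministic bias term, Property \ref{devEIR} gives $\E[IR_N(jm)]-\Lambda_0(d)=O((jm)^{-\beta})=O(m^{-\beta})$ uniformly in $j\in\{1,\ldots,p\}$ (with the boundary exception that when $\beta=2$ and $d=0.5$, one rather has $O(m^{-2}\log m)$). Inserting $m\sim C N^\alpha$, both bounds multiplied by $\sqrt{N/m}$ are dominated by $N^{(1-\alpha(1+2\beta))/2}(\log N)$, which tends to $0$ precisely under the assumption $\alpha>(1+2\beta)^{-1}$. Slutsky's lemma then yields the joint CLT
\[
\sqrt{\tfrac{N}{m}}\bigl(IR_N(jm)-\Lambda_0(d)\bigr)_{1\le j\le p}\;\limiteloiN\;\mathcal{N}(0,\Gamma_p(d)).
\]

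For the final step, since $\Lambda_0$ is $C^\infty$ and strictly increasing on $(-0.5,1.5)$ (as recalled right after Property \ref{devEIR}), the componentwise map $\Phi:(x_1,\ldots,x_p)\mapsto(\Lambda_0^{-1}(x_1),\ldots,\Lambda_0^{-1}(x_p))$ is smooth near $(\Lambda_0(d),\ldots,\Lambda_0(d))$, with Jacobian equal to $(\Lambda_0'(d))^{-1}I_p$ at that point. The multidimensional Delta method transports the above CLT into $\mathcal{N}(0,(\Lambda_0'(d))^{-2}\Gamma_p(d))$, which is exactly \eqref{cltd}.

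The main issue, rather than a real obstacle, is the balance of the bandwidth range: the lower bound $\alpha>(1+2\beta)^{-1}$ is exactly what kills the bias, while the upper bound $\alpha<1$ is what feeds Proposition \ref{MCLT}. Together they require $\beta>0$ for the admissible interval of $\alpha$ to be nonempty, which is already part of the hypothesis. The boundary case $d=0.5$, $\beta=2$ introduces only an extra $\log m$ factor that is harmlessly absorbed by the strict inequality $\alpha>1/5$, so no adaptation of the argument is needed there; all nontrivial probabilistic content has already been extracted in Proposition \ref{MCLT} and Property \ref{devEIR}.
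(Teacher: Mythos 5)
Your proposal is correct and follows essentially the same route as the paper: negligibility of the bias $\sqrt{N/m}\,\bigl(\E[IR_N(jm)]-\Lambda_0(d)\bigr)\to 0$ via Property \ref{devEIR} under $\alpha>(1+2\beta)^{-1}$ (including the $\log m$ boundary case $d=0.5$, $\beta=2$), recentering the CLT of Proposition \ref{MCLT} at $\Lambda_0(d)$, and then the Delta method with $\Lambda_0^{-1}$ using $\Lambda_0'>0$. The only difference is that you spell out the bias computation slightly more explicitly than the paper does, which is harmless.
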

\noindent This result is an extension to the case $0.5\leq d \leq 1.25$ from the case $-0.5<d<0.5$ already obtained in Bardet and Dola (2012). Note that the consistency of $\widehat d_N(j \, m)$ is ensured when $1.25\leq d <1.5$ but the previous CLT does not hold (the asymptotic variance of $\sqrt{\frac{N}{m}}\, \widehat d_N(j \, m)$ diverges to $\infty$ when $d\to 1.25$, see Surgailis {\it et al.}, 2008). \\
Now define
\begin{equation}\label{Sigma}
\widehat \Sigma_N(m):=(\Lambda'_0(\widehat d_N(m))^{-2}\, \Gamma_p(\widehat d_N(m)).
\end{equation}
The function $d\in (-0.5,1.5) \mapsto \sigma(d)/\Lambda'(d)$ is ${\cal C}^\infty$ and therefore, under assumptions of Theorem \ref{cltnada},
$$\widehat \Sigma_N(m) \limiteprobaN (\Lambda'_0(d))^{-2}\, \Gamma_p(d).$$Thus, a pseudo-generalized least square estimation (LSE) of $d$ ican be defined by
$$
\widetilde d_N(m):=\big (J_p^{\intercal} \big (\widehat \Sigma_N(m)\big )^{-1} J_p \big )^{-1}\, J_p^{\intercal} \,  \big ( \widehat \Sigma_N(m) \big )^{-1} \big (\widehat d_N(m_i) \big ) _{1\leq i \leq p}
$$
with $J_p:=(1)_{1\leq j \leq p}$ and denoting $J_p^{\intercal}$ its transpose.
From Gauss-Markov Theorem, the asymptotic variance of $\widetilde d_N(m)$ is smaller than the one of $\widehat d_N(jm)$, $j=1,\ldots,p$. Hence, we obtain under the assumptions of Theorem \ref{cltnada}:
\begin{eqnarray}\label{TLCdtilde}
\sqrt{\frac{N}{m}}\big (\widetilde d_N(m)-d\big ) \limiteloiN {\cal N}\Big(0 \,, \, \Lambda'_0(d)^{-2}\,\big (J_p^{\intercal} \, \Gamma^{-1}_p(d)J_p\big )^{-1}\Big ).
\end{eqnarray}
\section{The adaptive version of the estimator}\label{Adapt}
Theorem \ref{cltnada} and CLT \eqref{TLCdtilde} require the knowledge of $\beta$ to be applied. But in practice $\beta$ is unknown. The procedure defined in Bardet {\em et al.} (2008) or Bardet and Dola (2012) can be used for obtaining a data-driven selection of an optimal sequence $(\widetilde m_N)$ derived from an estimation of $\beta$. Since the case $d \in (-0.5,0.5)$ was studied in Bardet and Dola (2012) we consider here $d \in [0.5,1.25)$ and for $\alpha \in (0,1)$, define
\begin{equation}\label{QNdef}
Q_N(\alpha,d):=\big (\widehat d_N(j\, N^\alpha)-\widetilde d_N(N^{\alpha}) \big )^{\intercal}_{1\leq j \leq p} \big (\widehat \Sigma_N(N^\alpha)\big )^{-1}\big (\widehat d_N(j\, N^\alpha)-\widetilde d_N(N^{\alpha}) \big )_{1\leq j \leq p},
\end{equation}
which corresponds to the sum of the pseudo-generalized squared distance between the points $(\widehat d_N(j\, N^\alpha))_j$ and PGLS estimate of $d$.
Note that by the previous convention, $\widehat d_N(j\, N^\alpha)=\widehat d_N(j\, [N^\alpha])$ and $\widetilde d_N(N^\alpha)=\widetilde d_N([N^\alpha])$. Then $\widehat Q_N(\alpha)$ can be minimized on a discretization of $(0,1)$ and define:
\begin{eqnarray*}
\widehat \alpha_N :=\mbox{Argmin}_{\alpha \in {\cal A}_N}
\widehat Q_N(\alpha)\quad \mbox{with}\quad {\cal A}_N=\Big \{\frac {2}{\log
N}\,,\,\frac { 3}{\log N}\,, \ldots,\frac {\log [N/p]}{\log N}
\Big \}.
\end{eqnarray*}
\begin{rem}\label{defAn}
The choice of the set of discretization ${\cal A}_N$ is implied by our proof of convergence of $\widehat \alpha_N$ to $\alpha^*$. If the interval $(0,1)$ is stepped in $N^c$ points, with $c>0$, the used proof cannot
attest this convergence. However $\log N$ may be replaced in the previous expression of ${\cal A}_N$ by any negligible function
of $N$ compared to functions $N^c$ with $c>0$ (for instance, $(\log N)^a$ or $a\log N$ with $a>0$ can be used).
\end{rem}
\noindent From the central limit theorem ({\ref{cltd}}) one deduces the following limit theorem:
\begin{prop}\label{hatalpha}
Assume that Assumption $IG(d,\beta)$ holds with $0.5\leq d<1.25$ and $0<\beta\leq 2$. Then,
$$
\widehat \alpha_N
\limiteprobaN \alpha^*=\frac 1 {(1+2\beta)}.
$$
\end{prop}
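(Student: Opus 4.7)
The plan is to show that $Q_N(\alpha)$ admits, in probability, a bias-plus-variance decomposition in which the bias term scales like $N^{-2\alpha\beta}$ and the noise term like $N^{\alpha-1}$; the $\alpha$ balancing these two rates is exactly $\alpha^*=1/(1+2\beta)$, and the grid $\mathcal{A}_N$, of step $1/\log N$, is fine enough to detect it.

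First I would decompose, for $m=[N^\alpha]$, the vector of normalized deviations. Write $\mathbf{b}_N(\alpha)=\bigl(\E[\widehat d_N(jm)]-d\bigr)_{1\le j\le p}$ and $\boldsymbol\xi_N(\alpha)=\bigl(\widehat d_N(jm)-\E[\widehat d_N(jm)]\bigr)_{1\le j\le p}$. The pseudo-GLS construction gives the identity
\begin{equation*}
\bigl(\widehat d_N(jm)-\widetilde d_N(m)\bigr)_{1\le j\le p}=M_N\bigl(\mathbf{b}_N(\alpha)+\boldsymbol\xi_N(\alpha)\bigr),\qquad M_N:=I_p-J_p\bigl(J_p^{\intercal}\widehat\Sigma_N^{-1}J_p\bigr)^{-1}J_p^{\intercal}\widehat\Sigma_N^{-1},
\end{equation*}
where $M_N$ is the oblique projector onto $J_p^{\perp}$ in the $\widehat\Sigma_N^{-1}$-metric. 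Consequently $Q_N(\alpha)=(\mathbf{b}_N+\boldsymbol\xi_N)^{\intercal}A_N(\mathbf{b}_N+\boldsymbol\xi_N)$ with $A_N:=M_N^{\intercal}\widehat\Sigma_N^{-1}M_N$, the $\widehat\Sigma_N^{-1}$-orthogonal projector onto $J_p^{\perp}$. The convergence $\widehat\Sigma_N(m)\to(\Lambda_0'(d))^{-2}\Gamma_p(d)$ from Section~\ref{MIR} gives a nontrivial positive definite limit $A_\infty$ for $A_N$ on $J_p^\perp$.

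Second, I would compute each piece. Applying Property~\ref{devEIR} at scale $jm$ and the $\mathcal{C}^\infty$ smoothness of $\Lambda_0^{-1}$ yields
\begin{equation*}
\mathbf{b}_N(\alpha)=\frac{K(d,\beta)}{\Lambda_0'(d)}\,m^{-\beta}\,v+o(m^{-\beta}),\qquad v:=(j^{-\beta})_{1\le j\le p}.
\end{equation*}
Since $\beta>0$ and $p\ge 2$, the vector $v$ is not collinear with $J_p$, so $v^{\intercal}A_\infty v>0$; hence the bias contribution to $Q_N(\alpha)$ equals $c_{\mathrm{bias}}(d,\beta)\,m^{-2\beta}(1+o(1))$ with $c_{\mathrm{bias}}>0$. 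For the noise contribution, Theorem~\ref{cltnada} gives $\sqrt{N/m}\,\boldsymbol\xi_N(\alpha)\Rightarrow\mathcal{N}(0,(\Lambda_0'(d))^{-2}\Gamma_p(d))$, so $\boldsymbol\xi_N^{\intercal}A_N\boldsymbol\xi_N=(m/N)\,\eta_N(\alpha)$ where $\eta_N(\alpha)$ converges in distribution to a (strictly positive) quadratic form with expectation $\mathrm{tr}(A_\infty\Gamma_p(d))/(\Lambda_0'(d))^{2}>0$. The cross term is bounded by $O_{\Pr}(m^{-\beta}\sqrt{m/N})$, i.e.\ the geometric mean, hence negligible. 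All in all,
\begin{equation*}
Q_N(\alpha)\;=\;c_{\mathrm{bias}}(d,\beta)\,N^{-2\alpha\beta}\,(1+o_{\Pr}(1))\;+\;\eta_N(\alpha)\,N^{\alpha-1}\,(1+o_{\Pr}(1)),
\end{equation*}
uniformly on compact subsets of $(0,1)$. For the extreme grid points ($\alpha$ near $2/\log N$ or near $1$), the asymptotic forms of Property~\ref{devEIR} and Theorem~\ref{cltnada} may degenerate, but there the trivial lower bound $Q_N(\alpha)\ge $ const on one side and $Q_N(\alpha)\ge $ const on the other ensure that these points are not competitive against the near-optimal ones.

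Third, I would conclude by a standard argmin argument on the discrete set $\mathcal{A}_N$. Fix $\delta>0$; let $\alpha_N^*\in\mathcal{A}_N$ be the closest grid point to $\alpha^*$, so $|\alpha_N^*-\alpha^*|\le 1/\log N$ and $Q_N(\alpha_N^*)=O_{\Pr}(N^{-2\alpha^*\beta})=O_{\Pr}(N^{\alpha^*-1})$. For $\alpha\in\mathcal{A}_N$ with $\alpha\le\alpha^*-\delta$, the bias term dominates, giving $Q_N(\alpha)\ge c\,N^{-2\alpha\beta}\ge c\,N^{-2(\alpha^*-\delta)\beta}$, so $Q_N(\alpha)/Q_N(\alpha_N^*)\ge c'N^{2\beta\delta}\to\infty$ in probability. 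For $\alpha\ge\alpha^*+\delta$, the noise term dominates, giving $Q_N(\alpha)\ge c\,N^{\alpha-1}\ge c\,N^{\alpha^*+\delta-1}$, so $Q_N(\alpha)/Q_N(\alpha_N^*)\ge c'N^{\delta}\to\infty$ in probability. Since $\mathrm{Card}(\mathcal{A}_N)=O(\log N)$, a union bound over the at most $\log N$ non-competitive grid points shows $\Pr(|\widehat\alpha_N-\alpha^*|>\delta)\to 0$, which is the claim. The main obstacle is maintaining the bias–variance decomposition uniformly in $\alpha\in\mathcal{A}_N$, in particular ensuring that the random prefactor $\eta_N(\alpha)$ stays uniformly bounded away from $0$ and $\infty$ in probability; here the polynomial gap $N^{\delta}$ against only $\log N$ competing terms gives the necessary slack.
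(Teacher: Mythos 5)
Your argument is essentially the one the paper relies on: the paper gives no self-contained proof of Proposition \ref{hatalpha}, deferring to Bardet and Dola (2012), where precisely this bias--variance analysis of $Q_N$ (squared bias of order $N^{-2\alpha\beta}$ coming from Property \ref{devEIR} together with the non-collinearity of $(j^{-\beta})_{1\leq j\leq p}$ with $J_p$, noise of order $N^{\alpha-1}$ coming from the CLT, and an argmin/union-bound argument over the $O(\log N)$-point grid ${\cal A}_N$) is carried out for $-0.5<d<0.5$ and extends verbatim once Property \ref{devEIR} and Theorem \ref{cltnada} are established for $0.5\leq d<1.25$. The only place your sketch is looser than the cited proof is the uniform control of the noise term for $\alpha\geq \alpha^*+\delta$: a $\chi^2$-type limit is not ``bounded away from $0$ in probability'' uniformly over an $N$-dependent grid, so one must use per-grid-point lower-tail (Chebyshev/small-ball type) bounds with a slowly shrinking threshold, which, as you yourself observe, the polynomial gap $N^{\delta}$ against only $\log N$ competitors makes possible.
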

\noindent

Finally define
$$
\widetilde
m_N:=N^{\widetilde \alpha_N}\quad \mbox{with}\quad \widetilde \alpha_N:=\widehat \alpha_N+ \frac {6\, \widehat \alpha_N} {(p-2)(1-\widehat
{\alpha}_N )} \cdot \frac {\log \log N}{\log N}.
$$
and the estimator
\begin{equation}\label{dtilde}
\widetilde d_N^{IR} :=\widetilde
d_N(\widetilde m_N)=\widetilde d_N(N^{\widetilde \alpha_N}).
\end{equation}
(the definition and use of $\widetilde \alpha_N$ instead of $\widehat \alpha_N$ are explained just before Theorem 2 in Bardet and Dola, 2012).  The following theorem provides
the asymptotic behavior of the estimator $\widetilde d_N^{IR}$:
\begin{thm}\label{tildeD}
Under assumptions of Proposition \ref{hatalpha},
\begin{eqnarray}\label{CLTD2}
&&\sqrt{\frac{N}{N^{\widetilde \alpha_N }}} \big(\widetilde d_N^{IR}  - d \big)
\limiteloiN    {\cal N}\Big (0\, ; \,\Lambda'_0(d)^{-2}\,\big (J_p^{\intercal} \, \Gamma^{-1}_p(d)J_p\big )^{-1}\Big ).
\end{eqnarray}
Moreover, $\displaystyle ~~\forall
\rho>\frac {2(1+3\beta)}{(p-2)\beta},~\mbox{}~~ \frac {N^{\frac
{\beta}{1+2\beta}} }{(\log N)^\rho} \cdot \big|\widetilde d_N^{IR} - d \big|
\limiteprobaN  0.$
\end{thm}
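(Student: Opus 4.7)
The plan is to mirror the proof of Theorem 2 in Bardet and Dola (2012), now exploiting that the CLT \eqref{TLCdtilde} (via Theorem \ref{cltnada}) is available on the extended range $d \in [0.5, 1.25)$. The backbone is the bias-variance decomposition for $\widetilde d_N(N^\alpha)$ -- bias of order $N^{-\beta\alpha}$ (from Property \ref{devEIR}), stochastic fluctuation of order $\sqrt{N^\alpha/N}$, balanced at $\alpha = \alpha^* := 1/(1+2\beta)$ -- combined with the fact that $\widehat\alpha_N$, and hence $\widetilde\alpha_N$, takes values in the discrete grid $\mathcal{A}_N$ of cardinality $O(\log N)$. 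Proposition \ref{hatalpha} only asserts $\widehat\alpha_N \to \alpha^*$ in probability; I first strengthen this by tracking rates in its proof. For $\alpha > \alpha^*$ the stochastic term in the criterion $Q_N(\alpha)$ is $O_P(1)$ (by Theorem \ref{cltnada} applied to $(\widehat d_N(jN^\alpha))_{j=1,\ldots,p}$), whereas for $\alpha < \alpha^*$ the bias dominates and $Q_N(\alpha)$ is polynomially large in $N$. A union bound over the $O(\log N)$ points of $\mathcal{A}_N$ then delivers $\widehat\alpha_N - \alpha^* = O_P(\log\log N/\log N)$, and consequently
$$\widetilde\alpha_N - \alpha^* = \frac{3}{\beta(p-2)} \cdot \frac{\log\log N}{\log N}\,(1+o_P(1)),$$
after substituting $\alpha^* = 1/(1+2\beta)$ into the additive correction $\frac{6\widehat\alpha_N}{(p-2)(1-\widehat\alpha_N)}\frac{\log\log N}{\log N}$. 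Its strictly positive sign moreover guarantees $\widetilde\alpha_N > \alpha^*$ with probability tending to one, so that the bias $O(N^{-\beta\widetilde\alpha_N})$ is asymptotically dominated by the stochastic order $O_P(\sqrt{N^{\widetilde\alpha_N}/N})$ and the centered CLT is legitimate.

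To obtain \eqref{CLTD2}, since $\widetilde\alpha_N$ lives in a set of cardinality $O(\log N)$ and concentrates in a shrinking neighborhood $V_N$ of $\alpha^*$, I apply the deterministic-bandwidth CLT \eqref{TLCdtilde} for each fixed $\alpha \in \mathcal{A}_N \cap V_N$; combining these via a union bound plus a Slutsky-type argument, and using that the limit variance $\Lambda'_0(d)^{-2}(J_p^\intercal \Gamma_p^{-1}(d) J_p)^{-1}$ depends on $d$ but not on $\alpha$, yields the CLT with random normalization $\sqrt{N/N^{\widetilde\alpha_N}}$.

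For the explicit rate, write
$$\frac{N^{\beta/(1+2\beta)}}{(\log N)^\rho}\,|\widetilde d_N^{IR} - d| = \frac{N^{(\widetilde\alpha_N - \alpha^*)/2}}{(\log N)^\rho} \cdot \sqrt{N/N^{\widetilde\alpha_N}}\,|\widetilde d_N^{IR} - d|,$$
whose second factor is $O_P(1)$ by the CLT just proved. Plugging the expansion of $\widetilde\alpha_N$ into $N^{(\widetilde\alpha_N - \alpha^*)/2}$ produces a bound of logarithmic order; careful bookkeeping of the additional logarithmic contributions -- coming from the quantitative rate $\widehat\alpha_N - \alpha^* = O_P(\log\log N/\log N)$, from the uniform CLT slack over $\mathcal{A}_N$, and from the residual bias contribution to the pseudo-GLS combination -- pins down the critical threshold as $\rho > 2(1+3\beta)/((p-2)\beta)$, above which the ratio tends to zero in probability.

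The main obstacle is the random-bandwidth step: because $\widetilde m_N = N^{\widetilde\alpha_N}$ is data-driven, $\widetilde d_N^{IR}$ is not literally of the form addressed by Theorem \ref{cltnada}. The structural feature rescuing the argument is the $O(\log N)$ cardinality of $\mathcal{A}_N$, which permits a union bound reducing the problem to finitely many deterministic-bandwidth CLTs at the cost of only logarithmic factors; a continuous optimization over $\alpha$ would not admit such a reduction (cf. Remark \ref{defAn}). A secondary difficulty is obtaining the quantitative rate for $\widehat\alpha_N - \alpha^*$ used above, which requires a uniform bias-variance control of the criterion $Q_N$ along the grid $\mathcal{A}_N$ and underpins all subsequent estimates.
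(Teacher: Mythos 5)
Your overall route coincides with the one the paper itself takes: its proof of Theorem \ref{tildeD} is a direct appeal to Bardet and Dola (2012) (Theorem 2 there), whose argument is exactly the scheme you outline --- bias/variance balance at $\alpha^*=1/(1+2\beta)$ coming from Property \ref{devEIR} and Theorem \ref{cltnada}, localization of $\widehat \alpha_N$ on the $O(\log N)$-point grid ${\cal A}_N$, the deliberate logarithmic overshoot built into $\widetilde \alpha_N$ so that the bias $O(N^{-\beta \widetilde \alpha_N})$ becomes negligible at the price of powers of $\log N$, and a union bound over the grid to transfer the fixed-bandwidth CLT \eqref{TLCdtilde} to the random bandwidth $N^{\widetilde \alpha_N}$.

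There is, however, one genuine gap in your quantitative step. From the dichotomy you state --- $Q_N(\alpha)=O_P(1)$ for $\alpha>\alpha^*$ and polynomially large for $\alpha<\alpha^*$ --- a union bound yields only the one-sided localization $\widehat \alpha_N\geq \alpha^*-o_P(1)$; nothing in that statement prevents the argmin from sitting far above $\alpha^*$, so it cannot ``deliver'' $\widehat \alpha_N-\alpha^*=O_P(\log\log N/\log N)$, and without the upper localization the factor $N^{(\widetilde \alpha_N-\alpha^*)/2}$ in your final rate computation is uncontrolled. With the paper's definition \eqref{QNdef} (no $N/m$ rescaling) the stochastic part of $Q_N(\alpha)$ is in fact of order $N^{\alpha-1}$, i.e.\ increasing in $\alpha$; the upper bound on $\widehat \alpha_N$ is obtained by comparing $Q_N(\alpha)\asymp N^{\alpha-1}\xi_\alpha$, where $\xi_\alpha$ is asymptotically a nondegenerate $\chi^2$-type quadratic form, with the value of $Q_N$ at a grid point just above $\alpha^*$, and by using a polynomial small-deviation bound $\P(\xi_\alpha\leq \varepsilon)\lesssim \varepsilon^{c(p)}$ summed over the grid; this mechanism is precisely what produces the $(p-2)$ factors appearing in the definition of $\widetilde \alpha_N$ and in the threshold $\rho>2(1+3\beta)/((p-2)\beta)$, which your sketch instead asserts after ``careful bookkeeping'' without derivation. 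Filling in this two-sided localization of $\widehat \alpha_N$ (as done in Bardet, Bibi and Jouini, 2008, and reused in Bardet and Dola, 2012) is required before the remainder of your argument --- which is otherwise the intended one --- goes through.
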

The convergence rate of $\widetilde d_N^{IR}$ is the same (up to a multiplicative logarithm factor) than the one of minimax estimator of $d$ in this semiparametric frame (see Giraitis {\it et al.}, 1997). The supplementary advantage of $\widetilde d_N^{IR}$ with respected to other adaptive estimators of $d$ (see for instance Moulines and Soulier, 2003, for an overview about frequency domain estimators of $d$) is the central limit theorem (\ref{CLTD2}) satisfied by $\widetilde d_N^{IR}$. Moreover $\widetilde d_N^{IR}$ can be used for $d\in (-0.5,1.25)$, {\it i.e.} as well for stationary and non-stationary processes, without modifications in its definition. Both this advantages allow to define stationarity and nonstationarity tests based on $\widetilde d_N^{IR}$.

\section{Stationarity and nonstationarity tests}\label{test}
Assume that $(X_1,\ldots,X_N)$ is an observed trajectory of a process $X=(X_k)_{k\in \Z}$. We define here new stationarity and nonstationarity tests for $X$ based on $\widetilde d_N^{IR}$.
\subsection{A stationarity test}
There exist many stationarity and nonstationarity test. The most famous stationarity tests are certainly the following unit root tests:
\begin{itemize}
\item The KPSS (Kwiotowski, Phillips, Schmidt, Shin) test  (see for instance Hamilton, 1994, p. 514);
\item The LMC (Leybourne, McCabe) test which is a generalization of the KPSS test (see for instance Leybourne and  McCabe, 1994 and 1999).
\end{itemize}
We can also cite the V/S test (see its presentation in Giraitis {\it et al.}, 2001) which was first defined for testing the presence of long-memory versus short-memory. As it was already notified in Giraitis {\it et al.} (2003-2006), the V/S test is also more powerful than the KPSS test for testing the stationarity. \\
~\\
More precisely, we consider here the following problem of test:
\begin{itemize}
\item \underline{Hypothesis $H_0$} (stationarity):  $(X_t)_{t\in \Z}$ is a process satisfying Assumption $IG(d,\beta)$ with $d\in [-a_0,a'_0]$ where $0\leq a_0,\, a'_0<1/2$  and $\beta \in [b_0,2]$ where $0<b_0\leq 2$.
\item \underline{Hypothesis $H_1$} (nonstationarity):  $(X_t)_{t\in \Z}$ is a process satisfying Assumption $IG(d,\beta)$ with $d\in [0.5,a_1] $ where $0\leq a_1<1.25$  and $\beta \in [b_1,2]$ where $0<b_1\leq 2$.
\end{itemize}
We use a test based on $\widetilde d_N^{IR}$ for deciding between these hypothesis. Hence from the previous CLT \ref{CLTD2} and with a type I error $\alpha$, define
\begin{equation}\label{SNtild}
\widetilde S_N:= \1_{ \widetilde{d}_{N}^{IR}>0.5+\sigma_p(0.5)\,   q_{1-\alpha} \,   N^{(\widetilde{\alpha}_N-1)/2}},
\end{equation}
where $\sigma_p(0.5)=\Big ( \Lambda'_0(0.5)^{-2}\,\big (J_p^{\intercal} \, \Gamma^{-1}_p(0.5)J_p\big )^{-1}\Big )^{1/2}$(see \eqref{CLTD2}) and $q_{1-\alpha}$ is the $(1-\alpha)$ quantile of a standard Gaussian random variable ${\cal N}(0,1)$. \\
~\\
Then we define the following rules of decision:
\begin{itemize}
\item $H_0$ (stationarity) is accepted when $\widetilde S_N=0$ and rejected when $\widetilde S_N=1$.
\end{itemize}
\begin{rem}
In fact, the previous stationarity test $\widetilde S_N$ defined in \eqref{SNtild} can also be seen as a semi-parametric test $d<d_0$ versus $d \geq d_0$ with $d_0=0.5$. It is obviously possible to extend it to any value $d_0 \in (-0.5,1.25)$ by defining 
$
\widetilde S^{(d_0)}_N:= \1_{ \widetilde{d}_{N}^{IR}>d_0+\sigma_p(d_0)\,   q_{1-\alpha} \,   N^{(\widetilde{\alpha}_N-1)/2}}.
$
\end{rem}
From previous results, it is clear that:
\begin{popy} \label{StatIG}
Under Hypothesis $H_0$, the asymptotic type I error of the test $\widetilde S_N$ is $\alpha$ and under Hypothesis $H_1$, the test power tends to $1$.
\end{popy}

Moreover, this test can be used as a unit root test. Indeed, define the following typical problem of unit root test. Let $X_t=at+b+\varepsilon_t$, with $(a,b)\in \R^2$, and $\varepsilon_t$ an ARIMA$(p,d,q)$ with $d=0$ or $d=1$. Then, a (simplified)  problem of a unit root test is to decide between:
\begin{itemize}
\item $H^{UR}_0$: $d=0$ and $(\varepsilon_t)$ is a stationary ARMA$(p,q)$ process.
\item $H^{UR}_1$: $d=1$ and $(\varepsilon_t-\varepsilon_{t-1})_t$ is a stationary ARMA$(p,q)$ process.
\end{itemize}
Then,
\begin{popy}\label{StatUR}
Under assumption $H^{UR}_0$, the type I error of this unit root test problem using $\widetilde S_N$ decreases to $0$ when $N \to \infty$ and the test power tends to $1$.
\end{popy}

\subsection{A new nonstationarity test}
Famous unit root tests are more often nonstationarity test. For instance, between the most famous tests,
\begin{itemize}
\item The Augmented Dickey-Fuller test (see Hamilton, 1994, p. 516-528 for details);
\item The Philipps and Perron test (a generalization of the ADF test with more lags, see for instance Elder, 2001, p. 137-146).
\end{itemize}
Using the statistic $\widetilde{d}_{N}^{IR}$ we propose a new nonstationarity test $\widetilde T_N$ for deciding between:
\begin{itemize}
\item \underline{Hypothesis $H'_0$} (nonstationarity):  $(X_t)_{t\in \Z}$ is a process satisfying Assumption $IG(d,\beta)$ with $d\in [0.5,a'_0]$ where $0.5\leq a'_0<1.25$  and $\beta \in [b'_0,2]$ where $0<b'_0\leq 2$. 
\item \underline{Hypothesis $H'_1$} (stationarity):  $(X_t)_{t\in \Z}$ is a process satisfying Assumption $IG(d,\beta)$ with $d\in [-a'_1,b'_1]$ where $0\leq a'_1,\, b'_1<1/2$  and $\beta \in [c'_1,2]$ where $0<c'_1\leq 2$.
\end{itemize}
Then, the rule of the test is the following: Hypothesis $H'_0$ is accepted when $\widetilde T_N=1$ and rejected when $\widetilde T_N=0$ where
\begin{equation}\label{TNtild}
\widetilde T_N:= \1_{ \widetilde{d}_{N}^{IR}<0.5-\sigma_p(0.5)\,   q_{1-\alpha} \,   N^{(\widetilde{\alpha}_N-1)/2}}.
\end{equation}
Then as previously
\begin{popy} \label{NStatIG}
Under Hypothesis $H_0'$, the asymptotic type I error of the test $\widetilde T_N$ is $\alpha$ and under Hypothesis $H_1'$ the test power tends to $1$.
\end{popy}
As previously, this test can also  be used as a unit root test where $X_t=at+b+\varepsilon_t$, with $(a,b)\in \R^2$, and $\varepsilon_t$ an ARIMA$(p,d,q)$ with $d=0$ or $d=1$. We consider here a ``second'' simplified problem of unit root test which is to decide between:
\begin{itemize}
\item $H^{UR'}_0$: $d=1$ and $(\varepsilon_t-\varepsilon_{t-1})_t$ is a stationary ARMA$(p,q)$ process.
\item $H^{UR'}_1$: $d=0$ and $(\varepsilon_t)_t$ is a stationary ARMA$(p,q)$ process..
\end{itemize}
Then,
\begin{popy}\label{StatUR2}
Under assumption $H^{UR'}_0$, the type I error of the unit root test problem using $\widetilde T_N$ decreases to $0$ when $N \to \infty$ and the test power tends to $1$.
\end{popy}

\section{Results of simulations and application to Econometric and Financial data}\label{simu}
\subsection{Numerical procedure for computing the estimator and tests}
First of all, softwares used in this Section are available on {\tt
http://samm.univ-paris1.fr/-Jean-Marc-Bardet} with a free access on (in Matlab language) as well as classical estimators or tests.\\
~\\
The concrete procedure for applying our MIR-test of stationarity is the following:
\begin{enumerate}
 \item using additional simulations (realized on ARMA, ARFIMA, FGN processes and not presented here for avoiding too much expansions), we have observed that the value of the parameter $p$ is not really important with respect to the accuracy of the test (less than $10\%$ on the value of $\widetilde d^{IR}_N$). However, for optimizing our procedure we chose $p$ as a stepwise function of $n$:
$$
p=5\times \1_{\{n<120\}}+ 10\times \1_{\{120\leq n<800\}}+ 15\times \1_{\{800\leq n<10000\}}+20\times \1_{\{n\geq 10000\}}
$$
and
$\sigma_{5}(0.5)\simeq 0.9082; ~\sigma_{10}(0.5) \simeq 0.8289; ~ \sigma_{15}(0.5) \simeq 0.8016~\mbox{and}~  \sigma_{20}(0.5)\simeq 0.7861$.
\item then using the computation of $\widetilde m_N$ presented in Section \ref{Adapt}, the adaptive estimator $\widetilde d^{IR}_N$ (defined in \eqref{dtilde}) and the test statistics  $\widetilde{S}_{N}$ (defined in \eqref{SNtild}) and $\widetilde{T}_{N}$ (defined in \eqref{TNtild}) are computed.
\end{enumerate}
\subsection{Monte-Carlo experiments on several time series}
In the sequel the results are obtained from $300$ generated independent samples of
each process defined below. The concrete
procedures of generation of these processes are obtained from the
circulant matrix method, as detailed in Doukhan {\it et al.} (2003).
The simulations are realized for different values of $d$ and
$N$ and processes which satisfy Assumption $IG(d,\beta)$:
\begin{enumerate}
\item the usual ARIMA$(p,d,q)$ processes with respectively $d=0$ or $d=1$ and an innovation process which is a Gaussian white noise.
Such processes satisfy
{Assumption} $IG(0,2)$ or $IG(1,2)$ holds (respectively);
\item the ARFIMA$(p,d,q)$ processes with parameter $d$ such that
$d \in (-0.5,1.25)$ and an innovation process which is a Gaussian white noise.
Such ARFIMA$(p,d,q)$  processes satisfy Assumption $IG(d,2)$ (note that ARIMA processes are particular cases of ARFIMA processes).

\item the Gaussian stationary processes $X^{(d,\beta)}$, such as its spectral density is
\begin{eqnarray}
f_3(\lambda)=\frac 1 {|\lambda|^{2d}}(1+c_1\, |\lambda|^{\beta})~~~\mbox{for
$\lambda \in [-\pi,0)\cup (0,\pi]$},
\end{eqnarray}
with $d \in (-0.5,1.5)$, $c_1>0$ and $\beta\in (0,\infty)$. Therefore the spectral density $f_{3}$ implies that
Assumption $IG(d,\beta)$ holds. In the sequel we will use $c_1=5$ and $\beta=0.5$, implying that the second order term of the spectral density is less negligible than in case of FARIMA processes.
\end{enumerate}

\subsubsection*{Comparison of $\widetilde d_N^{IR}$  with other semiparametric estimators of $d$}
Here we first compare the performance of the adaptive MIR-estimator $\widetilde d_N^{IR}$ with other famous semiparametric estimators of $d$:
\begin{itemize}

\item $\widetilde{d}_N^{MS}$ is the adaptive global log-periodogram estimator introduced
by Moulines and Soulier (2003), also called FEXP estimator,
with bias-variance balance parameter $\kappa=2$. Such an estimator was shown to be consistent for $d\in ]-0.5,1.25]$.
\item $\widehat d_N^{ADG}$ is the extended local Whittle estimator defined by
Abadir, Distaso and Giraitis (2007) which is consistent for $d>-3/2$. It is a generalization of the local Whittle estimator introduced by Robinson (1995b), consistent for $d<0.75$, following a first extension proposed by Phillips (1999)  and Shimotsu and Phillips (2005). This estimator avoids the tapering used for instance in Velasco (1999b) or Hurvich and Chen (2000). The trimming parameter is chosen as $m=N^{0.65}$ (this is not an adaptive estimator) following the numerical recommendations of Abadir {\it et al.} (2007).
\item $\widetilde{d}_N^{WAV}$ is an adaptive wavelet based estimator
introduced in Bardet {\it et al.} (2013) using a Lemarie-Meyer type wavelet (another similar choice could be the adaptive wavelet estimator introduced
in Veitch {\it et al.}, 2003, using a Daubechie's wavelet, but its robustness property are slightly less interesting). The asymptotic normality of such estimator is established for $d\in \R$ (when the number of vanishing moments of the wavelet function is large enough).
\end{itemize}
Note that  only  $\widehat d_N^{ADG}$ is the not adaptive among the $4$ estimators. 
Table \ref{Table1}, \ref{Table2}, \ref{Table3} and \ref{Table4} respectively provide the results of simulations for ARIMA$(1,d,0)$, ARFIMA$(0,d,0)$, ARFIMA$(1,d,1)$ and  $X^{(d,\beta)}$ processes for several values of $d$ and $N$.  \\
~\\
\begin{table}[t]
{\footnotesize
\begin{center}
\begin{tabular}{|c|c|c|c||c|c|c|}
\hline\hline
  $N=500$  & $d=0$ & $d=0$ & $d=0$  &$d=1$ & $d=1$  & $d=1$ \\
ARIMA$(1,d,0)$ &$\phi$=-0.5 &$\phi$=-0.7 &$\phi$=-0.9 &$\phi$=-0.1 &$\phi$=-0.3  &$\phi$=-0.5  \\
\hline \hline
$\sqrt{MSE}$ $\widetilde{d}_N^{IR}$    &0.163   &0.265    &0.640    &0.093    &0.102   &\textbf{0.109}
\\
\hline
$\sqrt{MSE}$ $\widetilde{d}_N^{MS}$  &0.138    &\textbf{0.148}    &\textbf{0.412}   &0.172    &0.163    &0.170
\\
\hline
$\sqrt{MSE}$ $\widehat d_N^{ADG}$  &\textbf{0.125}    &0.269   &0.679    &0.074   &\textbf{0.078}    &0.120
\\
\hline
$\sqrt{MSE}$ $\widetilde{d}_N^{WAV}$  &0.246   &0.411    &0.758   &\textbf{0.067}  &0.099    &0.133
\\
\hline

\end{tabular}\\
~\\\vspace{3mm}
\begin{tabular}{|c|c|c|c||c|c|c|}
\hline\hline
  $N=5000$  & $d=0$ & $d=0$ & $d=0$ &$d=1$ & $d=1$ & $d=1$  \\
ARFIMA(1,d,0) &$\phi$=-0.5 &$\phi$=-0.7 &$\phi$=-0.9 &$\phi$=-0.1 &$\phi$=-0.3  &$\phi$=-0.5  \\
\hline \hline
$\sqrt{MSE}$ $\widetilde{d}_N^{IR}$   &0.077    &0.106    &0.293    &\textbf{0.027}   &0.048    &0.062
\\
\hline
$\sqrt{MSE}$ $\widetilde{d}_N^{MS}$   &0.045    &\textbf{0.050}    &0.230   &0.046    &0.046    &0.040
\\
\hline
$\sqrt{MSE}$ $\widehat d_N^{ADG}$   &\textbf{0.043}    &0.085    &0.379    &0.031    &\textbf{0.032}    &\textbf{0.036}
\\
\hline
$\sqrt{MSE}$ $\widetilde{d}_N^{WAV}$    &0.080   &0.103    &\textbf{0.210}   &0.037    &0.044    &0.054
\\
\hline
\end{tabular}
\end{center}
}
\caption{{\small\label{Table1} : Comparison between $\widetilde{d}^{IR}_N$  and other famous semiparametric estimators of $d$ ($\widetilde{d}_N^{MS}$, $\widehat d_N^{ADG}$  and $\widetilde{d}_N^{WAV}$) applied to ARIMA$(1,d,0)$ process ($(1-B)^d(1+\phi\, B)\, X=\varepsilon$), with several $\phi$ and $N$ values}
}

\end{table}

\begin{table}[t]
{\footnotesize
\begin{center}
\begin{tabular}{|c|c|c|c|c||c|c|c|c|}
\hline\hline
$N=500$ &$d=-0.2$ &$d=0$ &$d=0.2$ &$d=0.4$  &$d=.6$ &$d=0.8$ &$d=1$ &$d=1.2$\\
ARFIMA(0,d,0) &&&&&&&&\\
\hline \hline
$\sqrt{MSE}$ $\widetilde{d}_N^{IR}$  &0.088    &0.092    &0.097    &0.096     &0.101    &0.101 &0.099    &0.105
\\
\hline
$\sqrt{MSE}$ $\widetilde{d}_N^{MS}$   &0.144    &0.134    &0.146    &0.152      &0.168    &0.175 &0.165    &0.157
\\
\hline
$\sqrt{MSE}$ $\widehat d_N^{ADG}$     &0.075    &\textbf{0.078}    &\textbf{0.080} &\textbf{0.084}      &\textbf{0.083}    &\textbf{0.079} &0.077    &0.081
\\
\hline
$\sqrt{MSE}$ $\widetilde{d}_N^{WAV}$   &\textbf{0.071}    &0.079    &0.087   &0.088   &0.087    &0.085 &\textbf{0.069}    &\textbf{0.076}
\\
\hline

\end{tabular}\\
~\\
\vspace{3mm}\begin{tabular}{|c|c|c|c|c||c|c|c|c|}
\hline\hline
$N=5000$ &$d=-0.2$ &$d=0$ &$d=0.2$ &$d=0.4$ &$d=.6$ &$d=0.8$ &$d=1$ &$d=1.2$\\
ARFIMA(0,d,0) &&&&&&&&\\
\hline \hline
$\sqrt{MSE}$ $\widetilde{d}_N^{IR}$   &0.037  &\textbf{0.025}   &\textbf{0.031}  &0.031     &0.035    &0.035 &0.038    &0.049
\\
\hline
$\sqrt{MSE}$ $\widetilde{d}_N^{MS}$    &0.043   &0.042    &0.043    &0.042     &0.055    &0.054 &0.046    &0.147
\\
\hline
$\sqrt{MSE}$ $\widehat d_N^{ADG}$    &0.034    &0.033    &0.032    &0.036     &0.033  &\textbf{0.032} &\textbf{0.033}    &\textbf{0.032}
\\
\hline
$\sqrt{MSE}$ $\widetilde{d}_N^{WAV}$   &\textbf{0.033}    &0.032   &\textbf{0.031}    &\textbf{0.023}     &\textbf{0.023}    &0.038 &0.039    &0.041
\\
\hline
\end{tabular}
\end{center}

\caption{{\small \label{Table2}: Comparison between $\widetilde{d}^{IR}_N$  and other famous semiparametric estimators of $d$ ($\widetilde{d}_N^{MS}$, $\widehat d_N^{ADG}$  and $\widetilde{d}_N^{WAV}$) applied to ARFIMA$(0,d,0)$ process, with several $d$ and $N$ values}
}
}
\end{table}

\begin{table}[t]
{\footnotesize
\begin{center}

\begin{tabular}{|c|c|c|c|c||c|c|c|c|}
\hline\hline
$N=500$ &&&&&&&&\\
ARFIMA(1,d,1) &$d=-0.2$ &$d=0$ &$d=0.2$ &$d=0.4$ &$d=.6$ &$d=0.8$ &$d=1$ &$d=1.2$\\
$\phi=-0.3$ ; $\theta=0.7$ &&&&&&&&\\
\hline \hline
$\sqrt{MSE}$ $\widetilde{d}_N^{IR}$  &0.152    &0.132   &0.125   &0.125       &0.118   &0.117 &0.111    &0.112
\\
\hline
$\sqrt{MSE}$ $\widetilde{d}_N^{MS}$   &0.138    &0.137    &0.144    &0.155    &0.161    &0.179 &0.172    &0.170
\\
\hline
$\sqrt{MSE}$ $\widehat d_N^{ADG}$ &\textbf{0.092}    &\textbf{0.088}    &\textbf{0.090}    &\textbf{0.097}      &\textbf{0.096}    &\textbf{0.087} &\textbf{0.087}    &\textbf{0.087}
\\
\hline
$\sqrt{MSE}$ $\widetilde{d}_N^{WAV}$    &0.173    &0.154   &0.152    &0.148   &0.139    &0.132 &0.105  &0.098
\\
\hline
\end{tabular} \\
~\\
\vspace{3mm}
\begin{tabular}{|c|c|c|c|c||c|c|c|c|}
\hline\hline
$N=5000$ &&&&&&&&\\
ARFIMA(1,d,1)  &$d=-0.2$ &$d=0$ &$d=0.2$ &$d=0.4$ &$d=.6$ &$d=0.8$ &$d=1$ &$d=1.2$\\
$\phi=-0.3$ ; $\theta=0.7$ &&&&&&&&\\
\hline \hline
$\sqrt{MSE}$ $\widetilde{d}_N^{IR}$    &0.070    &0.062    &0.053    &0.052       &0.052    &0.054 &0.059   &0.58\\
\hline
$\sqrt{MSE}$ $\widetilde{d}_N^{MS}$    &\textbf{0.038}   &0.042    &0.041    &0.050       &0.052    &0.054 &0.045    &0.150\\
\hline
$\sqrt{MSE}$ $\widehat d_N^{ADG}$  &0.039   &\textbf{0.035}  &\textbf{0.033} &\textbf{0.037}    &\textbf{0.038}  &\textbf{0.037} &\textbf{0.035}    &\textbf{0.033}\\
\hline
$\sqrt{MSE}$ $\widetilde{d}_N^{WAV}$    &0.049   &0.057    &0.056    &0.053      &0.051    &0.050 &0.048    &0.050\\
\hline
\end{tabular}
\end{center}
}
\caption{{\small\label{Table3}:  Comparison between $\widetilde{d}^{IR}_N$  and other famous semiparametric estimators of $d$ ($\widetilde{d}_N^{MS}$, $\widehat d_N^{ADG}$  and $\widetilde{d}_N^{WAV}$) applied to ARFIMA$(1,d,1)$ process (with $\phi=-0.3$ and $\theta=0.7$), with several $d$ and $N$ values.}
}
\end{table}

\begin{table}[t]
{\footnotesize
\begin{center}
\begin{tabular}{|c|c|c|c|c||c|c|c|c|c|}
\hline\hline
$N=500$  &$d=-0.2$ &$d=0$ &$d=0.2$ &$d=0.4$ &$d=.6$ &$d=0.8$ &$d=1$ &$d=1.2$\\
$X^{(d,\beta)}$ &&&&&&&&\\
\hline \hline
$\sqrt{MSE}$ $\widetilde{d}_N^{IR}$  &\textbf{0.140}   &\textbf{0.170}  &0.201   &0.211     &0.209 &0.205 &0.210    & 0.202\\
\hline
$\sqrt{MSE}$ $\widetilde{d}_N^{MS}$   &0.187    &0.188    &0.204   &0.200   &0.192    &0.187 &0.200   &0.192\\
\hline
$\sqrt{MSE}$ $\widehat d_N^{ADG}$   &0.177   &0.182    &\textbf{0.190}    &\textbf{0.184}     &\textbf{0.174}    &\textbf{0.179} &0.196    &0.189\\
\hline
$\sqrt{MSE}$ $\widetilde{d}_N^{WAV}$   &0.224    &0.225    &0.230    &0.220      &0.213    &0.199&\textbf{0.185}   &\textbf{0.175}\\
\hline
\end{tabular}\\
~\\
\vspace{3mm}
\begin{tabular}{|c|c|c|c|c||c|c|c|c|}
\hline\hline
$N=5000$&$d=-0.2$ &$d=0$ &$d=0.2$ &$d=0.4$ &$d=.6$ &$d=0.8$ &$d=1$ &$d=1.2$\\
$X^{(d,\beta)}$ &&&&&&&&\\
\hline \hline
$\sqrt{MSE}$ $\widetilde{d}_N^{IR}$  &\textbf{0.110}  &0.139   &0.150    &0.151     &0.152    &0.153 &0.152    &\textbf{0.142}
\\
\hline
$\sqrt{MSE}$ $\widetilde{d}_N^{MS}$ &0.120    &\textbf{0.123}   &\textbf{0.132}    &\textbf{0.131}     &\textbf{0.132}    &\textbf{0.127} &\textbf{0.134}    &0.155
\\
\hline
$\sqrt{MSE}$ $\widehat d_N^{ADG}$   &0.139    &0.138    &0.141    &0.134      &0.134    &0.140 &0.140   &0.145
\\
\hline
$\sqrt{MSE}$ $\widetilde{d}_N^{WAV}$   &0.170    &0.173    &0.167    &0.165      &0.167    &0.166 &0.164    &0.150
\\
\hline
\end{tabular}
\end{center}

\caption{\small {\label{Table4}: Comparison between $\widetilde{d}^{IR}_N$  and other famous semiparametric estimators of $d$ ($\widetilde{d}_N^{MS}$, $\widehat d_N^{ADG}$  and $\widetilde{d}_N^{WAV}$) applied to $X^{(d,\beta)}$ process with several $d$ and $N$ values.}
}
}
\end{table}
~\\
\underline{{\bf Conclusions of simulations:}} In almost $50 \%$ of cases (especially for $N=500$), the estimator  $\widehat d_N^{ADG}$  provides the smallest $\sqrt{MSE}$ among the $4$ semiparametric estimators even if this estimator is not an adaptive estimator (the bandwidth $m$ is fixed to be $N^{0.65}$, which should theretically be a problem when $2\beta(2\beta+1)^{-1}<0.65$, {\it i.e.} $\beta<13/14$). However, even for the process $X^{(d,\beta)}$ with $\beta=0.5$, the estimator $\widehat d_N^{ADG}$ provides not so bad results (when $N=5000$, note that $\widehat d_N^{ADG}$ never provides the best results contrarly to what happens with the $3$ other processes sayisfying $\beta=2$). Some additional simulations, not reported here, realized with $N=10^5$ always for $X^{(d,\beta)}$ with $\beta=0.5$, show that the $\sqrt{MSE}$  of $\widehat d_N^{ADG}$ becomes the worst (the largest) among the $\sqrt{MSE}$ of the $4$ other estimators. The estimator $\widetilde d_N^{IR}$ provide convincing results, almost the same performances than the other adaptive estimators $\widetilde{d}_N^{MS}$  and $\widetilde{d}_N^{WAV}$. 

\subsubsection*{Comparison of MIR tests $\widetilde S_N$ and $\widetilde T_N$  with other famous stationarity or nonstationarity tests}
Monte-Carlo experiments were done for evaluating the performances of new tests $\widetilde S_N$ and $\widetilde T_N$ and for comparing them to most famous stationarity tests (LMC and V/S, V/S replacing KPSS) or nonstationarity (ADF and PP) tests (see more details on these tests in the previous section). \\
We also defined a stationarity and nonstationarity test based on  the extended local Whittle estimator $\widehat d_N^{ADG}$ following the results obtained in Abadir {\it et al.} (2007) (a very simple central limit theorem was stated in Corollary 2.1). Then, for instance, the stationarity test $\widehat S_{ADG}$ is defined by 
$$
\widehat S^{ADG}:= \1_{ \widehat d_N^{ADG}>0.5+\frac 1 2 \,   q_{1-\alpha} \,  \frac {1}{\sqrt{ m}}},
$$
with $m=N^{0.65 }$ (and the nonstationarity test $\widehat T_{ADG}$ is built following the same trick).
\begin{itemize}
\item $k=0$ for LMC test;
\item $k=\sqrt n$ for V/S test;
\item $k=\Big [(n-1)^{1/3}\Big ]$ for ADF test;
\item  $k=\Big [4\, \big (\frac n {100} \big )^{1/4}\Big ]$ for PP test;
\end{itemize}
The results of these simulations  with a type I error classically chosen to $0.05$ are provided in Tables \ref{Table5}, \ref{Table6}, \ref{Table7} and \ref{Table8}.
\\

\begin{table}
{\footnotesize
\begin{center}
\begin{tabular}{|c|c|c|c||c|c|c|}
\hline\hline
 $N=500$  & $d=0$ & $d=0$  & $d=0$ &$d=1$ & $d=1$ & $d=1$  \\
ARIMA$(1,d,0)$ &$\phi$=-0.5 &$\phi$=-0.7 &$\phi$=-0.9 &$\phi$=-0.1 &$\phi$=-0.3  &$\phi$=-0.5 \\
\hline \hline
$\widetilde S_N$: Accepted $H_{0}$   &1   &1    &0.37    &0    &0    &0 \\

$\widehat S_{ADG}$: Accepted $H_{0}$   &1    &1    &0.25    &0    &0    &0
\\
LMC: Accepted $H_{0}$     &0.97    &1    &0.84    &0.02    &0         &0
\\
$V/S$ : Accepted $H_{0}$    &0.96    &0.93         &0.84    &0.09         &0.08         &0.12\\
\hline
\hline
$\widetilde T_N$: Rejected $H'_{0}$    &0.99        &0.77         &0.08         &0         &0         &0
\\
$\widehat T_{ADG}$: Rejected $H'_{0}$   &1         &0.94         &0         &0         &0         &0
\\
ADF: Rejected $H'_{0}$    &1    &1    &1   &0.06    &0.04    &0.04 \\

PP : Rejected $H'_{0}$    &1    &1    &1    &0.06    &0.03    &0.02\\
\hline
\hline
\end{tabular}\\
~\\
\begin{tabular}{|c|c|c|c||c|c|c|}
\hline\hline
 $N=5000$  & $d=0$ & $d=0$  & $d=0$ &$d=1$ & $d=1$ & $d=1$   \\
ARIMA$(1,d,0)$ &$\phi$=-0.5 &$\phi$=-0.7 &$\phi$=-0.9 &$\phi$=-0.1 &$\phi$=-0.3  &$\phi$=-0.5 \\
\hline \hline
$\widetilde S_N$: Accepted $H_{0}$   &1   &1    &0.91    &0    &0    &0 \\

$\widehat S_{ADG}$: Accepted $H_{0}$   &1    &1    &1   &0    &0    &0
\\
LMC: Accepted $H_{0}$     &0.95   &1    &1    &0   &0         &0
\\
$V/S$ : Accepted $H_{0}$    &0.93   &0.97         &0.90    &0        &0         &0\\
\hline
\hline
$\widetilde T_N$: Rejected $H'_{0}$   &1        &1         &0.87         &0         &0         &0
\\
$\widehat T_{ADG}$: Rejected $H'_{0}$     &1         &1         &0.95         &0         &0         &0
\\
ADF: Rejected $H'_{0}$    &1    &1    &1    &0.09    &0.01    &0.04 \\

PP : Rejected $H'_{0}$    &1    &1    &1    &0.07    &0.01    &0.04
\\
\hline
\hline
\end{tabular}
\end{center}
}
\caption{{\small \label{Table5} Comparisons of stationarity and nonstationarity tests from $300$ independent replications of ARIMA$(1,d,0)$ processes ($X_t+\phi X_{t-1}=\varepsilon_t$) for  several values of $\phi$ and $N$. The accuracy of tests is measured by the frequencies of  trajectories ``accepted as stationary'' (accepted $H_0$ or rejected $H'_0$) among the $300$ replications which should be close to $1$ for $d\in(-0.5,0.5)$ and close to $0$ for $d\in[0.5,1.2]$}
}
\end{table}

\begin{table}
{\footnotesize
\begin{center}
\begin{tabular}{|c|c|c|c|c||c|c|c|c|}
\hline\hline
$N=500$ &&&&&&&&\\
ARFIMA$(0,d,0)$ &$d=-0.2$ &$d=0$ &$d=0.2$ &$d=0.4$ &$d=0.6$ &$d=0.8$ &$d=1$ &$d=1.2$ \\
\hline \hline
$\widetilde S_N$: Accepted $H_{0}$    &1    &1   &1    &1    &0.72    &0.09
&0.01         &0 \\
$\widehat S_{ADG}$: Accepted $H_{0}$   &1    &1    &1    &1    &0.53    &0.02
&0    &0\\

LMC: Accepted $H_{0}$  &0   &0.06  &0.75  &1     &1         &1 &0.52  &0
\\
$V/S$ : Accepted $H_{0}$   &1    &0.97    &0.81    &0.51   &0.30    &0.20
&0.09    &0.05\\
\hline\hline
$\widetilde T_N$: Rejected $H'_{0}$   &1   &1    &0.97   &0.53    &0.02         &0
&0         &0
\\
$\widehat T_{ADG}$: Rejected $H'_{0}$    &1    &1    &0.99    &0.48    &0.01         &0
&0         &0 \\
ADF: Rejected $H'_{0}$     &1    &1    &1    &0.98    &0.60    &0.24
&0.06         &0.01\\
PP : Rejected $H'_{0}$    &1    &1    &1    &1    &0.90    &0.43
&0.05         &0\\
\hline
\hline
\end{tabular}\\
~\\
\vspace{3mm}
\begin{tabular}{|c|c|c|c|c||c|c|c|c|}
\hline\hline
$N=5000$ &&&&&&&&\\
ARFIMA$(0,d,0)$ &$d=-0.2$ &$d=0$ &$d=0.2$ &$d=0.4$ &$d=0.6$ &$d=0.8$ &$d=1$ &$d=1.2$ \\
\hline \hline
$\widetilde S_N$: Accepted $H_{0}$    &1    &1    &1    &1   &0.08    &0
&0         &0 \\
$\widehat S_{ADG}$: Accepted $H_{0}$   &1    &1    &1   &1    &0.06    &0
&0    &0\\
LMC: Accepted $H_{0}$  & 0  &0.05   &0.97   &1     &1        &1 &0.53    &0
\\
$V/S$ : Accepted $H_{0}$   &1    &0.95    &0.50    &0.17    &0.05    &0
&0    &0\\
\hline\hline
$\widetilde T_N$: Rejected $H'_{0}$   &1   &1    &1    &0.94   &0         &0
&0         &0
\\
$\widehat T_{ADG}$: Rejected $H'_{0}$    &1   &1    &1    &0.89    &0         &0
&0         &0 \\
ADF: Rejected $H'_{0}$     &1    &1    &1    &1    &0.88    &0.53
&0.07         &0\\
PP : Rejected $H'_{0}$    &1    &1    &1    &1    &1    &0.75
&0.07         &0\\
\hline
\hline
\end{tabular}
\end{center}
}

\caption{{\small\label{Table6}Comparisons of stationarity and nonstationarity tests from $300$ independent replications of ARFIMA$(0,d,0)$ processes for  several values of $d$ and $N$. The accuracy of tests is measured by the frequencies of  trajectories ``accepted as stationary'' (accepted $H_0$ or rejected $H'_0$) among the $300$ replications which should be close to $1$ for $d\in(-0.5,0.5)$ and close to $0$ for $d\in[0.5,1.2]$}
}
\end{table}

\begin{table}
{\footnotesize
\begin{center}
\begin{tabular}{|c|c|c|c|c||c|c|c|c|}
\hline\hline
$N=500$ &&&&&&&&\\
ARFIMA$(1,d,1)$ &$d=-0.2$ &$d=0$ &$d=0.2$ &$d=0.4$ &$d=0.6$ &$d=0.8$ &$d=1$ &$d=1.2$ \\
$\phi=-0.3$ ; $\theta=0.7$ &&&&&&&&\\
\hline \hline
$\widetilde S_N$: Accepted $H_{0}$    &1    &1   &1    &0.95    &0.47    &0.11
&0.01         &0 \\
$\widehat S_{ADG}$: Accepted $H_{0}$   &1    &1    &1    &0.98    &0.31    &0
&0    &0\\
LMC: Accepted $H_{0}$  &0.12   &0  &0   &0    &0     &0 &0  &0
\\
$V/S$ : Accepted $H_{0}$   &1    &0.96    &0.78    &0.54   &0.34    &0.18
&0.09    &0.05\\
\hline\hline
$\widetilde T_{N}$: Rejected $H'_{0}$   &1   &1    &0.84    &0.23    &0.01         &0
&0         &0
\\
$\widehat T_{ADG}$: Rejected $H'_{0}$    &1    &1    &0.96    &0.21    &0         &0
&0         &0 \\
ADF: Rejected $H'_{0}$     &1    &1    &1    &0.96    &0.59    &0.26
&0.05         &0.01\\
PP : Rejected $H'_{0}$    &1    &1    &1    &1    &0.74    &0.30
&0.03         &0\\
\hline
\hline
\end{tabular}\\
~\\
\vspace{3mm}
\begin{tabular}{|c|c|c|c|c||c|c|c|c|}
\hline\hline
$N=5000$ &&&&&&&&\\
ARFIMA$(1,d,1)$ &$d=-0.2$ &$d=0$ &$d=0.2$ &$d=0.4$ &$d=0.6$ &$d=0.8$ &$d=1$ &$d=1.2$ \\
$\phi=-0.3$ ; $\theta=0.7$ &&&&&&&&\\
\hline \hline
$\widetilde S_N$: Accepted $H_{0}$    &1    &1    &1    &0.99   &0.12    &0
&0         &0 \\
$\widehat S_{ADG}$: Accepted $H_{0}$   &1    &1    &1   &1    &0.04    &0
&0    &0\\
LMC: Accepted $H_{0}$  &0    &0  &0 &0    &0      &0&0   &0
\\
$V/S$ : Accepted $H_{0}$   &1    &0.95    &0.61    &0.22    &0.07    &0
&0.01    &0\\
\hline\hline
$\widetilde T_{N}$: Rejected $H'_{0}$   &1   &1    &1    &0.67   &0.01         &0
&0         &0
\\
$\widehat T_{ADG}$: Rejected $H'_{0}$    &1   &1    &1    &0.86    &0         &0
&0         &0 \\
ADF: Rejected $H'_{0}$     &1    &1    &1    &1    &0.91    &0.45
&0.04         &0\\
PP : Rejected $H'_{0}$    &1    &1    &1    &1    &0.99    &0.59
&0.03         &0\\
\hline
\hline
\end{tabular}
\end{center}
}
\caption{{\small \label{Table7} Comparisons of stationarity and nonstationarity tests from $300$ independent replications of ARFIMA$(1,d,1)$  processes (with $\phi=-0.3$ and $\theta=0.7$) for  several values of $d$ and $N$. The accuracy of tests is measured by the frequencies of  trajectories ``accepted as stationary'' (accepted $H_0$ or rejected $H'_0$) among the $300$ replications which should be close to $1$ for $d\in(-0.5,0.5)$ and close to $0$ for $d\in[0.5,1.2]$}
}
\end{table}

\begin{table}
{\footnotesize
\begin{center}
\begin{tabular}{|c|c|c|c|c||c|c|c|c|}
\hline\hline
$N=500$ &&&&&&&&\\
$X^{(d,\beta)}$ &$d=-0.2$ &$d=0$ &$d=0.2$ &$d=0.4$ &$d=0.6$ &$d=0.8$ &$d=1$ &$d=1.2$ \\
\hline \hline
$\widetilde S_N$: Accepted $H_{0}$    &1    &1   &1    &1    &0.99    &0.49
&0.05         &0.01 \\
$\widehat S_{ADG}$: Accepted $H_{0}$   &1    &1    &1    &1    &0.98    &0.34
&0.01    &0\\
LMC: Accepted $H_{0}$  &0 &0 &0.13   &0.86   &1    &1         &0.82 &0 
\\
$V/S$ : Accepted $H_{0}$   &1    &1    &0.93    &0.69   &0.42    &0.25
&0.16    &0.09\\
\hline\hline
$\widetilde T_{N}$: Rejected $H'_{0}$   &1   &1    &1  &0.93    &0.37         &0
&0         &0
\\
$\widehat T_{ADG}$: Rejected $H'_{0}$    &1    &1    &1    &0.98   &0.23        &0
&0         &0 \\
ADF: Rejected $H'_{0}$     &1    &1    &1    &1    &1    &0.88
&0.38         &0.11\\
PP : Rejected $H'_{0}$    &1    &1    &1    &1    &0.90    &0.43
&0.05         &0\\
\hline
\hline
\end{tabular}\\
~\\
\vspace{3mm}
\begin{tabular}{|c|c|c|c|c||c|c|c|c|}
\hline\hline
$N=5000$ &&&&&&&&\\
$X^{(d,\beta)}$ &$d=-0.2$ &$d=0$ &$d=0.2$ &$d=0.4$ &$d=0.6$ &$d=0.8$ &$d=1$ &$d=1.2$ \\
\hline \hline
$\widetilde S_N$: Accepted $H_{0}$    &1    &1    &1    &1   &1    &0.03
&0         &0 \\
$\widehat S_{ADG}$: Accepted $H_{0}$   &1    &1    &1   &1    &0.99    &0
&0    &0\\
LMC: Accepted $H_{0}$  &0&0   &0.39   &1    &1         &1 &1   &0
\\
$V/S$ : Accepted $H_{0}$   &1    &0.99    &0.79    &0.29    &0.11    &0.04
&0    &0\\
\hline\hline
$\widetilde T_N$: Rejected $H'_{0}$   &1   &1    &1    &0.99   &0.82         &0
&0         &0
\\
$\widehat T_{ADG}$: Rejected $H'_{0}$    &1   &1    &1    &1    &0.30         &0
&0         &0 \\
ADF: Rejected $H'_{0}$     &1    &1    &1    &1    & 1    &0.98
&0.34         &0.01\\
PP : Rejected $H'_{0}$    &1    &1    &1    &1    &1    &1
&0.50         &0.01\\
\hline
\hline
\end{tabular}
\end{center}
}
\caption{{ \small \label{Table8}Comparisons of stationarity and nonstationarity tests from $300$ independent replications of $X^{(d,\beta)}$ processes for several values of $d$ and $N$. The accuracy of tests is measured by the frequencies of  trajectories ``accepted as stationary'' (accepted $H_0$ or rejected $H'_0$) among the $300$ replications which should be close to $1$ for $d\in(-0.5,0.5)$ and close to $0$ for $d\in[0.5,1.2]$}
}
\end{table}
~\\
\underline{{\bf Conclusions of simulations:}} From their constructions, KPSS and LMC, V/S (or KPSS), ADF and PP tests should asymptotically decide the stationarity hypothesis when $d=0$, and the nonstationarity hypothesis when $d>0$. It was exactly what we observe in these simulations. For ARIMA$(p,d,0)$ processes with $d=0$ or $d=1$ ({\it i.e.} AR$(1)$ process when $d=0$), LMC, V/S, ADF and PP  tests are more accurate than our adaptive MIR tests or tests based on $\widehat d_N^{ADG}$, especially when $N=500$. But when $N=5000$ the tests computed from $\widetilde d_N^{IR}$ and $\widehat d_N^{ADG}$ provide however convincing results.\\
In case of processes with $d\in (0,1)$, the tests computed from $\widetilde d_N^{IR}$ and $\widehat d_N^{ADG}$ are clearly better performances than than classical stationarity tests ADF or PP which accept the nonstationarity assumption $H_0'$ even if the processes are stationary when $0<d<0.5$ for instance. The results obtained with the LMC test are not at all satisfying even when another lag parameter is chosen. The case of the V/S test is different since this test is built for distinguishing between short and long memory processes. Note that a renormlized version of this test has been defined in Giraitis {\it et al.} (2006) for also taking account of the value of $d$.

\subsection{Application to the the Stocks and the Exchange Rate Markets}

We applied the adaptive MIR statistics as well as the other famous long-memory estimators and stationarity tests to Econometric data, the Stocks and Exchange Rate Markets. More precisely, the $5$ following daily closing value time series are considered:
\begin{enumerate}
\item The USA Dollar Exchange rate in Deusch-Mark, from $11/10/1983$ to $08/04/2011$ ($7174$ obs.).
\item The USA Dow Jones Transportation Index, from $31/12/1964$ to $08/04/2011$ ($12072$ obs.).
\item The USA Dow Jones Utilities Index, from $31/12/1964$ to $08/04/2011$ ($12072$ obs.).
\item The USA Nasdaq Industrials Index, from $05/02/1971$ to $08/04/2011$ ($10481$ obs.).
\item The Japan Nikkei225A Index, from $03/04/1950$ to $8/04/2011$ ($15920$ obs.).
\end{enumerate}
We considered the log-return of this data and tried to test their stationarity properties. Since stationarity or nonstationarity tests are not able to detect (offline) changes, we first used an algorithm developed by M. Lavielle for detecting changes (this free software can be downloaded from his homepage: {\tt http://www.math.u-psud.fr/$\sim$lavielle/programmes$\overline{~}$lavielle.html}). This algorithm provides the choice of detecting changes in mean, in variance, ..., and we chose to detect parametric changes in the distribution. Note that the number of changes is also estimated since this algorithm is based on the minimization of a penalized contrast. We obtained for each time series an estimated number of changes equal to $2$ which are  the following:
\begin{itemize}
\item Two breaks points for the US dollar-Deutsch Mark Exchange rate return are estimated, corresponding to the dates: 21/08/2006 and 24/12/2007.
The Financial crisis of 2007-2011, followed by the late 2000s recession and the 2010 European sovereign debt crisis can cause
such breaks.
\item Both the breaks points estimated for the US Dow Jones Transportation Index return, of the New-York Stock Market, correspond to the dates: 17/11/1969 and 15/09/1997. The first break change can be a consequence on transportation companies difficulties the American Viet-Nam war against communist block. The second change point can be viewed as a contagion by the spread of the Thai crisis in 1997 to other countries and mainly the US stock Market.
\item Both the breaks points estimated for the US Dow Jones Utilities Index return correspond to the dates: 02/06/1969 and 14/07/1998.
    The same arguments as above can justify the first break. The second at 1998 is probably a consequence of ``\emph{the long very acute crisis in the bond markets,..., the dramatic fiscal crisis and Russian Flight to quality caused by it, may have been warning the largest known by the global financial system: we never went too close to a definitive breakdown of relations between the various financial instruments}``(Wikipedia).
\item The two breaks points for the  US Nasdaq Industrials Index return correspond to the  dates: 17/07/1998 and 27/12/2002. The first break at 1998 is explained by the Russian flight to quality as above. The second break at 2002 corresponds to the Brazilian public debt crisis of 2002 toward foreign owners (mainly the U.S. and the IMF) which implicitly assigns a default of payment probability close to 100\%  with a direct impact on the financial markets indexes as the Nasdaq.
\item Both the breaks points estimated for the Japanese Nikkei225A Index return corresponds to the dates 29/10/1975 and 12/02/1990, perhaps as consequence of the strong dependency of Japan to the middle east Oil following 1974 or anticipating 1990 oil crisis. The credit crunch which is seen as a major factor in the U.S. recession of 1990-91 can play a role in the second break point.
\end{itemize}
Data and estimated instant breaks can be seen on Figure \ref{ThreeRegimes}. Then, we applied the estimators and tests described in the previous subsection on trajectories obtained in each stages for the $5$ economic time series. These applications were done on the log-returns, their absolute values, their squared values and their $\theta$-power laws with $\theta$ maximized for each LRD estimators. The results of these numerical experiments can be seen in Tables \ref{Table11}-\ref{Table15}. \\
~\\
\underline{{\bf Conclusions of numerical experiments:}} We exhibited again the well known result: the log-returns are stationary and short memory processes while absolute values or power $\theta$ of log-returns are generally stationary but long memory processes (for this conclusion, we essentially consider the results of $\widetilde S_N$, $\widetilde T_N$ and V/S tests since the other tests have been shown not to be relevant in the cases of long-memory processes). However the last and third estimated stage of each time series provides generally the largest estimated values of the memory parameter $d$ (for power law of log-returns) which are close to $0.5$; hence, for Nasdaq time series, we accepted the nonstationarity assumption.
\begin{figure}
\includegraphics[height=3.9cm,width=8cm]{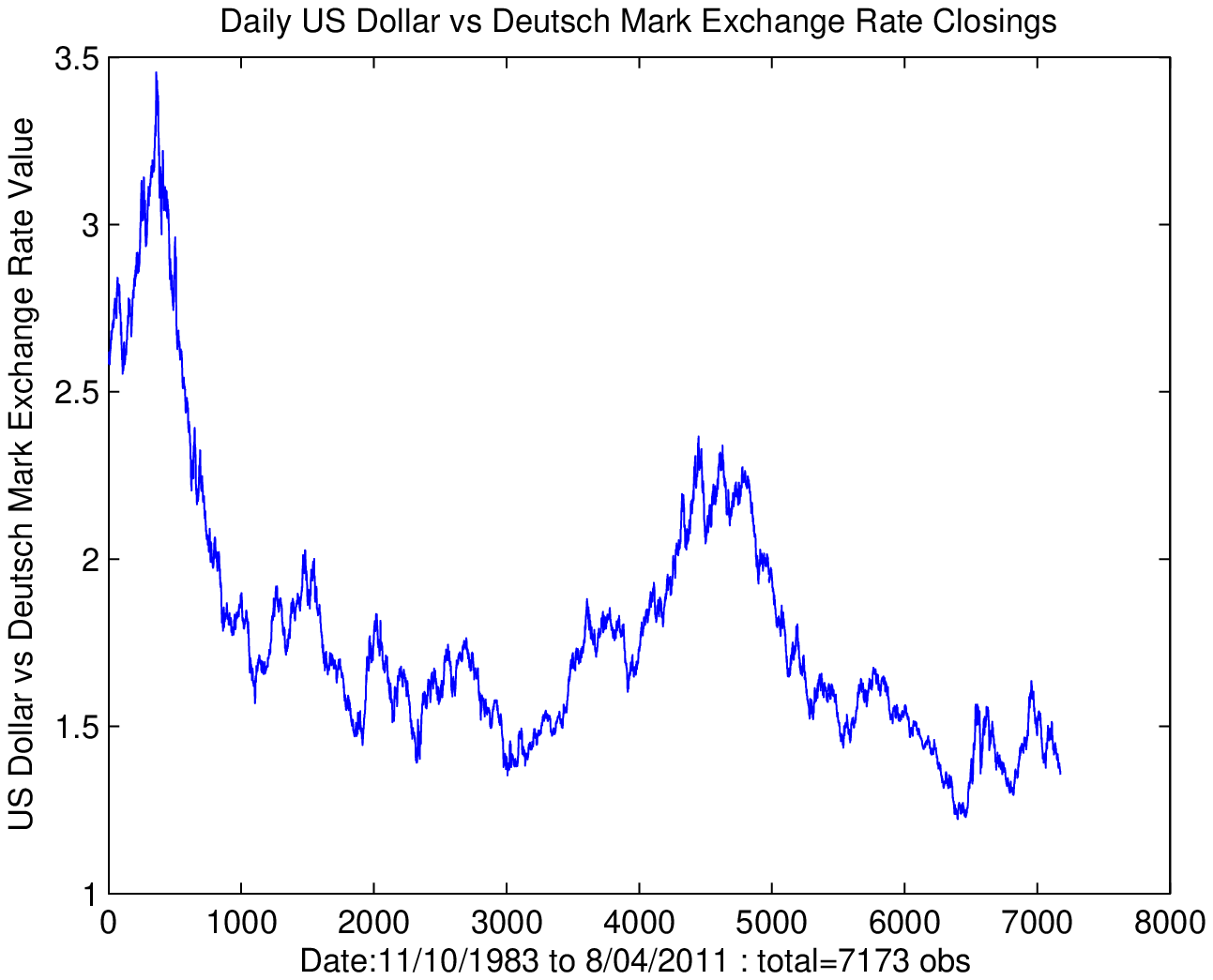}\hspace*{-1.3cm}
\includegraphics[height=3.9cm,width=7cm]{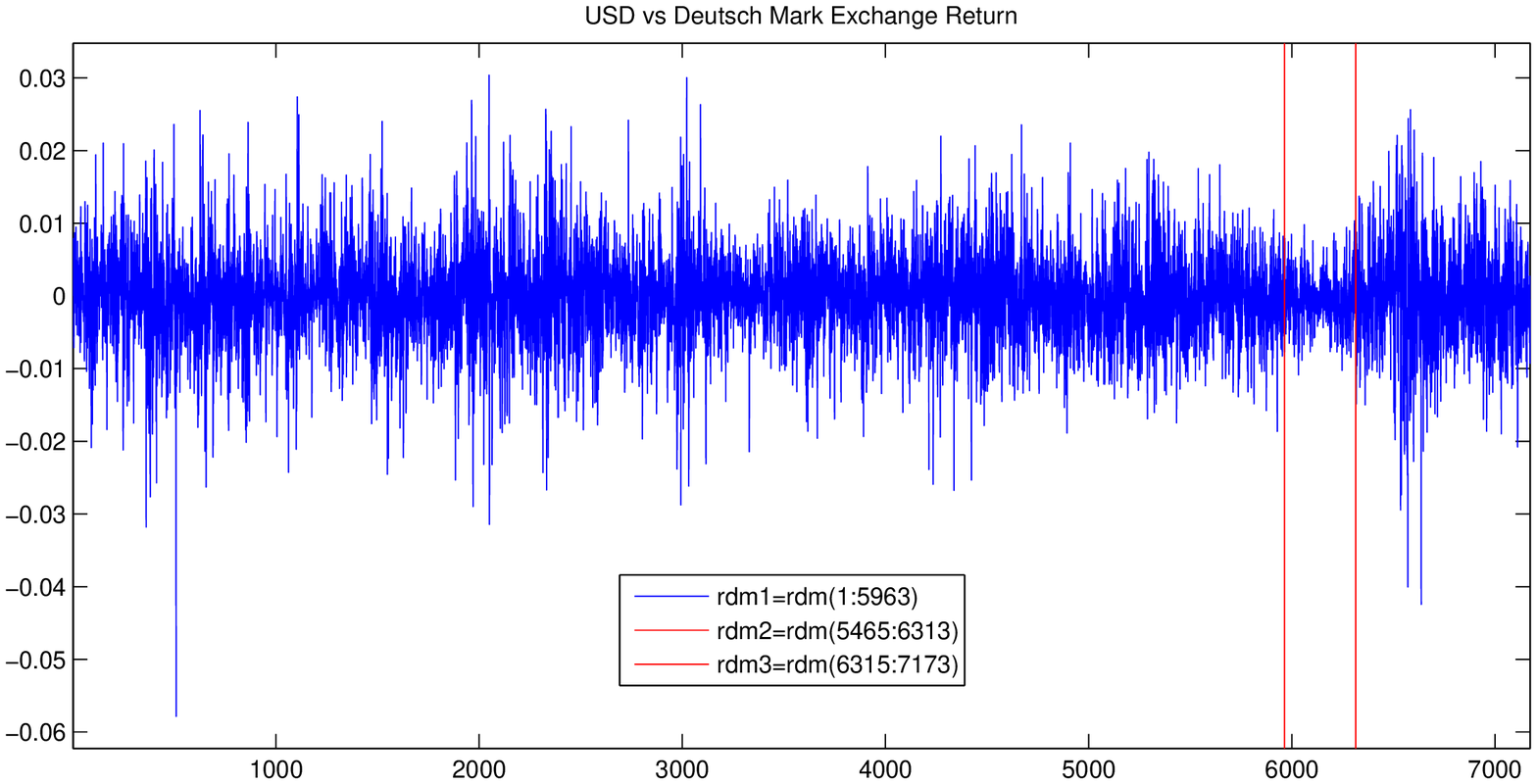}
\includegraphics[height=3.9cm,width=8cm]{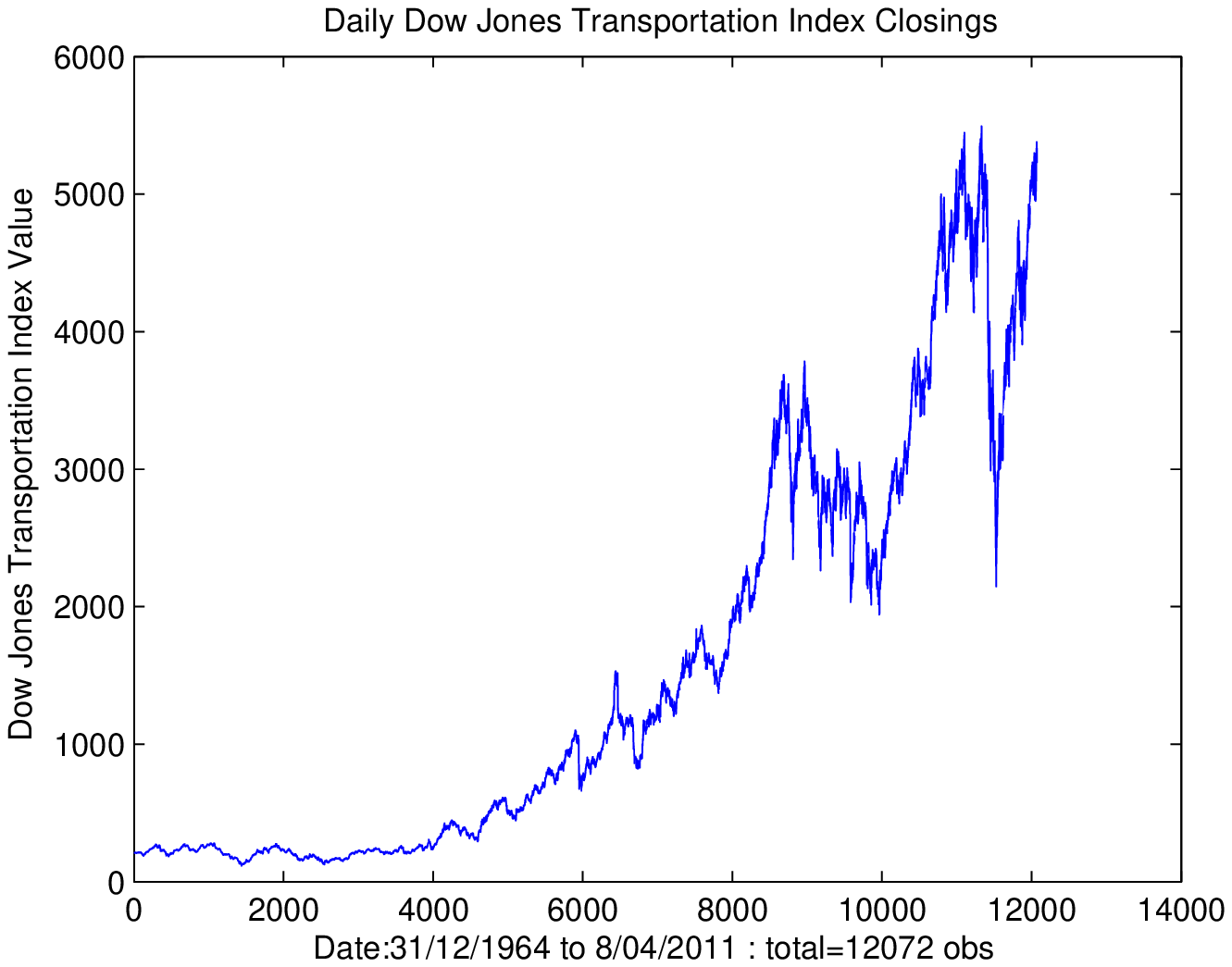}\hspace{-0.1cm}
\includegraphics[height=3.9cm,width=8cm]{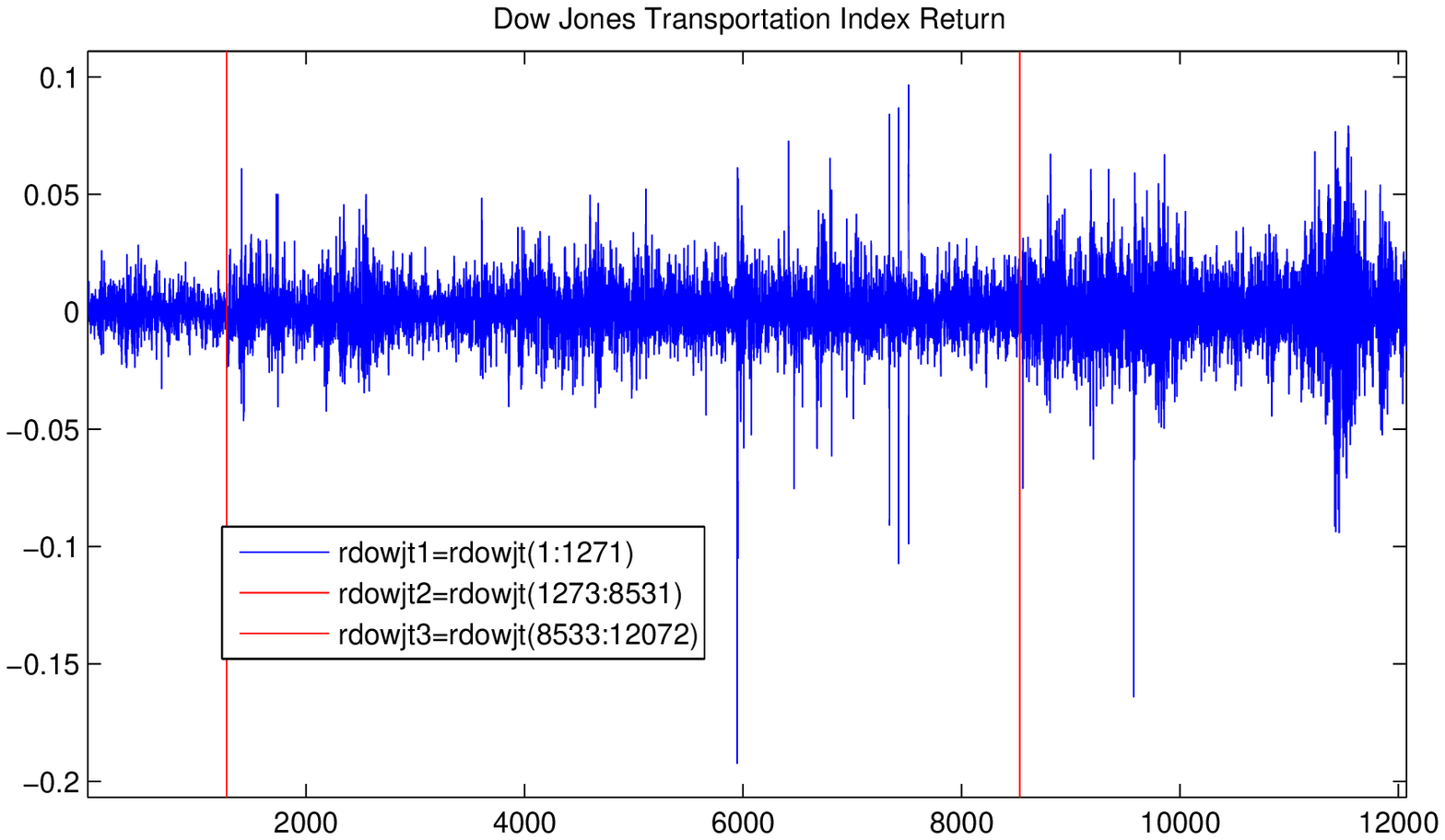}
\includegraphics[height=3.9cm,width=8cm]{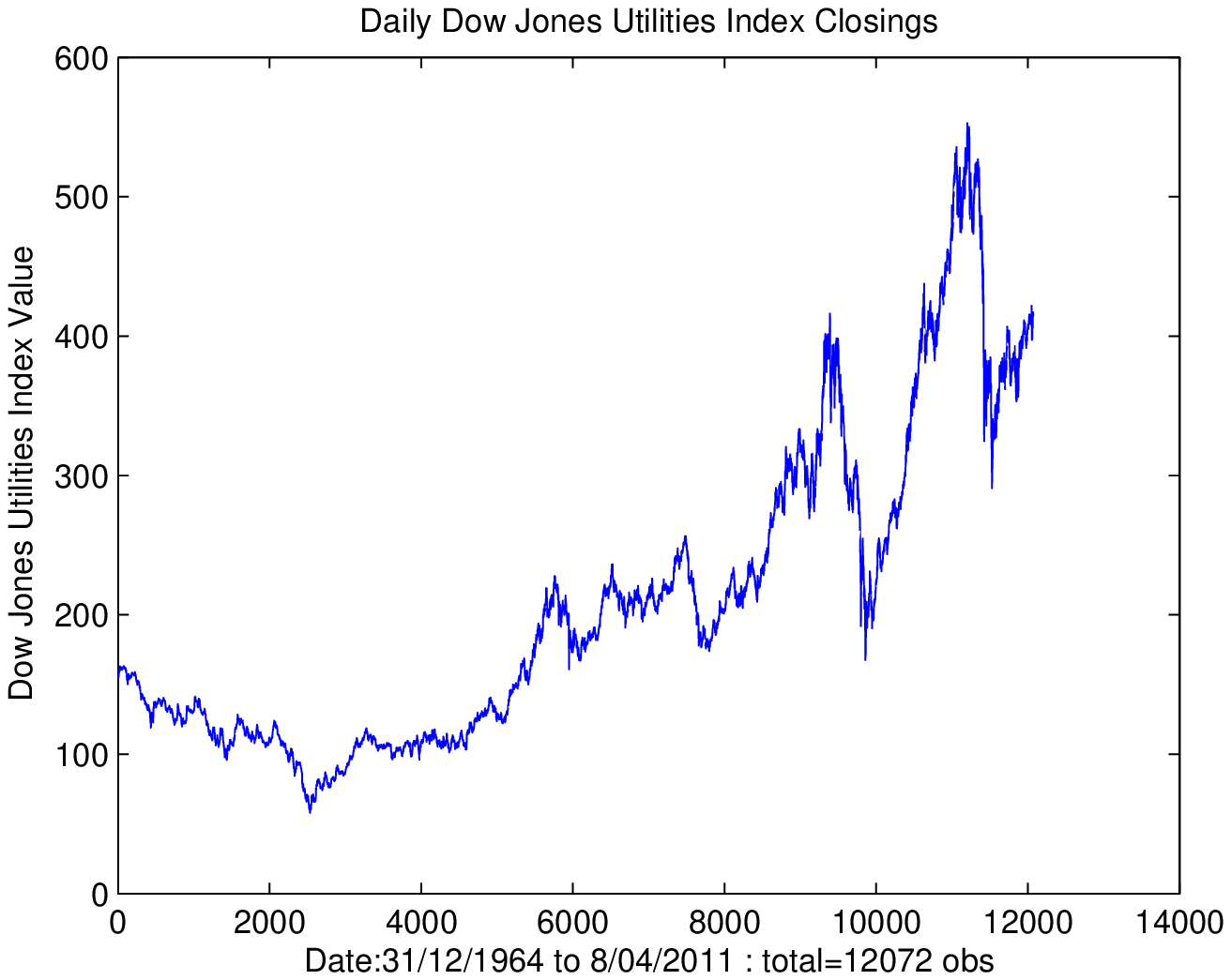}\hspace{-0.1cm}
\includegraphics[height=3.9cm,width=8cm]{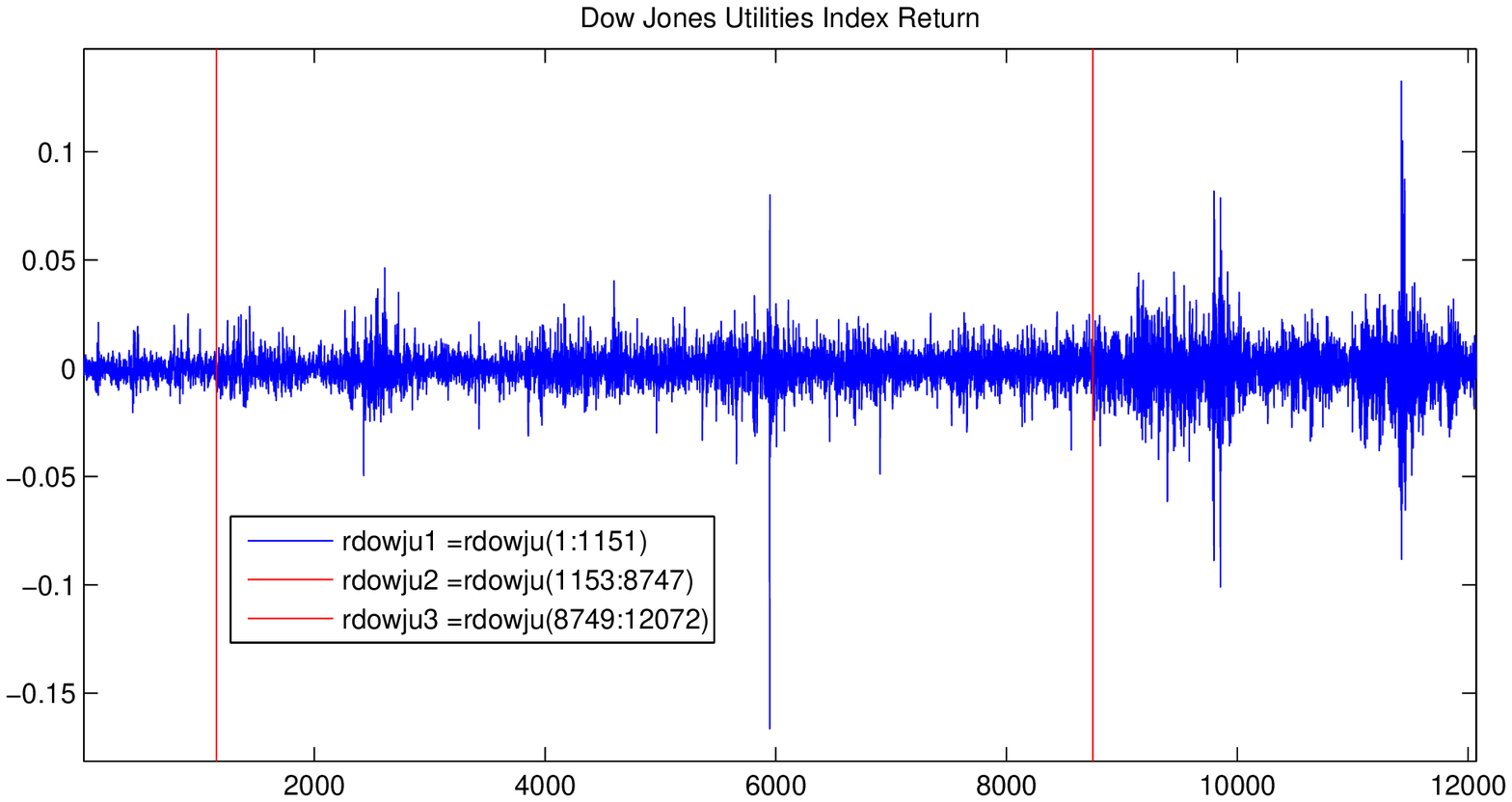}
\includegraphics[height=3.9cm,width=8cm]{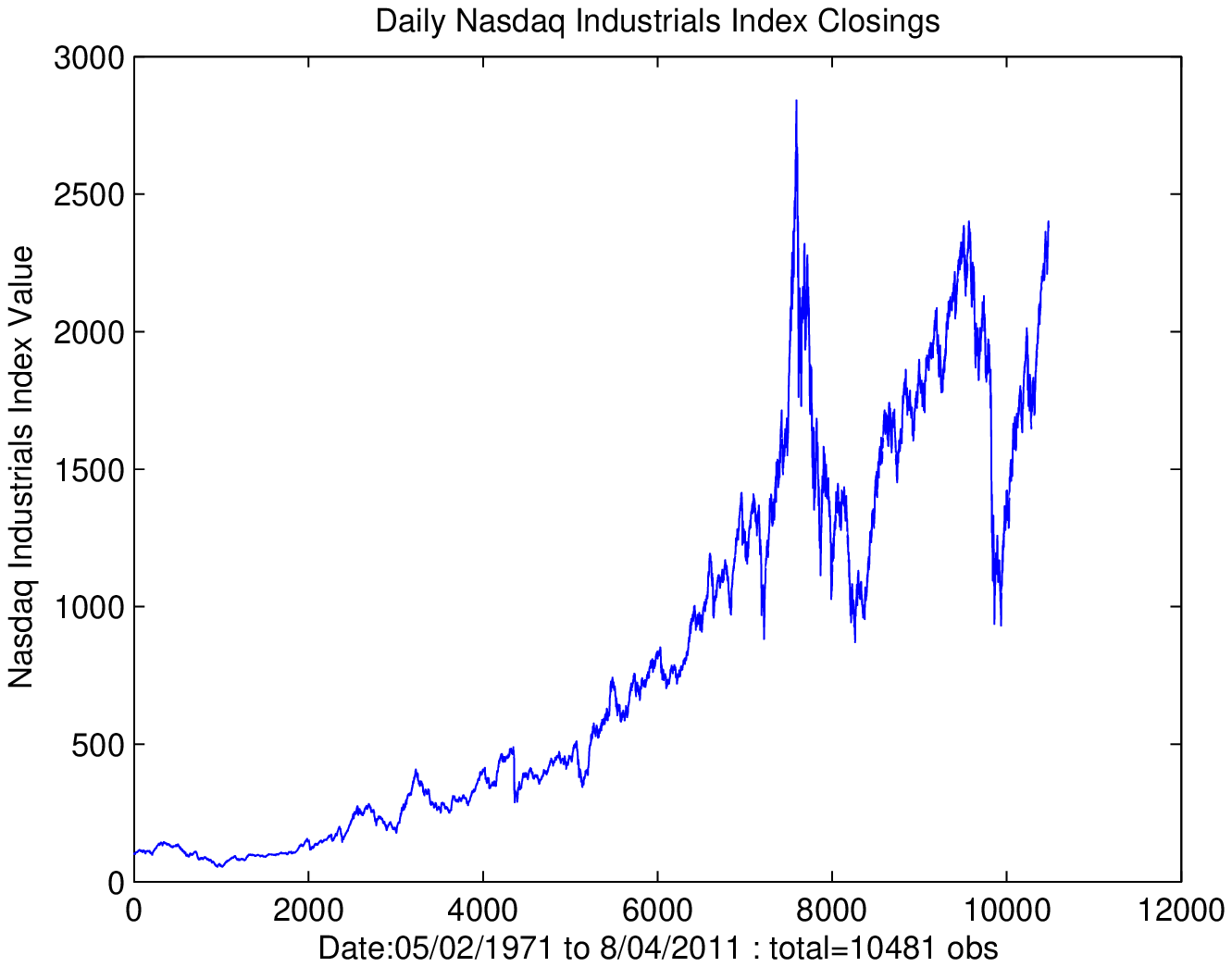}\hspace{-0.1cm}
\includegraphics[height=3.9cm,width=8cm]{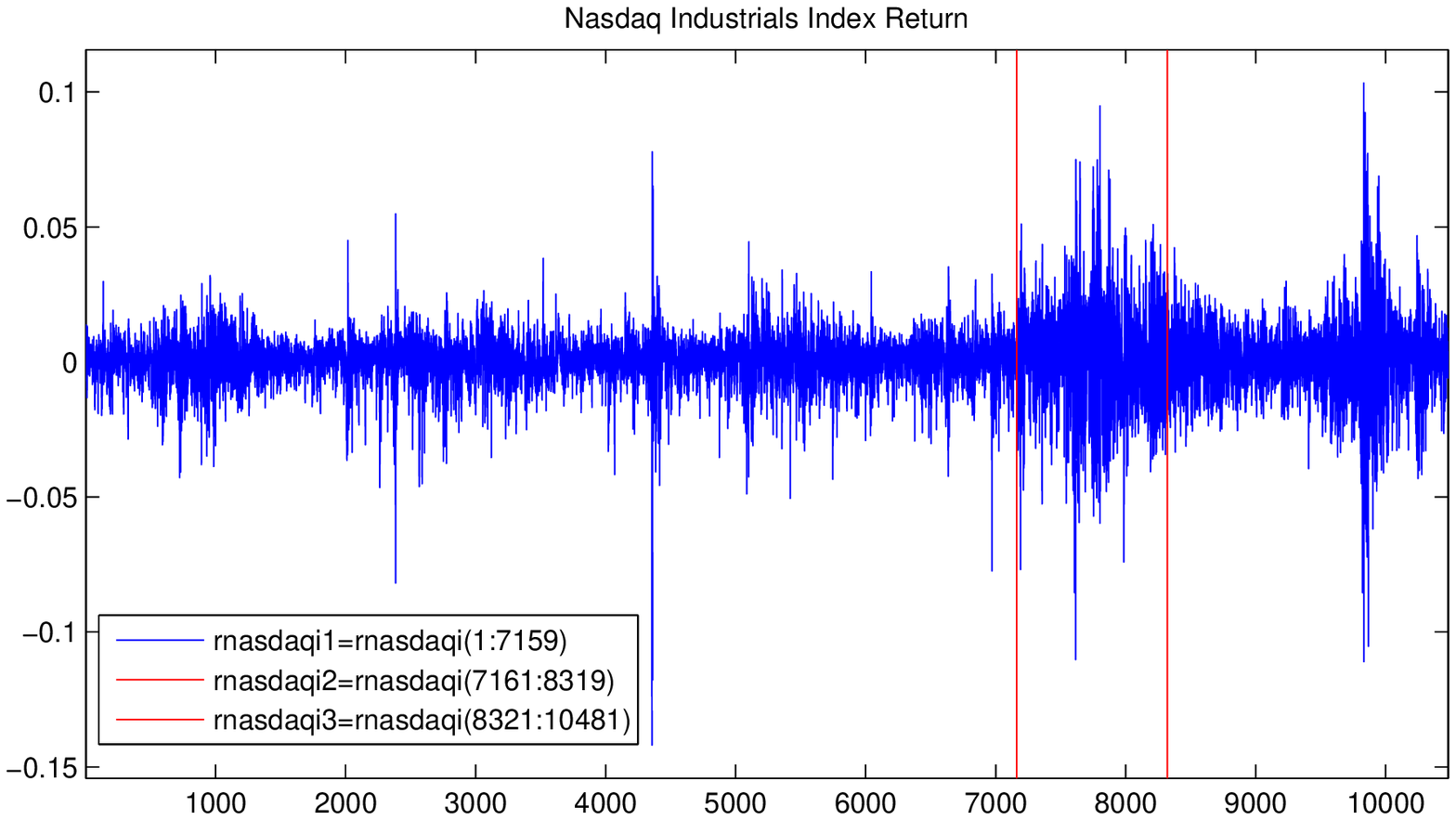}
\includegraphics[height=3.9cm,width=8cm]{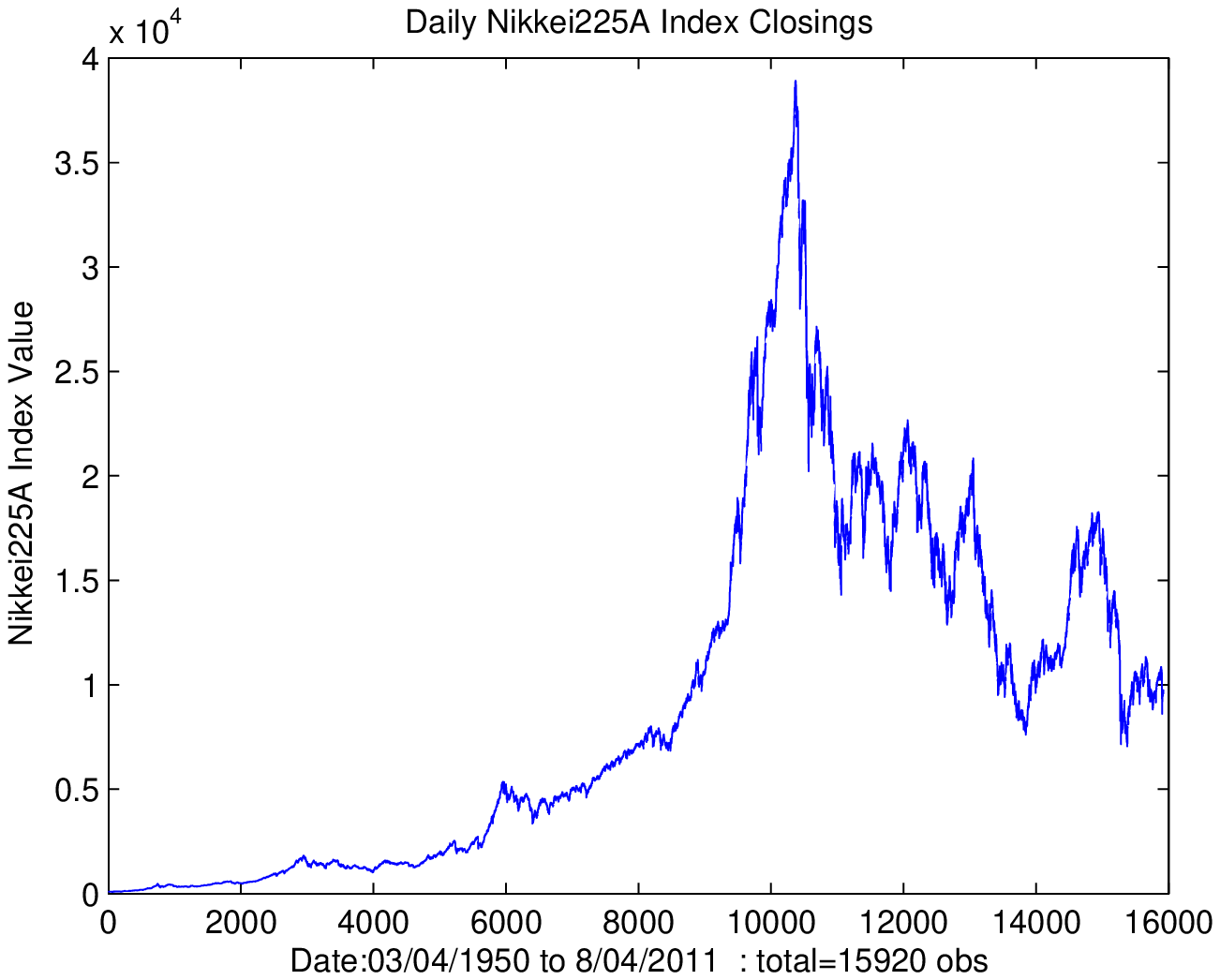}\hspace{0.4cm}
\includegraphics[height=3.9cm,width=8cm]{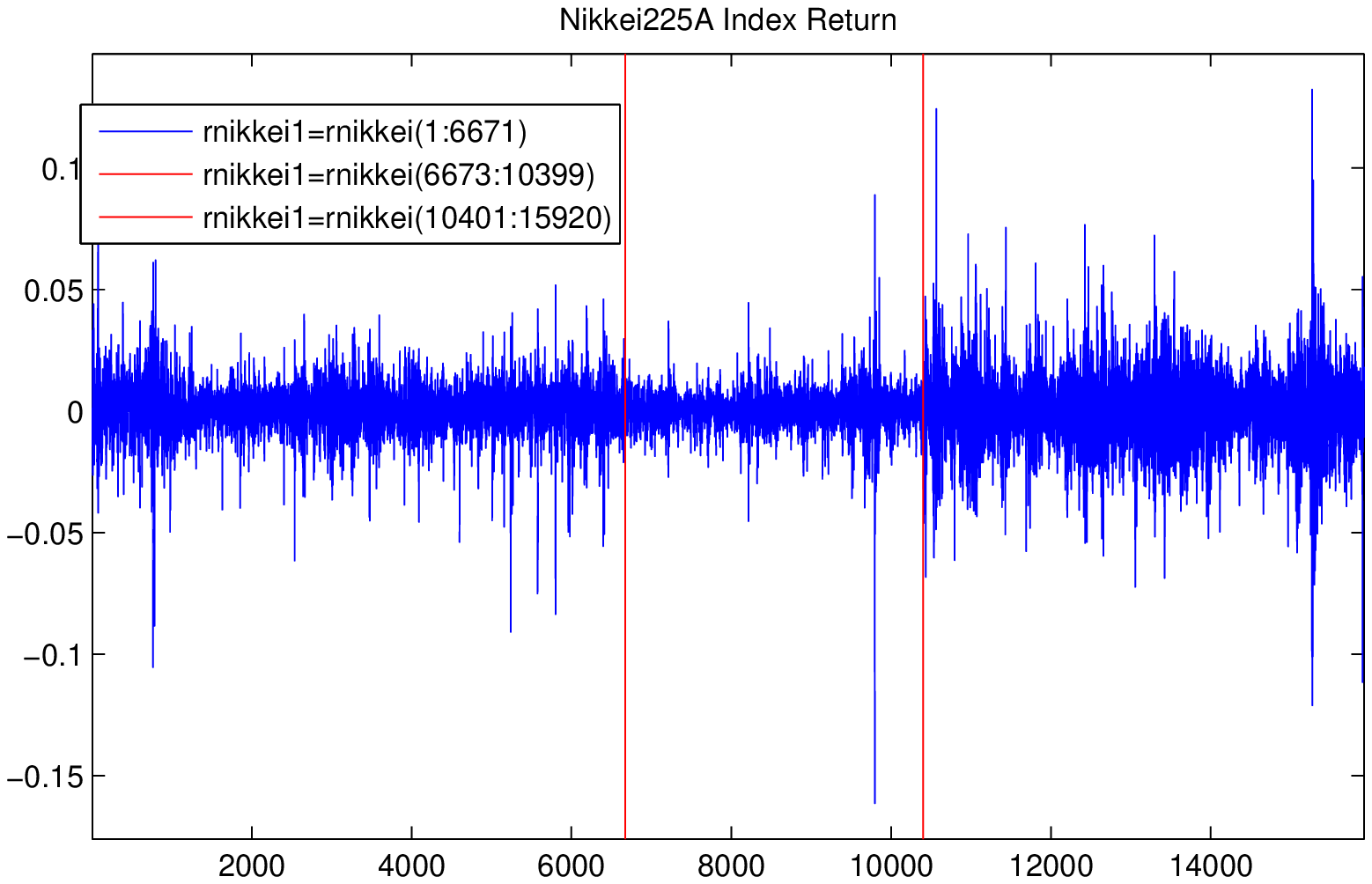}
\caption{The financial data (DowJonesTransportations, DowJonesUtilities, NasdaqIndustrials, Nikkei225A and US Dollar vs Deutsch Mark): original data (left) and log-return with their both estimated breaks instants (right) occurred at the distribution Changes}\label{ThreeRegimes}
\end{figure}

\begin{table}[t11]
{\footnotesize
\begin{center}
r=(USD1 vs Deutsh-Mark Exchange Rate Return)
\begin{tabular}{|c||c|c|c|c|c|c||c||c|c||c|c|c|c||}
\hline
\hline

Segments & \multicolumn{6}{c||}{(Non)Stationarity Test}& LRD & Kurtosis&Skewness &\multicolumn{4}{c||}{$\widehat {d}$}
\\
\cline{2-14}
Breaks&$\widetilde S_N$ & $\widetilde T_N$  & ADF & PP & KPSS & LMC &$V/S$ & $\widetilde{\kappa}$ & $\widetilde{s}$ &$\widetilde{d^{IR}}$ &$\widetilde{d^{MS}}$ &$\widehat{d^{ADG}}$ &$\widetilde{d^{WAV}}$
\\
\hline
$[1:5963]$ &S    &S    &S    &S     &S     &S  &SM    &5.5   &-0.2   &-0.031    &0.059   &0.057   &-0.007\\
\hline
$[5965:6313]$ &S    &S    &S    &S     &S     &S  &SM      &3.4    &0.1    &0.034    &0.169   &0.122   &-0.015\\
\hline
$[6315:7173]$  &S    &S    &S    &S     &S     &S  &SM      &5.3   &-0.4    &0.098    &0.140   &0.043    &0.019
\\
\hline \hline
\end{tabular}
~\\
\vspace{3mm}
$|r|=abs(\mbox{USD1 vs Deutsh-Mark Exchange Rate Return})$
\begin{tabular}{|c||c|c|c|c|c|c||c||c|c||c|c|c|c||}
\hline
\hline
Segments & \multicolumn{6}{c||}{(Non)Stationarity Test}& LRD & Kurtosis&Skewness &\multicolumn{4}{c||}{$\widehat {d}$}
\\
\cline{2-14}
Breaks&$\widetilde S_N$ & $\widetilde T_N$  & ADF & PP & KPSS & LMC &$V/S$ & $\widetilde{\kappa}$ & $\widetilde{s}$ &$\widetilde{d^{IR}}$ &$\widetilde{d^{MS}}$ &$\widehat{d^{ADG}}$ &$\widetilde{d^{WAV}}$
\\
\hline
$[1:5963]$ &S    &S    &S     &S     &NS     &NS     &LM    &9.5    &1.8    &0.294    &0.301    &0.344    &0.275\\
\hline
$[5965:6313]$ &S    &S    &NS     &S     &NS     &NS     &LM    &3.6  &1.1   &-0.121    &0.153    &0.414   &-0.038\\
\hline
$[6315:7173]$ &S    &S    &S     &S     &NS    &NS      &LM    &9.2    &1.8    &0.168    &0.417    &0.389    &0.410
\\
\hline \hline
\end{tabular}
~\\
\vspace{3mm}
$r^{2}=(\mbox{USD1 vs Deutsh-Mark Exchange Rate Return})^{2}$
\begin{tabular}{|c||c|c|c|c|c|c||c||c|c||c|c|c|c||}
\hline
\hline
Segments & \multicolumn{6}{c||}{(Non)Stationarity Test}& LRD & Kurtosis&Skewness &\multicolumn{4}{c||}{$\widehat {d}$}
\\
\cline{2-14}
Breaks&$\widetilde S_N$ & $\widetilde T_N$  & ADF & PP & KPSS & LMC &$V/S$ & $\widetilde{\kappa}$ & $\widetilde{s}$ &$\widetilde{d^{IR}}$ &$\widetilde{d^{MS}}$ &$\widehat{d^{ADG}}$ &$\widetilde{d^{WAV}}$
\\
\hline
$[1:5963]$ &S    &S &S     &S    &S     &S   &LM  &289.5   &10.7    &0.081    &0.258   &0.298    &0.078\\
\hline
$[5965:6313]$ &S   &S   &S     &S     &NS     &S   &LM    &8.7    &2.3   &-0.018    &0.127   &0.431   &-0.096\\
\hline
$[6315:7173]$ &S    &S &S    &S     &NS     &S   &LM   &81.3    &7.1    &0.035    &0.411   &0.336    &0.428
\\
\hline \hline
\end{tabular}
~\\
\vspace{3mm}
$|r|^{\theta}=(abs(\mbox{USD1 vs Deutsh-Mark Exchange Rate Return}))^{\theta}$
\begin{tabular}{|c||c|c|c|c|c|c||c||c|c||c|c|c|c||}
\hline
\hline

$\widehat{\theta}_{i}^{(j)}=$ & \multicolumn{6}{c||}{(Non)Stationarity Test}& LRD& Kurtosis&Skewness &\multicolumn{4}{c||}{$\widehat{d}$}
\\
\cline{2-14}
$\mbox{ArgMax}_\theta(\widehat{d}(|r_{i}|^{\theta}))$&$\widetilde S_N$ & $\widetilde T_N$   & ADF & PP & KPSS & LMC &$V/S$ & $\widetilde{\kappa}$ & $\widetilde{s}$ &$\widetilde{d^{IR}}$ &$\widetilde{d^{MS}}$ &$\widehat{d^{ADG}}$ &$\widetilde{d^{WAV}}$
\\
\hline
$\widehat{\theta}_{1}^{IR}$=0.32 &S    &S    &NS    &S        &NS&NS  &SM    &3.5   &-0.5    &\textbf{0.321}*  &0.251    &0.256    &0.343\\
$\widehat{\theta}_{1}^{MS}$= 0.97&S    &S    &S    &S      &NS    &NS &SM    &8.7   &1.1    &0.293 &\textbf{0.301}*    &0.343    &0.275\\
$\widehat{\theta}_{1}^{ADG}$= 1.12 &S    &S   &S    &S      &NS    &NS   &SM  &13.7    &2.3    &0.302 &0.300    &\textbf{0.345}*    &0.273\\
$\widehat{\theta}_{1}^{WAV}$=0.77 & S    &S    &S   &S      &NS    &NS  &SM  &5.1    &1.1    &0.273  &0.298   &0.335    &\textbf{0.379}*\\
\hline
$\widehat{\theta}_{2}^{IR}$=0.05 &S    &S    &NS   &NS        &S    &NS  &LM   &27.9   &-5.0    &\textbf{0.246}*  &0.078   &-0.005    &0.103\\
$\widehat{\theta}_{2}^{MS}$= 1.31& S    &S   &S    &S      &NS    &NS  &LM   &4.9    &1.5   &-0.103  &\textbf{0.166}*    &0.446   &-0.072\\
$\widehat{\theta}_{2}^{ADG}$=1.50 & S    &S   &S   &S      &NS    &S   &LM  &5.8    &1.8   &-0.092  &0.162    &\textbf{0.450}*   &-0.082\\
$\widehat{\theta}_{2}^{WAV}$=0.03 & S    &S   &NS    &NS   &S    &NS   &LM&30.9   &-5.4    &0.239 &0.113   &-0.030    &\textbf{0.211}*\\
\hline
$\widehat{\theta}_{3}^{IR}$=0.63 & S    &S   &NS   &NS    &NS     &NS   &LM  &3.8    &0.7    &\textbf{0.244}* &0.354    &0.333    &0.097
\\
$\widehat{\theta}_{3}^{MS}$= 1.44 & S    &S   &S   &S    &NS   &NS   &LM   &27.1    &3.7   &0.159 &\textbf{0.436}*    &0.387    &0.441
\\
$\widehat{\theta}_{3}^{ADG}$= 1.19& S    &S  &S    &S &NS  &NS   &LM  &14.9    &2.6   &0.168  &0.430    &\textbf{0.394}*    &0.430\\
$\widehat{\theta}_{3}^{WAV}$= 2.90& S    &S  &S & S &NS    &S   &LM &223.4   &13.3    &0.053  &0.291    &0.233    &\textbf{0.475}*
\\
\hline
\hline
\end{tabular}
~\\
\end{center}
}
\caption{{\small \label{Table11} Results of stationarity, nonstationarity and V/S tests and the $4$ long memory parameter estimators applied to several functionals $f$ of USD1 vs Deutsh-Mark Exchange Rate Return: from the top to bottom, $f(x)=x$, $f(x)=|x|$, $f(x)=x^2$ and $f(x)=|x|^\theta$ with $\theta$ maximizing the $4$ different long memory parameter estimators (''$S$`` for ''stationarity`` decision and ''$NS$`` for ''nonstationarity`` decision). Statistics are applied to the $3$ estimated stages of each trajectory (obtained from a change detection algorithm). }
}
\end{table}

\begin{table}[t12]
{\footnotesize

\begin{center}
r=Dow Jones Transportation Index Return
\begin{tabular}{|c||c|c|c|c|c|c||c||c|c||c|c|c|c||}
\hline
\hline
Segments & \multicolumn{6}{c||}{(Non)Stationarity Test}& LRD & Kurtosis&Skewness &\multicolumn{4}{c||}{$\widehat {d}$}
\\
\cline{2-14}
Breaks&$\widetilde S_N$ & $\widetilde T_N$  & ADF & PP & KPSS & LMC &$V/S$ & $\widetilde{\kappa}$ & $\widetilde{s}$ &$\widetilde{d^{IR}}$ &$\widetilde{d^{MS}}$ &$\widehat{d^{ADG}}$ &$\widetilde{d^{WAV}}$
\\
\hline
$[1:1271]$ &S   &S  &S     &S    &S    &NS   &SM    &4.6   &0.0    &0.218    &0.174    &0.098    &0.198\\
\hline
$[1273:8531]$ &S    &S  &S     &S     &S    &S  &SM   &21.7   &-0.8    &0.053    &0.002    &0.008   &-0.404\\
\hline
$[8533:12071]$ &S   &S   &S    &S    &S   &S  &SM    &8.3   &-0.3    &0.002   &-0.015    &-0.034   &-0.038
\\
\hline \hline
\end{tabular}
~\\
\vspace{3mm}
$|r|=abs(\mbox{Dow Jones Transportation Index Return})$
\begin{tabular}{|c||c|c|c|c|c|c||c||c|c||c|c|c|c||}
\hline
\hline

Segments & \multicolumn{6}{c||}{(Non)Stationarity Test}& LRD & Kurtosis&Skewness &\multicolumn{4}{c||}{$\widehat {d}$}
\\
\cline{2-14}
Breaks&$\widetilde S_N$ & $\widetilde T_N$  & ADF & PP & KPSS & LMC &$V/S$ & $\widetilde{\kappa}$ & $\widetilde{s}$ &$\widetilde{d^{IR}}$ &$\widetilde{d^{MS}}$ &$\widehat{d^{ADG}}$ &$\widetilde{d^{WAV}}$
\\
\hline
$[1:1272]$ &S   &S   &S   &S   &NS    &NS    &LM    &6.1  &1.5    &0.154    &0.320    &0.270    &0.166\\
\hline
$[1273:8532]$ &S   &S  &S   &S  &NS   &NS     &LM   &57.3    &4.4    &0.322    &0.260    &0.240    &0.168\\
\hline
$[8533:12071]$ &S   &S &S    &S   &NS    &NS   &LM  &16.3    &2.5    &0.405    &0.476    &0.496    &0.374
\\
\hline \hline
\end{tabular}
~\\
\vspace{3mm}
$r^{2}=(\mbox{Dow Jones Transportation Index Return})^{2}$
\begin{tabular}{|c||c|c|c|c|c|c||c||c|c||c|c|c|c||}
\hline
\hline

Segments & \multicolumn{6}{c||}{(Non)Stationarity Test}& LRD & Kurtosis&Skewness &\multicolumn{4}{c||}{$\widehat {d}$}
\\
\cline{2-14}
Breaks&$\widetilde S_N$ & $\widetilde T_N$  & ADF & PP & KPSS & LMC &$V/S$ & $\widetilde{\kappa}$ & $\widetilde{s}$ &$\widetilde{d^{IR}}$ &$\widetilde{d^{MS}}$ &$\widehat{d^{ADG}}$ &$\widetilde{d^{WAV}}$
\\
\hline
$[1:1272]$ &S  &S &S     &S    &NS    &S  &LM  &32.3     &4.4    &0.158   &0.284   &0.231   &0.231\\
\hline
$[1273:8532]$ &S   &S &S   &S   &S     &NS   &LM   &2301.5   &39.9   &0.334   &0.122   &0.093   &0.118\\
\hline
$[8533:12071]$ &S  &NS  &S     &S     &NS    &NS    &LM   &459.0    &15.5  &0.416   &0.452   &0.434   &0.356\\
\hline \hline
\end{tabular}
~\\
\vspace{3mm}
$|r|^{\theta}=(abs(\mbox{Dow Jones Transportation Index Return}))^{\theta}$
\begin{tabular}{|c||c|c|c|c|c|c||c||c|c||c|c|c|c||}
\hline
\hline

$\widehat{\theta}_{i}^{(j)}=$ & \multicolumn{6}{c||}{(Non)Stationarity Test}& LRD& Kurtosis&Skewness &\multicolumn{4}{c||}{$\widehat{d}$}
\\
\cline{2-14}
$\mbox{ArgMax}_\theta(\widehat{d}(|r_{i}|^{\theta})$&$\widetilde S_N$ & $\widetilde T_N$   & ADF & PP & KPSS & LMC &$V/S$ & $\widetilde{\kappa}$ & $\widetilde{s}$ &$\widetilde{d^{IR}}$ &$\widetilde{d^{MS}}$ &$\widehat{d^{ADG}}$ &$\widetilde{d^{WAV}}$
\\
\hline
$\widehat{\theta}_{1}^{IR}$= 1.83    &S    &S  &S    &S   &NS     &S  &LM   &25.0    &3.8    &\textbf{0.252}*   &0.291    &0.237    &0.202\\
$\widehat{\theta}_{1}^{MS}$= 0.45    &S    &S   &NS   &S   &NS     &NS   &LM    &2.7    &0.4    &0.118   &\textbf{0.331}*    &0.290    &0.047\\
$\widehat{\theta}_{1}^{ADG}$= 0.36    &S   &S  &NS    &S    &NS    &NS   &LM    &2.9   &-0.4    &0.118   &0.329    &\textbf{0.291}*    &0.237\\
$\widehat{\theta}_{1}^{WAV}$= 0.03    &S    &S  &NS    &S    &NS      &NS    &LM   &12.8   &-3.4    &0.149   &0.257    &0.260    &\textbf{0.327}*\\
\hline
$\widehat{\theta}_{2}^{IR}$= 2.06   &S   &S   &S     &S    &S    &NS  &LM   &2551.6      &42.6   &\textbf{0.355}*   &0.113   &0.086   &0.110\\
$\widehat{\theta}_{2}^{MS}$= 0.68   &S   &S   &S   &S   &NS   &NS  &LM     &10.6       &1.6   &0.308   &\textbf{0.276}*   &0.261   &0.135\\
$\widehat{\theta}_{2}^{ADG}$= 0.65   &S   &S  &S   &S   &NS  &NS  &LM      &9.2       &1.4   &0.303   &0.276   &\textbf{0.261}*   &0.129\\
$\widehat{\theta}_{2}^{WAV}$=  1.29   &S   &S  &S    &S    &NS  &NS &LM   &246.8      &10.1   &0.330   &0.227   &0.200   &\textbf{0.504}*\\
\hline
$\widehat{\theta}_{3}^{IR}$= 0.66   &S   &NS     &S    &S   &NS    &NS       &LM      &5.4    &1.1  &\textbf{0.444}*   &0.435 &0.461   &0.374\\
$\widehat{\theta}_{3}^{MS}$= 1.38   &S  &S    &S    &S  &NS    &NS   &LM     &64.8    &5.2  &0.402   &\textbf{0.492}*   &0.499   &0.391\\
$\widehat{\theta}_{3}^{ADG}$= 1.22   &S   &S  &S     &S  &NS    &NS     &LM    &36.2    &3.8  &0.400   &0.489  &\textbf{0.502}*   &0.387\\
$\widehat{\theta}_{3}^{WAV}$= 2.75   &S   &S   &S     &S  &NS   &NS      &LM  &1698.7   &35.8  &0.407   &0.315   &0.287   &\textbf{0.466}*\\
\hline \hline
\end{tabular}
~\\
\end{center}
}
\caption{{\small \label{Table12}Results of stationarity, nonstationarity and V/S tests and the $4$ long memory parameter estimators applied to several functionals $f$ of DowJones Transportation Index Return: from the top to bottom, $f(x)=x$, $f(x)=|x|$, $f(x)=x^2$ and $f(x)=|x|^\theta$ with $\theta$ maximizing the $4$ different long memory parameter estimators (''$S$`` for ''stationarity`` decision and ''$NS$`` for ''nonstationarity`` decision). Statistics are applied to the $3$ estimated stages of each trajectory (obtained from a change detection algorithm). }
}
\end{table}

\begin{table}[t13]
{\footnotesize
\begin{center}
r=Dow Jones Utilities Index Return
\begin{tabular}{|c||c|c|c|c|c|c||c||c|c||c|c|c|c||}
\hline
\hline

Segments & \multicolumn{6}{c||}{(Non)Stationarity Test}& LRD & Kurtosis&Skewness &\multicolumn{4}{c||}{$\widehat {d}$}
\\
\cline{2-14}
Breaks&$\widetilde S_N$ & $\widetilde T_N$  & ADF & PP & KPSS & LMC &$V/S$ & $\widetilde{\kappa}$ & $\widetilde{s}$ &$\widetilde{d^{IR}}$ &$\widetilde{d^{MS}}$ &$\widehat{d^{ADG}}$ &$\widetilde{d^{WAV}}$
\\
\hline
$[1:1152]$ &S   &S  &S  &S   &S    &S  &SM  &7.3   &0.6    &0.191    &0.037    &-0.132    &0.222\\
\hline
$[1153:8748]$ &S   &S  &S   &S    &S    &S   &SM  &43.2   &-1.3    &0.094    &0.025    &0.001     &0.043 \\
\hline
$[8749:12071]$ &S    &S &S     &S    &S   &S    &SM   &13.0    &0.0    &0.026    &0.024    &0.001     &-0.032
\\
\hline \hline
\end{tabular}
~\\
\vspace{3mm}
$|r|=abs(\mbox{Dow Jones Utilities Index Return})$
\begin{tabular}{|c||c|c|c|c|c|c||c||c|c||c|c|c|c||}
\hline
\hline

Segments & \multicolumn{6}{c||}{(Non)Stationarity Test}& LRD & Kurtosis&Skewness &\multicolumn{4}{c||}{$\widehat {d}$}
\\
\cline{2-14}
Breaks&$\widetilde S_N$ & $\widetilde T_N$  & ADF & PP & KPSS & LMC &$V/S$ & $\widetilde{\kappa}$ & $\widetilde{s}$ &$\widetilde{d^{IR}}$ &$\widetilde{d^{MS}}$ &$\widehat{d^{ADG}}$ &$\widetilde{d^{WAV}}$
\\
\hline
$[1:1152]$ &S   &S   &S    &S   &NS    &NS    &LM     &11.9    &2.4    &0.283     &0.287   &0.316    &0.225\\
\hline
$[1153:8748]$ &S   &S  &S     &S    &NS    &NS    &LM    &127.4     &5.9    &0.134     &0.301   &0.304    &0.184\\
\hline
$[8749:12071]$ &S   &S &S    &S   &NS    &NS     &LM     &25.5     &3.4    &0.417     &0.559   &0.484    &0.595
\\
\hline \hline
\end{tabular}
~\\
\vspace{3mm}
$r^{2}=(\mbox{Dow Jones Utilities Index Return})^{2}$
\begin{tabular}{|c||c|c|c|c|c|c||c||c|c||c|c|c|c||}
\hline
\hline

Segments & \multicolumn{6}{c||}{(Non)Stationarity Test}& LRD & Kurtosis&Skewness &\multicolumn{4}{c||}{$\widehat {d}$}
\\
\cline{2-14}
Breaks&$\widetilde S_N$ & $\widetilde T_N$  & ADF & PP & KPSS & LMC &$V/S$ & $\widetilde{\kappa}$ & $\widetilde{s}$ &$\widetilde{d^{IR}}$ &$\widetilde{d^{MS}}$ &$\widehat{d^{ADG}}$ &$\widetilde{d^{WAV}}$
\\
\hline
$[1:1152]$ &S   &S  &S     &S     &NS     &S  &LM   &63.1      &6.7   &0.250    &0.253    &0.212   &0.270\\
\hline
$[1153:8748]$ &S   &S  &S     &S    &NS     &NS  &LM   &5322.4   &67.8   &0.130    &0.100    &0.100   &0.100 \\
\hline
$[8749:12071]$ &S  &NS    &S    &S     &NS     &NS    &LM   &289.6    &14.0   &0.510    &0.468    &0.423   &0.513
\\
\hline \hline
\end{tabular}
~\\
\vspace{3mm}
$|r|^{\theta}=(abs(\mbox{Dow Jones Utilities Index Return}))^{\theta}$
\begin{tabular}{|c||c|c|c|c|c|c||c||c|c||c|c|c|c||}
\hline
\hline

$\widehat{\theta}_{i}^{(j)}=$ & \multicolumn{6}{c||}{(Non)Stationarity Test}& LRD& Kurtosis&Skewness &\multicolumn{4}{c||}{$\widehat{d}$}
\\
\cline{2-14}
$\mbox{ArgMax}_\theta(\widehat{d}(|r_{i}|^{\theta}))$&$\widetilde S_N$ & $\widetilde T_N$   & ADF & PP & KPSS & LMC &$V/S$ & $\widetilde{\kappa}$ & $\widetilde{s}$ &$\widetilde{d^{IR}}$ &$\widetilde{d^{MS}}$ &$\widehat{d^{ADG}}$ &$\widetilde{d^{WAV}}$
\\
\hline
$\widehat{\theta}_{1}^{IR}= 0.39 $   &S    &S    &NS    &S      &NS    &NS  &LM    &3.3   &-0.1    &\textbf{0.354}*   &0.262   &0.327    &0.145\\
$\widehat{\theta}_{1}^{MS}= 1.09  $  &S    &S    &S   &S     &NS   &NS &LM   &14.6    &8.4    &0.215   &\textbf{0.288}*    &0.308    &0.234\\
$\widehat{\theta}_{1}^{ADG}= 0.60  $  &S    &S   &NS   &S      &NS    &NS  &LM    &4.5    &0.8    &0.311   &0.276    &\textbf{0.336}*    &0.396\\
$\widehat{\theta}_{1}^{WAV}= 0.63   $ &S    &S   &S   &S    &NS    &NS  &LM    &4.8    &1.0    &0.310   &0.278    &0.336    &\textbf{0.398}*\\
\hline
$\widehat{\theta}_{2}^{IR}= 3.00  $ &S  &S &S   &S     &S  &S  &LM   &7320.6   &84.9   &\textbf{0.165}*   &0.015   &0.017   &0.040\\
$\widehat{\theta}_{2}^{MS}= 0.61  $ &S  &S &S    &S    &NS    &NS   &LM   &9.1       &1.2   &0.113   &\textbf{0.330}*   &0.327   &0.113\\
$\widehat{\theta}_{2}^{ADG}= 0.67  $ &S   &S &S    &S   &NS    &NS  &LM   &13.0      &1.6  &0.117   &0.330   &\textbf{0.327}*   &0.113\\
$\widehat{\theta}_{2}^{WAV}= 1.84  $ &S   &S &S    &S   &NS    &NS    &LM  &4386.6   &59.1   &0.125   &0.130   &0.129   &\textbf{0.377}*\\
\hline
$\widehat{\theta}_{3}^{IR}= 2.69   $ &S    &NS   &S  &S   &NS        &NS  &LM  &683.1   &22.5   &\textbf{0.527}*   &0.394    &0.344  &0.426\\
$\widehat{\theta}_{3}^{MS}= 0.95   $ &S   &S   &S    &S    &NS       &NS  &LM   &21.6    &3.1    &0.415   &\textbf{0.560}*    &0.483   &0.544\\
$\widehat{\theta}_{3}^{ADG}= 1.10  $  &S  &S  &S    &S  &NS  &NS  &LM   &35.2    &4.2    &0.421   &0.557    &\textbf{0.485}*    &0.364\\
$\widehat{\theta}_{3}^{WAV}= 1.03  $  &S  &S  &S      &S  &NS   &NS &LM  &28.2    &3.7    &0.419  &0.559    &0.484   &\textbf{0.723}*\\
\hline \hline
\end{tabular}
~\\
\end{center}
}
\caption{{\small \label{Table13} Results of stationarity, nonstationarity and V/S tests and the $4$ long memory parameter estimators applied to several functionals $f$ of Dow Jones Utilities Index Return: from the top to bottom, $f(x)=x$, $f(x)=|x|$, $f(x)=x^2$ and $f(x)=|x|^\theta$ with $\theta$ maximizing the $4$ different long memory parameter estimators (''$S$`` for ''stationarity`` decision and ''$NS$`` for ''nonstationarity`` decision). Statistics are applied to the $3$ estimated stages of each trajectory (obtained from a change detection algorithm). }
}
\end{table}

\begin{table}[t14]

{\footnotesize

\begin{center}
r=Nasdaq Industrials Index Return
\begin{tabular}{|c||c|c|c|c|c|c||c||c|c||c|c|c|c||}
\hline
\hline

Segments & \multicolumn{6}{c||}{(Non)Stationarity Test}& LRD & Kurtosis&Skewness &\multicolumn{4}{c||}{$\widehat {d}$}
\\
\cline{2-14}
Breaks&$\widetilde S_N$ & $\widetilde T_N$  & ADF & PP & KPSS & LMC &$V/S$ & $\widetilde{\kappa}$ & $\widetilde{s}$ &$\widetilde{d^{IR}}$ &$\widetilde{d^{MS}}$ &$\widehat{d^{ADG}}$ &$\widetilde{d^{WAV}}$
\\
\hline
$[1:7160]$ &S   &S  &S    &S  &S  &S &SM  &20.7   &-1.5    &0.141    &0.073    &0.092   &-0.202\\
\hline
$[7161:8320]$ &S   &S  &S  &S  &S  &S  &SM  &4.6   &0.0    &0.012    &0.070    &0.116    &0.014\\
\hline
$[8321:10480]$ &S  &S  &S  &S   &NS  &S &SM   &10.4   &-0.3    &0.045    &0.078    &0.082   &-0.045\\

\hline \hline
\end{tabular}
~\\
\vspace{3mm}
$|r|=abs(\mbox{Nasdaq Industrials Index Return})$
\begin{tabular}{|c||c|c|c|c|c|c||c||c|c||c|c|c|c||}
\hline
\hline

Segments & \multicolumn{6}{c||}{(Non)Stationarity Test}& LRD & Kurtosis&Skewness &\multicolumn{4}{c||}{$\widehat {d}$}
\\
\cline{2-14}
Breaks&$\widetilde S_N$ & $\widetilde T_N$  & ADF & PP & KPSS & LMC &$V/S$ & $\widetilde{\kappa}$ & $\widetilde{s}$ &$\widetilde{d^{IR}}$ &$\widetilde{d^{MS}}$ &$\widehat{d^{ADG}}$ &$\widetilde{d^{WAV}}$
\\
\hline
$[1:7160]$ &S  &S &S   &S &S  &NS  &SM  &52.4    &4.4    &0.361    &0.309   &0.287    &0.274\\
\hline
$[7161:8320]$ &S   &S  &S &S  &NS  &NS   &LM       &7.4     &1.6    &0.284    &0.532   &0.504    &0.385\\
\hline
$[8321:10480]$ &S   &NS  &S  &NS   &NS  &NS   &LM     &18.3    &3.0    &0.516    &0.761   &0.606    &0.668\\
\hline \hline
\end{tabular}
~\\
\vspace{3mm}
$r^{2}=(\mbox{Nasdaq Industrials Index Return})^{2}$
\begin{tabular}{|c||c|c|c|c|c|c||c||c|c||c|c|c|c||}
\hline
\hline

Segments & \multicolumn{6}{c||}{(Non)Stationarity Test}& LRD & Kurtosis&Skewness &\multicolumn{4}{c||}{$\widehat {d}$}
\\
\cline{2-14}
Breaks&$\widetilde S_N$ & $\widetilde T_N$  & ADF & PP & KPSS & LMC &$V/S$ & $\widetilde{\kappa}$ & $\widetilde{s}$ &$\widetilde{d^{IR}}$ &$\widetilde{d^{MS}}$ &$\widehat{d^{ADG}}$ &$\widetilde{d^{WAV}}$
\\
\hline
$[1:7160]$ &S  &S &S   &S  &S  &NS  &SM   &1356.8   &31.4   &0.381   &0.146   &0.114   &0.100\\
\hline
$[7161:8320]$ &S  &S &S   &S &NS  &NS &LM    &49.0   &5.4   &0.304    &0.466   &0.378   &0.432\\
\hline
$[8321:10480]$ &S &NS &S    &S    &NS  &NS &LM   &140.0   &10.0   &0.498    &0.786   &0.544   &0.708\\
\hline \hline
\end{tabular}
~\\
\vspace{3mm}
$|r|^{\theta}=(abs(\mbox{Nasdaq Industrials Index Return}))^{\theta}$
\begin{tabular}{|c||c|c|c|c|c|c||c||c|c||c|c|c|c||}
\hline
\hline

$\widehat{\theta}_{i}^{(j)}=$ & \multicolumn{6}{c||}{(Non)Stationarity Test}& LRD& Kurtosis&Skewness &\multicolumn{4}{c||}{$\widehat{d}$}
\\
\cline{2-14}
$\mbox{ArgMax}_\theta(\widehat{d}(|r_{i}|^{\theta}))$&$\widetilde S_N$ & $\widetilde T_N$   & ADF & PP & KPSS & LMC &$V/S$ & $\widetilde{\kappa}$ & $\widetilde{s}$ &$\widetilde{d^{IR}}$ &$\widetilde{d^{MS}}$ &$\widehat{d^{ADG}}$ &$\widetilde{d^{WAV}}$
\\
\hline
$\widehat{\theta}_{1}^{IR}= 1.04 $   &S   &S  &S  &S  &S   &NS   &LM   &63.7    &4.9    &\textbf{0.396}*   &0.304    &0.281    &0.325\\
$\widehat{\theta}_{1}^{MS}= 0.67  $  &S   &S  &S  &S    &NS     &NS &LM   &10.2    &0.7    &0.188   &\textbf{0.329}*    &0.325    &0.293\\
$\widehat{\theta}_{1}^{ADG}=  0.56 $   &S   &S &S  &S    &NS    &NS  &LM   &6.3    &0.9    &0.178   &0.326  &\textbf{0.328}*    &0.275\\
$\widehat{\theta}_{1}^{WAV}=  0.83  $  &S   &S &S  &S  &NS   &NS &LM  &22.4    &2.6   &0.199    &0.324    &0.311  &\textbf{0.587}*\\
\hline
$\widehat{\theta}_{2}^{IR}= 2.83  $  &S   &S &S &S &NS    &S &LM  &142.0    &9.8    &\textbf{0.317}*   &0.374    &0.276   &0.263\\
$\widehat{\theta}_{2}^{MS}= 1.03  $  &S    &S &S  &S   &NS  &NS  &LM   &7.9    &1.7    &0.284   &\textbf{0.532}*    &0.501    &0.388\\
$\widehat{\theta}_{2}^{ADG}= 0.73  $  &S  &S &NS   &S    &NS   &NS &LM    &4.2    &0.9    &0.300   &0.517    &\textbf{0.517}*    &0.340\\
$\widehat{\theta}_{2}^{WAV}= 1.87   $ &S  &S &S   &S   &NS   &NS  &LM   &39.8   &4.8    &0.299   &0.479   &0.395    &\textbf{0.432}*\\
\hline
$\widehat{\theta}_{3}^{IR}= 2.60   $ &S  &NS  &S     &S &NS   &S &LM  &256.9   &14.3    &\textbf{0.548}*   &0.272    &0.479    &0.669\\
$\widehat{\theta}_{3}^{MS}= 1.70   $ &S &NS &S     &S  &NS &NS &LM  &89.1    &7.7    &0.504   &\textbf{0.801}*    &0.575    &0.739\\
$\widehat{\theta}_{3}^{ADG}= 1.13  $  &S  &NS &S  &S  &NS &NS &LM  &26.0    &3.7    &0.526   &0.772   &\textbf{0.608}*    &0.671\\
$\widehat{\theta}_{3}^{WAV}= 1.26  $  &S    &NS  &S    &S    &NS   &NS &LM  &36.0    &4.5    &0.532   &0.782   &0.606  &\textbf{0.760}*\\
\hline \hline
\end{tabular}
~\\
\end{center}
}
\caption{{\small \label{Table14} Results of stationarity, nonstationarity and V/S tests and the $4$ long memory parameter estimators applied to several functionals $f$ of Nasdaq Industrials Index Return: from the top to bottom, $f(x)=x$, $f(x)=|x|$, $f(x)=x^2$ and $f(x)=|x|^\theta$ with $\theta$ maximizing the $4$ different long memory parameter estimators (''$S$`` for ''stationarity`` decision and ''$NS$`` for ''nonstationarity`` decision). Statistics are applied to the $3$ estimated stages of each trajectory (obtained from a change detection algorithm). }
}
\end{table}

\begin{table}[t15]

{\footnotesize

\begin{center}
r=Nikkei 225A Index Return
\begin{tabular}{|c||c|c|c|c|c|c||c||c|c||c|c|c|c||}
\hline
\hline

Segments & \multicolumn{6}{c||}{(Non)Stationarity Test}& LRD & Kurtosis&Skewness &\multicolumn{4}{c||}{$\widehat {d}$}
\\
\cline{2-14}
Breaks&$\widetilde S_N$ & $\widetilde T_N$  & ADF & PP & KPSS & LMC &$V/S$ & $\widetilde{\kappa}$ & $\widetilde{s}$ &$\widetilde{d^{IR}}$ &$\widetilde{d^{MS}}$ &$\widehat{d^{ADG}}$ &$\widetilde{d^{WAV}}$
\\
\hline
$[1:6672]$ &S   &S   &S   &S   &S   &S &SM   &12.6   &-0.6    &0.083    &0.067    &0.084    &0.022\\
\hline
$[6673:10400]$ &S &S  &S   &S  &S &S &SM   &63.7   &-2.3  &-0.021   &-0.016    &-0.013   &-0.039\\
\hline
$[10401:15919]$ &S &S  &S  &S  &S   &S  &SM  &9.0   &-0.1    &0.033    &0.047   &-0.005   &-0.015
\\
\hline \hline
\end{tabular}
~\\
\vspace{3mm}
$|r|=abs(\mbox{Nikkei 225A Index Return})$
\begin{tabular}{|c||c|c|c|c|c|c||c||c|c||c|c|c|c||}
\hline
\hline

Segments & \multicolumn{6}{c||}{(Non)Stationarity Test}& LRD & Kurtosis&Skewness &\multicolumn{4}{c||}{$\widehat {d}$}
\\
\cline{2-14}
Breaks&$\widetilde S_N$ & $\widetilde T_N$  & ADF & PP & KPSS & LMC &$V/S$ & $\widetilde{\kappa}$ & $\widetilde{s}$ &$\widetilde{d^{IR}}$ &$\widetilde{d^{MS}}$ &$\widehat{d^{ADG}}$ &$\widetilde{d^{WAV}}$
\\
\hline
$[1:6672]$ &S    &S &S &S  &NS    &NS   &LM   &26.1     &3.3     &0.302    &0.343   &0.313    &0.218\\
\hline
$[6673:10400]$ &S  &S &S   &S  &NS  &NS  &LM &150.9    &7.5    &0.196    &0.346   &0.304    &0.321\\
\hline
$[10401:15919]$ &S   &S  &S  &S  &NS  &NS   &LM  &17.0     &2.6     &0.413    &0.415   &0.431    &0.335
\\
\hline \hline
\end{tabular}
~\\
\vspace{3mm}
$r^{2}=(\mbox{Nikkei 225A Index Return})^{2}$
\begin{tabular}{|c||c|c|c|c|c|c||c||c|c||c|c|c|c||}
\hline
\hline

Segments & \multicolumn{6}{c||}{(Non)Stationarity Test}& LRD & Kurtosis&Skewness &\multicolumn{4}{c||}{$\widehat {d}$}
\\
\cline{2-14}
Breaks&$\widetilde S_N$ & $\widetilde T_N$  & ADF & PP & KPSS & LMC &$V/S$ & $\widetilde{\kappa}$ & $\widetilde{s}$ &$\widetilde{d^{IR}}$ &$\widetilde{d^{MS}}$ &$\widehat{d^{ADG}}$ &$\widetilde{d^{WAV}}$
\\
\hline
$[1:6672]$ &S  &S &S   &S  &NS   &NS &LM   &427.8  &16.9  &0.275   &0.241   &0.267   &0.080\\
\hline
$[6673:10400]$ &S  &S &S   &S   &S   &NS &LM  &2610.8 &48.0 &0.230   &0.146   &0.154   &0.117\\
\hline
$[10401:15919]$ &S  &S  &S   &S  &NS   &NS &LM  &235.5   &12.3   &0.381   &0.396   &0.363   &0.377
\\
\hline \hline
\end{tabular}
~\\
\vspace{3mm}
$|r|^{\theta}=(abs(\mbox{Nikkei 225A Index Return}))^{\theta}$
\begin{tabular}{|c||c|c|c|c|c|c||c||c|c||c|c|c|c||}
\hline
\hline

$\widehat{\theta}_{i}^{(j)}=$ & \multicolumn{6}{c||}{(Non)Stationarity Test}& LRD& Kurtosis&Skewness &\multicolumn{4}{c||}{$\widehat{d}$}
\\
\cline{2-14}
$\mbox{ArgMax}_\theta(\widehat{d}(|r_{i}|^{\theta}))$&$\widetilde S_N$ & $\widetilde T_N$   & ADF & PP & KPSS & LMC &$V/S$ & $\widetilde{\kappa}$ & $\widetilde{s}$ &$\widetilde{d^{IR}}$ &$\widetilde{d^{MS}}$ &$\widehat{d^{ADG}}$ &$\widetilde{d^{WAV}}$
\\
\hline
$\widehat{\theta}_{1}^{IR}= 1.53 $   &S  &S  &S   &S  &NS  &NS  &LM  &149.8    &9.0    &\textbf{0.323}*   &0.296    &0.296    &0.201\\
$\widehat{\theta}_{1}^{MS}= 0.86  $  &S  &S  &S  &S  &NS    &NS  &LM   &15.4    &4.4    &0.286   &\textbf{0.345}*    &0.311    &0.213\\
$\widehat{\theta}_{1}^{ADG}= 1.01  $  &S   &S &S  &S  &NS  &NS &LM   &27.1    &3.4    &0.303   &0.342    &\textbf{0.313}*    &0.218\\
$\widehat{\theta}_{1}^{WAV}= 1.30 $   &S   &S  &S  &S  &NS  &NS &LM   &74.8    &6.1    &0.273   &0.322    &0.307    &\textbf{0.622}*\\
\hline
$\widehat{\theta}_{2}^{IR}= 3.00 $  &S  &S &S &S   &S  &S  &LM   &3487.4     &58.3  &\textbf{0.252}*   &0.037   &0.042   &0.045\\
$\widehat{\theta}_{2}^{MS}= 0.84 $  &S  &S  &S    &S  &NS   &NS  &SM    &55.5      &4.1   &0.180   &\textbf{0.353}*   &0.304  &0.154\\
$\widehat{\theta}_{2}^{ADG}= 0.91 $  &S  &S &S  &S  &NS   &NS  &SM     &87.1      &5.3   &0.186   &0.352   &\textbf{0.305}*   &0.035\\
$\widehat{\theta}_{2}^{WAV}= 1.64  $ &S  &S &S   &S &S  &NS  &SM   &1697.1     &35.8   &0.221   &0.220   &0.222   &\textbf{0.465}*\\
\hline
$\widehat{\theta}_{3}^{IR}= 1.23  $  &S   &NS  &S  &S  &NS  &NS  &LM   &35.7  &4.1    &\textbf{0.467}*  &0.412    &0.426    &0.386\\
$\widehat{\theta}_{3}^{MS}= 0.87  $  &S   &S  &S   &S  &NS  &NS   &LM &11.0    &2.0    &0.429  &\textbf{0.415}*    &0.428    &0.351\\
$\widehat{\theta}_{3}^{ADG}= 1.00  $  &S   &S &S   &S &NS  &NS   &LM   &17.0   &2.6   &0.413  &0.415    &\textbf{0.431}*    &0.335\\
$\widehat{\theta}_{3}^{WAV}= 1.27  $  &S   &NS &S   &S   &NS  &NS   &LM   &40.3    &4.4    &0.467  &0.411    &0.424    &\textbf{0.425}*\\
\hline \hline
\end{tabular}
~\\
\end{center}
}
\caption{{\small \label{Table15} Results of stationarity, nonstationarity and V/S tests and the $4$ long memory parameter estimators applied to several functionals $f$ of Nikkei 225A Index Return: from the top to bottom, $f(x)=x$, $f(x)=|x|$, $f(x)=x^2$ and $f(x)=|x|^\theta$ with $\theta$ maximizing the $4$ different long memory parameter estimators (''$S$`` for ''stationarity`` decision and ''$NS$`` for ''nonstationarity`` decision). Statistics are applied to the $3$ estimated stages of each trajectory (obtained from a change detection algorithm). }
}
\end{table}

\section{Proofs}\label{proofs} \label{Appendix}

\begin{proof}[Proof of Proposition \ref{MCLT}]
This proposition is based on results of Surgailis {\it et al.} (2008) and was already proved in Bardet et Dola (2012) in the case $-0.5<d<0.5$. \\
{\it Mutatis mutandis}, the case $0.5<d<1.25$ can be treated exactly following the same steps. \\
The only new proof which has to be established concerns the case $d=0.5$ since Surgailis {\it et al.} (2008) do not provide a CLT satisfied by the (unidimensional) statistic $IR_N(m)$ in this case. Let $Y_{m}(j)$ the standardized process defined Surgailis {\it et al.} (2008). Then, for $d=0.5$,  \begin{eqnarray*}\label{Rmj}
\forall j \geq 1,\quad |\gamma_{m}(j)|=\big |\E \big ( Y_{m}(j) Y_{m}(0) \big ) \big | = \frac{2}{V^{2}_{m}} \Big |\int_{0}^{\pi}~\cos(jx)~x\big(c_{0}+O(x^{\beta})\big)\frac{\sin^{4}(\frac{mx}{2})}{\sin^{4}(\frac{x}{2})}dx \Big |.
\end{eqnarray*}
Denote $\gamma_{m}(j)=\rho_{m}(j)=\frac{2}{V^{2}_{m}}\big(I_{1}+I_{2}\big)$ as in (5.39) of Surgailis {\it et al.} (2008). Both inequalities (5.41) and (5.42) remain true for $d=0.5$ and
\begin{multline*}
|I_{1}|\leq C\, \frac{m^{3}}{j}, \quad |I_{2}|\leq C\, \frac{m^{4}}{j^{2}}\quad  \Longrightarrow \quad |I_{1}+I_{2}|\leq C\, \frac{m^{3}}{j} \quad
\Longrightarrow \quad  |\gamma_{m}(j)|=|\rho_{m}(j)|\leq\frac{2}{V^{2}_{m}}\big(|I_{1}+I_{2}|\big)\leq C\, \frac{m}{j}.
\end{multline*}
Now let $\eta_{m}(j):=\frac{|Y_{m}(j)+Y_{m}(j+m)|}{|Y_{m}(j)|+|Y_{m}(j+m)|}:=\psi\Big(Y_{m}(j),Y_{m}(j+m)\Big)$. The Hermite rank of the function $\psi$ is $2$ and therefore the equation (5.23) of Surgailis {\it et al.} (2008) obtained from Arcones Lemma remains valid. Hence:
$$\big |\Cov(\eta_{m}(0),\eta_{m}(j))\big |\leq C\frac{m^{2}}{j^{2}}$$
from Lemma (8.2) and then the equations (5.28-5.31) remain valid for all $d\in[0.5,1.25)$. Then for $d=0.5$,
\begin{eqnarray*}
\sqrt{\frac{N}{m}}\, \Big(IR_N(m)-\E \big [ IR_N(m)\big ]\Big)\limiteloiNm \mathcal{N}\big(0,\sigma^{2}(0.5)\big),
\end{eqnarray*}
with $\sigma^{2}(0.5)\simeq (0.2524)^{2}$.
\end{proof}

\begin{proof}[Proof of Property \ref{devEIR}]
As in Surgailis {\it et al} (2008), we can write:
\begin{eqnarray} \label{RmVm}
\nonumber \E \big [ IR_N(m) \big ]=\E\big(\frac{|Y^{0}+Y^{1}|}{|Y^{0}|+|Y^{1}|}\big)=\Lambda(\frac{R_{m}}{V_{m}^{2}}) \quad
\mbox{with}\quad
\frac{R_{m}}{V_{m}^{2}}:=1-2 \, \frac{\int_{0}^{\pi}f(x)\frac{\sin^{6}(\frac{mx}{2})}{\sin^{2}(\frac{x}{2})}dx~}{\int_{0}^{\pi}f(x)
\frac{\sin^{4}(\frac{mx}{2})}{\sin^{2}(\frac{x}{2})}dx}.
\end{eqnarray}
Therefore an expansion of $R_{m}/V_{m}^{2}$ provides an expansion of $\E \big [ IR_N(m) \big ]$ when $m\to \infty$.\\
~\\
{\bf Step 1} Let $f$ satisfy Assumption $IG(d,\beta)$. Then we are going to establish that there exist positive real numbers ${C}_{1}$, ${C}_{2}$ and ${C}_{3}$ specified in (\ref{Ctild1}), (\ref{Ctild2}) and (\ref{Ctild3}) such that for $0.5\leq d<1.5$ and with $\rho(d)$ defined in (\ref{DefinitionRhod}),
\begin{eqnarray*}
&1.& \quad \mbox{if $ \beta < 2d-1$,} \quad  \frac{R_{m}}{V_{m}^{2}}=\rho(d)+{C}_{1}(2-2d,\beta)m^{-\beta}+O\Big(m^{-2} +m^{-2\beta}\Big); \\
&2.&\quad \mbox{if $ \beta = 2d-1$,} \quad  \frac{R_{m}}{V_{m}^{2}}=\rho(d)+{C}_{2}(2-2d,\beta)m^{-\beta}+O\Big(m^{-2}+m^{-2-\beta}\log(m)+m^{-2\beta} \Big); \\
&3.&\quad \mbox{if $ 2d-1<\beta <2d+1$,} \quad  \frac{R_{m}}{V_{m}^{2}}=\rho(d)+{C}_{3}(2-2d,\beta)m^{-\beta}+O\Big(m^{-\beta-\epsilon}+m^{-2d-1}\log(m)+m^{-2\beta}\Big);\\
&4.&\quad\mbox{if $\beta =2d+1$,} \quad  \frac{R_{m}}{V_{m}^{2}}=\rho(d)+O\Big(m^{-2d-1}~\log(m)+m^{-2} \Big).
\end{eqnarray*}
Under Assumption $IG(d,\beta)$ and with $J_j(a,m)$ defined in (\ref{Jj}) in Lemma \ref{Jj}, it is clear that,
$$
\frac{R_{m}}{V_{m}^{2}}=1-2\, \frac{J_6(2-2d,m)+ \frac{c_{1}}{c_{0}}J_6(2-2d+\beta,m) +O(J_6(2-2d+\beta+\varepsilon))}
{J_4(2-2d,m)+ \frac{c_{1}}{c_{0}}J_4(2-2d+\beta,m) +O(J_4(2-2d+\beta+\varepsilon))},
$$
since $\displaystyle \int_{0}^{\pi}O(x^{2-2d+\beta+\varepsilon})\frac{\sin^{j}(\frac{mx}{2})}{\sin^{2}(\frac{x}{2})}dx=O(J_j(2-2d+\beta+\varepsilon))$. Now using the results of Lemma \ref{Jj} and constants ${C}_{j\ell}$, ${C}'_{j\ell}$ and ${C}''_{j\ell}$, $j=4,\, 6$, $\ell=1,2$ defined in Lemma \ref{Jj},\\
~\\
1. Let $0<\beta<2d-1<2$, {\it i.e.} $-1<2-2d+\beta<1$. Then
\begin{eqnarray*}
\frac{R_{m}}{V_{m}^{2}}& \hspace{-3mm}=&\hspace{-3mm} 1\hspace{-1mm} -\hspace{-1mm}2\, \frac{
{C}_{61}(2-2d)~m^{1+2d}\hspace{-1mm}+\hspace{-1mm}O\big(m^{2d-1}\big)+\hspace{-1mm}\frac{c_{1}}{c_{0}}{C}_{61}(2-2d+\beta)m^{1+2d-\beta}\hspace{-1mm}+\hspace{-1mm}O\big(m^{2d-1-\beta}\big)}
{{C}_{41}(2-2d)m^{1+2d}\hspace{-1mm}+\hspace{-1mm}O\big(m^{2d-1}\big)+\hspace{-1mm}\frac{c_{1}}{c_{0}} {C}_{41}(2-2d+\beta)m^{1+2d-\beta}\hspace{-1mm}+\hspace{-1mm}O\big(m^{2d-1-\beta}\big)}\\
&\hspace{-3mm}=&\hspace{-3mm}1\hspace{-1mm}-\hspace{-1mm}\frac{2}{{C}_{41}(2-2d)}\Big[{C}_{61}(2-2d)\hspace{-1mm}+\hspace{-1mm}\frac{c_{1}}{c_{0}}{C}_{61}(2-2d+\beta)m^{-\beta}\Big]\Big[1\hspace{-1mm}-\hspace{-1mm}\frac{c_{1}}{c_{0}}\frac{{C}_{41}(2-2d+\beta)}{{C}_{41}(2-2d)}m^{-\beta}\Big]\hspace{-1mm}+\hspace{-1mm}O\big(m^{-2}\big)\\
&\hspace{-3mm}=&\hspace{-3mm}1\hspace{-1mm}-\hspace{-1mm}\frac{2{C}_{61}(2-2d)}{{C}_{41}(2-2d)}\hspace{-1mm}+\hspace{-1mm}2\frac{c_{1}}{c_{0}}\Big[\frac{{C}_{61}(2-2d){C}_{41}(2-2d+\beta)}{{C}_{41}(2-2d){C}_{41}(2-2d)}\hspace{-1mm}-\hspace{-1mm}\frac{{C}_{61}(2-2d+\beta)}{{C}_{41}(2-2d)}\Big]m^{-\beta}\hspace{-1mm}+\hspace{-1mm}O\big(m^{-2}+m^{-2\beta}\big).
\end{eqnarray*}
As a consequence,,
\begin{multline}\label{Ctild1}
\frac{R_{m}}{V_{m}^{2}}=\rho(d)~+~{C}_{1}(2-2d,\beta)~~m^{-\beta}+~O\Big(m^{-2}+m^{-2\beta} \Big)\quad (m\to \infty),\quad \mbox{with $0<\beta<2d-1<2$ and} \\
{C}_{1}(2-2d,\beta):=2 \, \frac{c_{1}}{c_{0}}\frac 1 {{C}^2_{41}(2-2d)}\big[{C}_{61}(2-2d){C}_{41}(2-2d+\beta)-{C}_{61}(2-2d+\beta)
{C}_{41}(2-2d)\big],
\end{multline}
and numerical experiments proves that $ {C}_{1}(2-2d,\beta)/c_1$ is negative for any $d \in (0.5,1.5)$ and $\beta>0$. \\
~\\
2. Let $\beta=2d-1$, {\it i.e.} $2-2d+\beta=1$. Then,
\begin{eqnarray*}
\frac{R_{m}}{V_{m}^{2}}& \hspace{-3mm}=&\hspace{-3mm} 1\hspace{-1mm} -\hspace{-1mm}2\, \frac{
{C}_{61}(2-2d)~m^{1+2d}\hspace{-1mm}+\hspace{-1mm}O\big(m^{2d-1}\big)+\hspace{-1mm}\frac{c_{1}}{c_{0}}{C'}_{61}(1)m^{1-2d}\hspace{-1mm}+\hspace{-1mm}O\big(\log(m)\big)}
{{C}_{41}(2-2d)m^{1+2d}\hspace{-1mm}+\hspace{-1mm}O\big(m^{2d-1}\big)+\hspace{-1mm}\frac{c_{1}}{c_{0}} {C'}_{41}(1)m^{1-2d}\hspace{-1mm}+\hspace{-1mm}O\big(\log(m)\big)}\\
&\hspace{-3mm}=&\hspace{-3mm}1\hspace{-1mm}-\hspace{-1mm}\frac{2}{{C}_{41}(2-2d)}\Big[{C}_{61}(2-2d)\hspace{-1mm}+\hspace{-1mm}\frac{c_{1}}{c_{0}}{C'}_{61}(1)m^{1-2d}\Big]\Big[1\hspace{-1mm}-\hspace{-1mm}\frac{c_{1}}{c_{0}}\frac{{C'}_{41}(1)}{{C}_{41}(2-2d)}m^{1-2d}\Big]\hspace{-1mm}+\hspace{-1mm}O\big(m^{-2}+m^{-2d-1}\log(m)\big)\\
&\hspace{-3mm}=&\hspace{-3mm}1\hspace{-1mm}-\hspace{-1mm}\frac{2{C}_{61}(2-2d)}{{C}_{41}(2-2d)}\hspace{-1mm}+\hspace{-1mm}2\frac{c_{1}}{c_{0}}\Big[\frac{{C}_{61}(2-2d){C'}_{41}(1)}{{C}_{41}(2-2d){C}_{41}(2-2d)}\hspace{-1mm}-\hspace{-1mm}\frac{{C'}_{61}(1)}{{C}_{41}(2-2d)}\Big]m^{1-2d}\hspace{-1mm}+\hspace{-1mm}O\big(m^{-2}+m^{-2d-1}\log(m)+m^{2-4d}\big).
\end{eqnarray*}
As a consequence,
\begin{multline}\label{Ctild2}
\frac{R_{m}}{V_{m}^{2}}=\rho(d)~+~{C}_{2}(2-2d,\beta)~~m^{-\beta}+~O\Big(m^{-2}+m^{-2-\beta}\log(m)+m^{-2\beta} \Big)\quad (m\to \infty),\quad \mbox{with $0<\beta=2d-1<2$ and } \\
{C}_{2}(2-2d,\beta):=2 \, \frac{c_{1}}{c_{0}}\frac 1 {{C}^2_{41}(2-2d)}\big[{C}_{61}(2-2d){C'}_{41}(1)-{C'}_{61}(1)
{C}_{41}(2-2d)\big],
\end{multline}
and numerical experiments proves that $ {C}_{2}(2-2d,\beta)/c_1$ is negative for any $d \in [0.5,1.5)$ and $\beta>0$. \\\\
3.  Let $2d-1<\beta<2d+1$,  {\it i.e.} $1<2-2d+\beta<3$. Then,
\begin{eqnarray*}
\frac{R_{m}}{V_{m}^{2}}& \hspace{-3mm}=&\hspace{-3mm} 1\hspace{-1mm} -\hspace{-1mm}2\, \frac{
{C}_{61}(2-2d)m^{1+2d}\hspace{-1mm}+\hspace{-1mm}\frac{c_{1}}{c_{0}}{C'}_{61}(2-2d+\beta)m^{1+2d-\beta}\hspace{-1mm}+\hspace{-1mm}O\big(m^{1+2d-\beta-\epsilon}+\log(m)\big)}
{{C}_{41}(2-2d)m^{1+2d}\hspace{-1mm}+\hspace{-1mm}\frac{c_{1}}{c_{0}} {C'}_{41}(2-2d+\beta)m^{1+2d-\beta}\hspace{-1mm}+\hspace{-1mm}O\big(m^{1+2d-\beta-\epsilon}+m^{-2d-1}\log(m)\big)}\\
&\hspace{-3mm}=&\hspace{-3mm}1\hspace{-1mm}-\hspace{-1mm}\frac{2}{{C}_{41}(2-2d)}\Big[{C}_{61}(2-2d)\hspace{-1mm}+\hspace{-1mm}\frac{c_{1}}{c_{0}}{C'}_{61}(2-2d+\beta)m^{-\beta}\Big]\Big[1\hspace{-1mm}-\hspace{-1mm}\frac{c_{1}}{c_{0}}\frac{{C'}_{41}(2-2d+\beta)}{{C}_{41}(2-2d)}m^{-\beta}\Big]\hspace{-1mm}+\hspace{-1mm}O\big(m^{-\beta-\epsilon}+m^{-2d-1}\log(m)\big)\\
&\hspace{-3mm}=&\hspace{-3mm}1\hspace{-1mm}-\hspace{-1mm}\frac{2{C}_{61}(2-2d)}{{C}_{41}(2-2d)}\hspace{-1mm}+\hspace{-1mm}2\frac{c_{1}}{c_{0}}\Big[\frac{{C}_{61}(2-2d){C'}_{41}(2-2d+\beta)}{{C}_{41}(2-2d){C}_{41}(2-2d)}\hspace{-1mm}-\hspace{-1mm}\frac{{C'}_{61}(2-2d+\beta)}{{C}_{41}(2-2d)}\Big]m^{-\beta}\hspace{-1mm}+\hspace{-1mm}O\big(m^{-\beta-\epsilon}+m^{-2d-1}\log(m)\big).
\end{eqnarray*}
As a consequence,
\begin{multline}\label{Ctild3}
\frac{R_{m}}{V_{m}^{2}}=\rho(d)~+~{C}_{3}(2-2d,\beta)~~m^{-\beta}+~O\Big(m^{-\beta-\epsilon}+m^{-2d-1}\log(m)+m^{-2\beta} \Big)\quad (m\to \infty),\quad \mbox{and} \\
{C}_{3}(2-2d,\beta):=2 \, \frac{c_{1}}{c_{0}}\frac 1 {{C}^2_{41}(2-2d)}\big[{C}_{61}(2-2d){C'}_{41}(2-2d+\beta)-{C'}_{61}(2-2d+\beta)
{C}_{41}(2-2d)\big],
\end{multline}
and numerical experiments proves that $ {C}_{3}(2-2d,\beta)/c_1$ is negative for any $d \in [0.5,1.5)$ and $\beta>0$. \\
~\\
4. Let $\beta=2d+1$. Then,
Once again with Lemma \ref{Jj}:
\begin{eqnarray*}
\frac{R_{m}}{V_{m}^{2}}& \hspace{-3mm}=&\hspace{-3mm} 1\hspace{-1mm} -\hspace{-1mm}2\, \frac{
{C}_{61}(2-2d)~m^{1+2d}\hspace{-1mm}+\hspace{-1mm}O\big(m^{2d-1}\big)+\hspace{-1mm}\frac{c_{1}}{c_{0}}{C'}_{62}(3)\log(m)\hspace{-1mm}+\hspace{-1mm}O\big(1\big)}
{{C}_{41}(2-2d)m^{1+2d}\hspace{-1mm}+\hspace{-1mm}O\big(m^{2d-1}\big)+\hspace{-1mm}\frac{c_{1}}{c_{0}} {C'}_{42}(3)\log(m)\hspace{-1mm}+\hspace{-1mm}O\big(1\big)}\\
&\hspace{-3mm}=&\hspace{-3mm}1\hspace{-1mm}-\hspace{-1mm}\frac{2}{{C}_{41}(2-2d)}\Big[{C}_{61}(2-2d)\hspace{-1mm}+\hspace{-1mm}\frac{c_{1}}{c_{0}}{C'}_{62}(3)m^{-\beta}\log(m)\Big]\Big[1\hspace{-1mm}-\hspace{-1mm}\frac{c_{1}}{c_{0}}\frac{{C'}_{42}(3)}{{C}_{41}(2-2d)}m^{-\beta}\log(m)\Big]\hspace{-1mm}+\hspace{-1mm}O\big(m^{-2}+m^{-2d-1}\big)\\
&\hspace{-3mm}=&\hspace{-3mm}1\hspace{-1mm}-\hspace{-1mm}\frac{2{C}_{61}(2-2d)}{{C}_{41}(2-2d)}\hspace{-1mm}+\hspace{-1mm}2\frac{c_{1}}{c_{0}}\Big[\frac{{C}_{61}(2-2d){C'}_{42}(3)}{{C}_{41}(2-2d){C}_{41}(2-2d)}\hspace{-1mm}-\hspace{-1mm}\frac{{C'}_{62}(3)}{{C}_{41}(2-2d)}\Big]m^{-\beta}\log(m)\hspace{-1mm}+\hspace{-1mm}O\big(m^{-2}\big).
\end{eqnarray*}
As a consequence,
\begin{eqnarray}\label{C0C1}
\frac{R_{m}}{V_{m}^{2}}=\rho(d)~+~O\big(m^{-2d-1}~\log(m)+m^{-2} \big)\quad (m\to \infty),\quad \mbox{with $2<\beta=2d+1<4$.}
\end{eqnarray}
{\bf Step 2:} A Taylor expansion of $\Lambda(\cdot)$ around $\rho(d)$ provides:
\begin{eqnarray*}
\Lambda\Big(\frac{R_{m}}{V_{m}^{2}}\Big) \simeq \Lambda\big(\rho(d)\big)+\Big[\frac{\partial\Lambda}{\partial\rho}\Big](\rho(d))\Big(\frac{R_{m}}{V_{m}^{2}}-\rho(d)\Big)+\frac{1}{2} \, \Big[\frac{\partial^{2}\Lambda}{\partial\rho^{2}}\Big](\rho(d))\Big(\frac{R_{m}}{V_{m}^{2}}-\rho(d)\Big)^2.
\end{eqnarray*}
Note that numerical experiments show that $\displaystyle \Big[\frac{\partial\Lambda}{\partial \rho}\Big](\rho)>0.2$ for any $\rho\in (-1,1)$.
As a consequence, using the previous expansions of $R_{m}/V_{m}^{2}$ obtained in Step 1 and since $\E \big [IR_N(m)\big ]=\Lambda\big(R_{m}/V_{m}^{2}\big)$, then for all $0<\beta\leq 2$:
\begin{eqnarray*}
\E \big [IR_N(m)\big ]=\Lambda_{0}(d)+\left \{ \begin{array}{ll} c_1 \, {C'}_1(d,\beta)\,m^{-\beta}+O\big (m^{-2} +m^{-2\beta} \big )& \mbox{if}~~~\beta<2d-1 \\
 c_1 \, {C'}_2(d,\beta)\,m^{-\beta}+O\big (m^{-2}+m^{-2-\beta}\log m +m^{-2\beta} \big )& \mbox{if}~~~\beta=2d-1 \\
  c_1 \,{C'}_3(d,\beta)\,m^{-\beta}+O\big (m^{-\beta-\epsilon}+m^{-2d-1}\log m +m^{-2\beta} \big )& \mbox{if}~~~2d-1<\beta<2d+1 \\
O\big(m^{-2d-1}\log m+m^{-2}\big)& \mbox{if}~~~\beta=1+2d \\

\end{array}\right .
\end{eqnarray*}
with $C'_\ell(d,\beta)=\Big [\frac{\partial\Lambda}{\partial\rho}\Big](\rho(d)) \, C_\ell(2-2d,\beta)$ for $\ell=1,2,3$ and $ C_\ell$ defined in (\ref{Ctild1}), (\ref{Ctild2}) and (\ref{Ctild3}).
\end{proof}

\begin{proof}[Proof of Theorem \ref{cltnada}] Using Property \ref{devEIR}, if $m \simeq C\, N^\alpha$ with $C>0$ and $(1+2\beta)^{-1}  <\alpha<1$ then $\sqrt{N/m}\, \big (\E \big [IR_N(m)\big ] -\Lambda_{0}(d)\big ) \limiteN 0$ and it implies that the multidimensional CLT (\ref{TLC1}) can be replaced by
\begin{eqnarray}\label{TLC1bis}
\sqrt{\frac{N}{m}}\Big (IR_N(m_j)-\Lambda_{0}(d)\Big )_{1\leq j \leq p}\limiteloiN {\cal N}(0, \Gamma_p(d)).
\end{eqnarray}
It remains to apply the Delta-method with the function $\Lambda_0^{-1}$ to CLT (\ref{TLC1bis}). This is possible since the function $d \to \Lambda_0(d)$ is an increasing function such that $\Lambda_0'(d)>0$ and $\big (\Lambda_0^{-1})'(\Lambda_0(d))=1/\Lambda'_0(d)>0$ for all $d\in (-0.5,1.5)$. It achieves the proof of Theorem \ref{cltnada}.
\end{proof}

\begin{proof}[Proof of Proposition \ref{hatalpha}]
See Bardet and Dola (2012).
\end{proof}

\begin{proof}[Proof of Theorem \ref{tildeD}]
See Bardet and Dola (2012). \end{proof}

\section*{Appendix}
We first recall usual equalities frequently used in the sequel:
\begin{lemma}\label{Taqqup31}
For all $\lambda >0$
\begin{enumerate}
\item For $a\in(0,2 )$, $\displaystyle\frac{2}{|\lambda|^{a-1}}\int_{0}^{\infty}\frac{\sin(\lambda x)}{x^{a}}dx=\frac{4a}{2^{a}|\lambda|^{a}}\int_{0}^{\infty}\frac{\sin^{2}(\lambda x)}{x^{a+1}}dx=\frac{~\pi~}{\Gamma(a)\sin(\frac{a \pi}{2})}$;
\item For $b\in(-1,1 )$, $\displaystyle \frac{1}{2^{1-b}-1}\int_{0}^{\infty}\frac{\sin^{4}(\lambda x)}{x^{4-b}}dx=\frac{16}{-15+6\cdot2^{3-b}-3^{3-b}}\times \int_{0}^{\infty}\frac{\sin^{6}(\lambda x)}{x^{4-b}}dx=\frac{~2^{3-b}|\lambda|^{3-b}~\pi~}{4~\Gamma(4-b)\sin(\frac{(1-b) \pi}{2})}$;
\item For $b\in(1,3 )$, $\displaystyle \frac{1}{1-2^{1-b}}\int_{0}^{\infty}\frac{\sin^{4}(\lambda x)}{x^{4-b}}dx=\frac{16}{15-6\cdot2^{3-b}+3^{3-b}}\times \int_{0}^{\infty}\frac{\sin^{6}(\lambda x)}{x^{4-b}}dx=\frac{~2^{3-b}|\lambda|^{3-b}~\pi~}{4~\Gamma(4-b)\sin(\frac{(3-b) \pi}{2})}$.
    \end{enumerate}
\end{lemma}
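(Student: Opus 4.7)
The plan is to derive all three identities from the single Mellin-type evaluation $\int_0^\infty u^{-a}\sin u\,du=\pi/(2\Gamma(a)\sin(a\pi/2))$ (valid for $a\in(0,2)$), combined with the scaling $u=\lambda x$ and a chain of integrations by parts to handle the higher powers of $\sin$. The scaling turns $\int_0^\infty \sin(\lambda x)\,x^{-a}\,dx$ into $\lambda^{a-1}$ times the base integral, producing the rightmost equality in (1) at once. For the $\sin^2$ piece I would do a single integration by parts with $v=\sin^2(\lambda x)$ and $dw=x^{-a-1}\,dx$, using $(\sin^2(\lambda x))'=\lambda\sin(2\lambda x)$; the boundary bracket $-\sin^2(\lambda x)/(a x^a)$ vanishes at both endpoints for $a\in(0,2)$, so the integral reduces to $(\lambda/a)\int_0^\infty \sin(2\lambda x)\,x^{-a}\,dx$, and a second application of the scaled base formula closes (1).

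For (2), with $b\in(-1,1)$ and $4-b\in(3,5)$, I would perform three successive IBPs on $\int_0^\infty \sin^4(\lambda x)/x^{4-b}\,dx$ using the derivative chain $(\sin^4 u)'=\sin 2u-\tfrac{1}{2}\sin 4u$, $(\sin^4 u)''=2\cos 2u-2\cos 4u$, $(\sin^4 u)'''=-4\sin 2u+8\sin 4u$. The vanishing orders near $0$ (namely $\sin^4 u\sim u^4$, $(\sin^4 u)'\sim 4u^3$, $(\sin^4 u)''\sim 12u^2$) together with the decay at infinity kill every boundary term on this range of $b$. The terminal integral $\int_0^\infty(-4\sin 2u+8\sin 4u)\,u^{b-1}\,du$ is then evaluated by (1) with $a=1-b\in(0,2)$; the accumulated prefactor $(3-b)(2-b)(1-b)\Gamma(1-b)=\Gamma(4-b)$ appears in the denominator, while the trigonometric coefficients collapse as $-4\cdot 2^{-b}+8\cdot 4^{-b}=4\cdot 2^{-b}(2^{1-b}-1)$, giving the right-hand side of (2) after dividing by $2^{1-b}-1$. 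The $\sin^6$ integral is handled identically with one extra IBP, using $(\sin^6 u)'=\tfrac{3}{16}(5\sin 2u-4\sin 4u+\sin 6u)$; the resulting coefficient factors as $-20\cdot 2^{-b}+64\cdot 4^{-b}-36\cdot 6^{-b}=\tfrac{4}{3}\cdot 2^{-b}(-15+6\cdot 2^{3-b}-3^{3-b})$, producing exactly the ratio of the $\sin^6$ and $\sin^4$ integrals asserted in (2).

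For (3), with $b\in(1,3)$, the outer integrals $\int \sin^4/x^{4-b}$ and $\int \sin^6/x^{4-b}$ still converge, but some of the intermediate integrals appearing after the IBP chain no longer do; I would therefore conclude (3) by analytic continuation in $b$ of the rational identity established for $b\in(-1,1)$ in (2), noting that at the apparent singularity $b=1$ both $2^{1-b}-1$ and $\sin((1-b)\pi/2)$ vanish simultaneously so the ratio extends smoothly across. The sign identities $\sin((1-b)\pi/2)=-\sin((3-b)\pi/2)$ and $2^{1-b}-1=-(1-2^{1-b})$ then rewrite the result in the positive-coefficient form stated. The main obstacle is purely algebraic bookkeeping: propagating the trigonometric coefficients through the iterated IBPs and verifying that they collapse to the compact expressions $2^{1-b}-1$ and $-15+6\cdot 2^{3-b}-3^{3-b}$; no genuine analytic difficulty arises beyond checking that each boundary bracket vanishes, which is guaranteed throughout the stated ranges by the vanishing order of the relevant trigonometric combination at zero.
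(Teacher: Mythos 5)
Your proposal is correct. The paper itself gives essentially no argument: its ``proof'' is a citation to p.~31 of Doukhan \emph{et al.} (2003) together with the remark that the lemma follows by decomposing $\sin^j(\cdot)$ and integrating by parts. You carry out that program in a self-contained way, reducing everything to the single classical evaluation $\int_0^\infty u^{-a}\sin u\,du=\pi/\big(2\Gamma(a)\sin(a\pi/2)\big)$ for $a\in(0,2)$ via scaling, the multiple-angle linearizations of $\sin^4$ and $\sin^6$, and a chain of integrations by parts with vanishing boundary terms; the bookkeeping checks out, in particular $(3-b)(2-b)(1-b)\Gamma(1-b)=\Gamma(4-b)$, $-4\cdot2^{-b}+8\cdot4^{-b}=4\cdot2^{-b}(2^{1-b}-1)$ and $-20\cdot2^{-b}+64\cdot4^{-b}-36\cdot6^{-b}=\tfrac43\,2^{-b}(-15+6\cdot2^{3-b}-3^{3-b})$, which yield exactly the stated constants after the scaling factor $\lambda^{3-b}$. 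Where you genuinely differ is case (3): rather than relying on formulas valid directly for exponents in $(1,3)$, you obtain it from case (2) by analytic continuation in $b$ across the removable singularity at $b=1$, using $\sin((3-b)\pi/2)=-\sin((1-b)\pi/2)$ and $1-2^{1-b}=-(2^{1-b}-1)$; this is legitimate, and the one point worth making explicit is that $b\mapsto\int_0^\infty\sin^j(u)\,u^{b-4}du$ is analytic on the strip $-1<\Re b<3$ (absolute and locally uniform convergence), so the identity theorem applies. Two cosmetic remarks: the $\sin^6$ integral needs the \emph{same} three integrations by parts as $\sin^4$ (stopping at the third derivative), not ``one extra'' --- the coefficients $-20,64,-36$ you use are precisely those of $(\sin^6u)'''$, so your computation is the right one, and a fourth integration by parts would in fact be inadmissible for $b\in[0,1)$ because the boundary term at infinity would not vanish; also, the terminal integrals $\int_0^\infty\sin(ku)\,u^{b-1}du$ are only conditionally convergent, so the boundary-term limits should be taken on $[\varepsilon,M]$, as your argument implicitly does.
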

\begin{proof}
These equations are given or deduced (using decompositions of $\sin^j(\cdot)$ and integration by parts) from (see Doukhan {\it et al.},~p. 31).\\
\end{proof}
\begin{lemma}\label{lemma46}
For $j=4,6$, denote
\begin{equation}\label{Jjdef}
J_j(a,m):=\int_{0}^{\pi}x^{a}\frac{\sin^{j}(\frac{mx}{2})}{\sin^{4}(\frac{x}{2})}dx.
\end{equation}
Then, we have the following expansions when $m \to \infty$:
\begin{eqnarray}\label{Jj}
 J_{j}(a,m)=\left \{ \begin{array}{ll}
                      {C}_{j1}(a)\,m^{3-a}+O\big(m^{1-a}\big) & \mbox{if $-1<a<1$}\\
{C}'_{j1}(1)\,m^{3-a}+O\big (\log(m)\big) & \mbox{if $a=1$} \\
{C}'_{j1}(a)\,m^{3-a}+O\big (1\big)& \mbox{if $1<a<3$} \\
{C}'_{j2}(3)\,\log(m)+O\big (1\big)& \mbox{if $a=3$} \\
{C''}_{j1}(a)+O\big(m^{-((a-3)\wedge2)})& \mbox{if $a>3$}
                     \end{array}
\right .
\end{eqnarray}
with the following real constants (which do not vanish for any $a$ on the corresponding set):
\begin{eqnarray*}\label{AllConstantTild}
&\bullet & {C}_{41}(a):=\frac{4~~\pi(1-\frac{2^{3-a}}{4})}{(3-a)\Gamma(3-a)\sin(\frac{(3-a)\pi}{2})}\quad  \mbox{and}\quad {C}_{61}(a):=\frac{\pi(15-6\cdot2^{3-a}~+3^{3-a})}{4(3-a)\Gamma(3-a)\sin(\frac{(3-a)\pi}{2})}\\
&\bullet & {C}'_{41}(a):=\Big(\frac{6}{3-a}\textbf{1}_{\{1\leq a<3\}}+16\int_{0}^{1}\frac{\sin^{4}(\frac{y}{2})}{y^{4-a}}dy+2\int_{1}^{\infty}\frac{1}{y^{4-a}}\Big(-4\cos(y)+\cos(2y)\Big)dy\Big)\\
&& \mbox{and} \quad {C}'_{61}(a):=\Big[16\int_{0}^{1}\frac{\sin^{6}(\frac{y}{2})}{y^{4-a}}dy+\frac{5}{3-a}\textbf{1}_{\{1\leq a<3\}}+\frac{1}{2}\int_{1}^{\infty}\frac{1}{y^{4-a}}\Big(-15\cos(y)+6\cos(2y)-\cos(3y)\Big)dy\Big]\\
&\bullet& {C}'_{42}(a):=\Big(6\cdot\textbf{1}_{\{a=3\}}+\textbf{1}_{\{a=1\}}\Big)\quad  \mbox{and}\quad
{C}'_{62}(a):=\Big(5\cdot\textbf{1}_{\{a=3\}}+\frac{5}{6}\cdot\textbf{1}_{\{a=1\}}\Big)\\
&\bullet& {C}''_{41}(a):=\frac{3}{8}\int_{0}^{\pi}\frac{x^{a}}{\sin^{4}(\frac{x}{2})}dx\quad  \mbox{and}\quad
{C}''_{61}(a):=\frac{5}{16}\int_{0}^{\pi}\frac{x^{a}}{\sin^{4}(\frac{x}{2})}dx.
\end{eqnarray*}
\end{lemma}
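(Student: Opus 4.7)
The strategy is to expand $1/\sin^4(x/2)$ in a Laurent series at the origin and then rescale by $y = mx$. Using $\sin(x/2) = (x/2)(1 - x^2/24 + O(x^4))$ one obtains
\[
\frac{1}{\sin^{4}(x/2)} = \frac{16}{x^{4}} + \frac{8/3}{x^{2}} + \phi(x), \qquad x \in (0, \pi],
\]
with $\phi$ extending to a $C^{\infty}$ function on $[0, \pi]$. Substituting into \eqref{Jjdef} and changing variable $y = mx$ yields
\[
J_{j}(a, m) = 16\, m^{3-a}\, \widetilde{A}_{j}(a,m) + \tfrac{8}{3}\, m^{1-a}\, \widetilde{B}_{j}(a,m) + m^{-a-1}\, \widetilde{C}_{j}(a,m),
\]
where $\widetilde{A}_{j}(a, m) := \int_{0}^{m\pi} y^{a-4} \sin^{j}(y/2)\, dy$, $\widetilde{B}_{j}(a, m) := \int_{0}^{m\pi} y^{a-2} \sin^{j}(y/2)\, dy$ and $\widetilde{C}_{j}(a, m) := \int_{0}^{m\pi} y^{a}\, \phi(y/m)\, \sin^{j}(y/2)\, dy$. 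The three pieces are then analyzed separately.

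The leading $m^{3-a}$ behaviour lives in $\widetilde{A}_{j}$. Writing $\sin^{j}(y/2) = c_{0}^{(j)} + \sum_{k=1}^{j/2} c_{k}^{(j)} \cos(ky)$ with $c_{0}^{(4)} = 3/8$ and $c_{0}^{(6)} = 5/16$, I split $\int_{0}^{m\pi} = \int_{0}^{1} + \int_{1}^{m\pi}$. On $[0, 1]$ the integrand $\sin^{j}(y/2)/y^{4-a} \sim (y/2)^{j}/y^{4-a}$ is harmless for $a > -1$. On $[1, m\pi]$, the constant-term contribution $c_{0}^{(j)} \int_{1}^{m\pi} y^{a-4}\, dy \to c_{0}^{(j)}/(3-a)$ as $m \to \infty$ (for $a < 3$), producing, after multiplication by $16$, exactly the terms $6/(3-a)$ and $5/(3-a)$ that appear in $C'_{j1}(a)$; the oscillatory parts $c_{k}^{(j)} \int_{1}^{m\pi} y^{a-4} \cos(ky)\, dy$ converge to finite values by Dirichlet. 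Putting everything together, $16\, \widetilde{A}_{j}(a, m)$ converges to $C_{j1}(a)$ on $(-1, 1)$ (identified via Lemma \ref{Taqqup31}) and to $C'_{j1}(a)$ on $[1, 3)$, with a rate of convergence $O(m^{a-3})$ that is absorbed into the error terms stated in the lemma.

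The sub-leading contributions come from $\widetilde{B}_{j}$. For $-1 < a < 1$ the integrand is integrable on $[0, \infty)$, so $\widetilde{B}_{j} = O(1)$ and $\tfrac{8}{3} m^{1-a} \widetilde{B}_{j} = O(m^{1-a})$. For $a = 1$, $\widetilde{B}_{j}(1, m) = \int_{0}^{m\pi} y^{-1} \sin^{j}(y/2)\, dy = c_{0}^{(j)} \log m + O(1)$, giving an $O(\log m)$ contribution with coefficient $(8/3)\, c_{0}^{(j)}$, i.e.\ $C'_{j2}(1) = 1$ for $j = 4$ and $5/6$ for $j = 6$. For $1 < a < 3$, the integral grows like $m^{a-1}$ so $\tfrac{8}{3} m^{1-a} \widetilde{B}_{j} = O(1)$. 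The boundary case $a = 3$ is symmetric, but the logarithmic divergence now sits in $\widetilde{A}_{j}$: $\widetilde{A}_{j}(3, m) = c_{0}^{(j)} \log m + O(1)$, and $16\, \widetilde{A}_{j}(3, m) = 16\, c_{0}^{(j)} \log m + O(1)$ yields $C'_{j2}(3) = 6$ for $j = 4$ and $5$ for $j = 6$. The remainder $m^{-a-1} \widetilde{C}_{j}$ is $O(1)$ throughout, by the boundedness of $\phi$ on $[0, \pi]$ together with a Riemann--Lebesgue cancellation applied to the oscillatory part of $\sin^{j}(y/2)$.

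Finally, for $a > 3$ the function $x^{a}/\sin^{4}(x/2)$ is in $L^{1}([0, \pi])$. Expanding $\sin^{j}(mx/2) = c_{0}^{(j)} + \sum_{k=1}^{j/2} c_{k}^{(j)} \cos(kmx)$ isolates the constant contribution $C''_{j1}(a) = c_{0}^{(j)} \int_{0}^{\pi} x^{a}/\sin^{4}(x/2)\, dx$. Each oscillatory integral $\int_{0}^{\pi} (x^{a}/\sin^{4}(x/2)) \cos(kmx)\, dx$ is controlled by integration by parts: the boundary term at $x = \pi$ vanishes since $m \in \N^{*}$; a single IBP gives $O(m^{-1})$ once $a > 4$ (the derivative $\sim 16 (a-4) x^{a-5}$ being integrable at $0$), whereas for $3 < a \leq 4$ a direct rescaling $y = kmx$ yields only $O(m^{3-a})$, and a second IBP sharpens the rate to $O(m^{-2})$ as soon as $a \geq 5$; altogether this produces the claimed error $O(m^{-((a-3) \wedge 2)})$. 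The main obstacle is not conceptual but bookkeeping: one must verify that the leading constants emerging from the decomposition above indeed coincide with the explicit closed forms $C_{j1}, C'_{j1}, C''_{j1}$ listed in the lemma, which is achieved by direct evaluation for $C''_{j1}$, by the $y \to 1$ splitting described above for $C'_{j1}$, and by appealing to Lemma \ref{Taqqup31} with $\lambda = 1$ for $C_{j1}$.
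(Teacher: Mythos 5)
Your plan is correct and follows essentially the same route as the paper's (much terser) proof: the same splitting $1/\sin^{4}(x/2)=16x^{-4}+\tfrac{8}{3}x^{-2}+\phi(x)$ with a smooth remainder, evaluation of the two singular pieces after rescaling via Lemma \ref{Taqqup31} and the linearizations of $\sin^{4}$ and $\sin^{6}$, and a separate $O(1)$ bound for the remainder term. The only detail to tighten is the range $4<a<5$, where your single integration by parts yields only $O(m^{-1})$ rather than the stated $O(m^{3-a})$; performing the integration by parts and then rescaling the resulting integral $\int_{0}^{\pi}x^{a-5}\sin(kmx)\,dx$ recovers the claimed rate, so this is a one-line repair rather than a gap.
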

\begin{proof} The proof of these expansions follows the steps than those of Lemma 5.1 in Bardet and Dola (2012). Hence we write for $j=4,6$,
\begin{eqnarray}\label{Jjam}
J_j(a,m)&=&\widetilde{J}_{j}(a,m)+ \int_{0}^{\pi}x^{a}\sin^{j}(\frac{mx}{2})\frac{1}{(\frac{x}{2})^{4}}dx+ \int_{0}^{\pi}x^{a}\sin^{j}(\frac{mx}{2})\frac{2}{3}\frac{1}{(\frac{x}{2})^{2}}dx
\end{eqnarray}
with
\begin{eqnarray*}
\widetilde{J}_{j}(a,m)&:=&\int_{0}^{\pi}x^{a}\sin^{j}(\frac{mx}{2})\big(\frac{1}{\sin^{4}(\frac{x}{2})}-\frac{1}{(\frac{x}{2})^{4}}-\frac{2}{3}\frac{1}{(\frac{x}{2})^{2}}\big)dx.
\end{eqnarray*}
The expansions when $m\to \infty$ of both the right hand sided integrals in (\ref{Jjam}) are obtained from Lemma \ref{Taqqup31}. It remains to obtain the expansion of $\widetilde{J}_{j}(a,m)$. Then, using classical trigonometric and Taylor expansions:
\begin{eqnarray*}
\sin^{4}(\frac{y}{2})&=& \frac 1 8 \big ( 3-4\cos(y)+\cos(2y) \big )\quad \mbox{and}\quad \frac{1}{\sin^{4}(y)}-\frac{1}{y^{4}}-\frac{2}{3}\frac{1}{y^{2}}\sim \frac{11}{45}\quad (y \to 0)\\
\sin^{6}(\frac{y}{2})&=&\frac 1 {32} \big (10-15\cos(y)+6\cos(2y)-\cos(3y) \big )\quad \mbox{and}\quad \frac{1}{y^{5}}+\frac{1}{3}\frac{1}{y^{3}}-\frac{\cos(y)}{\sin^{5}(y)}\sim \frac{31}{945}\, y\quad (y \to 0),
\end{eqnarray*}
the expansions of $\widetilde{J}_{j}(a,m)$ can be obtained.

Numerical experiments show that ${C}''_{41}(a)\neq0$, ${C}''_{61}(a)\neq0$, ${C}''_{42}(a)\neq0$ and ${C}''_{62}(a)\neq0$.
\end{proof}

\bibliographystyle{amsalpha}

\end{document}